\definecolor{violet}{rgb}{0.6,0.4,0.8}
\theoremstyle{plain}
\newtheorem{theorem}{Theorem}[section]
\newtheorem{lemma}[theorem]{Lemma}
\newtheorem{proposition}[theorem]{Proposition}
\theoremstyle{definition}
\newtheorem{assumption}{Assumption}
\theoremstyle{remark}
\numberwithin{equation}{section}
\newcommand{\nsided}{{\zeta}}
\renewcommand{\P}{\mathbb{P}}
\newcommand{\R}{\mathbb{R}}
\newcommand{\E}{\mathbb{E}}
\newcommand{\N}{\mathbb{N}}
\newcommand{\Z}{\mathbb{Z}}
\newcommand{\eps}{\varepsilon}
\newcommand{\ind}[1]{{\mathbf{1}}_{\left[ {#1} \right] }}
\newcommand{\ep}{\varepsilon}
\newcommand{\xbar}{\bar{x}}
\newcommand{\xmu}{x^\mu}
\newcommand{\xtep}{\tilde x^\mu}
\newcommand{\Tinit}{T^{-}}
\newcommand{\Tswap}{ T^{s}}
\newcommand\eee{\mathrm{e}}
\newcommand\Ninterval[2]{\llbracket #1,#2\rrbracket}
\numberwithin{equation}{section}
\def\be{\begin{equation*}}
\def\ee{\end{equation*}}
\def\best{\begin{equation*}}
\def\eest{\end{equation*}}
\begin{document}

\title[Crossing a fitness valley]{Crossing a fitness valley as a metastable transition\\ in a stochastic population model}

\author{Anton Bovier}
\address{Institut f\"ur Angewandte Mathematik, Rheinische Friedrich-Wilhelms-Universit\"at Bonn, Endenicher Allee 60, 
53115 Bonn, Germany.}
\email{bovier@uni-bonn.de}

\author{Loren Coquille}
\address{
	Univ. Grenoble Alpes, CNRS, Institut Fourier, F-38000 Grenoble, France}
\email{loren.coquille@univ-grenoble-alpes.fr}

\author{Charline Smadi}
\address{Irstea, UR LISC, Laboratoire d'Ing\'enierie pour les Syst\`emes Complexes, 9 avenue Blaise Pascal-CS 20085, 63178 Aubi\`ere, France 
and Complex Systems Institute of Paris Ile-de-France, 113 rue Nationale, Paris, France}
\email{charline.smadi@irstea.fr}

 \keywords{ Eco-evolution; Birth and death process with immigration; Selective sweep; Coupling; competitive Lotka-Volterra system with mutations}
 \subjclass[2010]{92D25, 60J80, 60J27, 92D15, 60F15, 37N25.}

\thanks{A.B. acknowledges financial support from the German Research Foundation (DFG) 
	through the  Clusters of Excellence  \emph{Hausdorff Center for  Mathematics} and 
	\emph{ImmunoSensation},
	the Priority Programme SPP 1590 \emph{Probabilistic Structures in Evolution},
	and the Collaborative Research Center CRC 1060 \emph{The Mathematics of Emergent Effects}. L.C. has been 
	partially supported by the LabEx PERSYVAL-Lab (ANR-11-LABX-0025-01) through the 
	Exploratory Project \textit{CanDyPop} and
	by the Swiss National Science Foundation through  the grant  No.  P300P2\_161031.
	This work was also partially funded by the Chair \emph{Mod\'elisation
		Math\'ematique et Biodiversit\'e} of VEOLIA-Ecole Polytechnique-MNHN-F.X}

\maketitle

\begin{abstract}
	We consider a stochastic model of population dynamics where each individual is characterised by a trait in $\{0,1,...,L\}$ and has a natural reproduction rate, a logistic death rate due to age or competition, and a probability of mutation towards neighbouring traits at each reproduction event. 
We choose parameters such that the induced fitness landscape exhibits a valley: mutant individuals with negative fitness have to be created in order for the population to reach a trait with positive fitness. We focus on the limit of large population and rare mutations at several speeds. In particular, when the mutation rate is low enough, metastability occurs: the exit time of the valley is an  exponentially distributed random variable.
\end{abstract}

\section{Introduction}

The biological theory of \emph{adaptive dynamics} aims at studying the interplay between ecology and evolution through the modeling  of three basic mechanisms: 
heredity, mutations, and {competition}. It was first developed in the 1990ies, partly heuristically, by {Metz, Geritz, Bolker, Pacala, Dieckmann, Law, and coauthors
	\cite {metz1995adaptive,DL96,geritz1997dynamics,BolPac1,BolPac2,DieLaw}.}

A rigorous derivation of the theory was achieved over the last decade in the context of stochastic individual-based models,
where the evolution of a population
of individuals characterised by their phenotypes under the influence of the
evolutionary mechanisms of birth, death, mutation, and ecological 
competition in an inhomogeneous "fitness landscape" is described as a measure valued Markov process.
Using various scaling limits involving large population size, small mutation rates, and 
small mutation steps, key features described in the biological theory of {adaptive dynamics}, in particular the \emph{canonical equation of adaptive dynamics (CEAD)}, the 
\emph{trait substitution sequence (TSS)}, and the \emph{polymorphic evolution sequence (PES)} were 
recovered,  
see \cite{C_CEAD,champagnat2006microscopic,FM04,C_ME,CM11,B14}. {Extensions} of those results
for more structured populations were investigated, for example, in \cite{tran_2008,leman2016convergence}.

Contrarily to the population genetics approach, individual-based models of adaptive dynamics take into account varying population sizes as well as stochasticity, which is necessary if we
aim at better understanding of phenomena involving small populations, such as mutational
meltdown \cite{coron2013quantifying}, invasion of a mutant population \cite{champagnat2006microscopic},
evolutionary suicide and rescue \cite{abu2017double}, population extinction time \cite{chazottes2016sharp,Coron2017},
or recovery phenomena \cite{baar2016stochastic,bovier2017recovery}.

The emerging picture allows to give the 
following description of the evolutionary fate of a population starting in a monomorphic initial state:
first, on a fast ecological time scale, the population reaches its ecological equilibrium. 
Second, if mutations to types of positive \emph{invasion fitness} (the invasion fitness is the average growth rate of an individual born with this trait in the 
presence of the current equilibrium population) are possible, these 
eventually happen and the population is substituted by a fitter type once a mutant trait fixates (if coexistence is not possible). 
This continues, and the monomorphic population moves according to the  TSS 
(resp. the CEAD, if mutations steps are scaled to zero) until an 
\emph{evolutionary singularity} is reached: here two types of singularities are possible: either, the 
singularity is stable, in the sense that no {further} type with positive invasion fitness can be reached,
or there are several directions with equal positive fitness that can be taken. In the latter case
the population splits into two or more sub-populations of different types which then 
continue to move on until again an evolutionary singularity is reached. If the mutation {probability} is small enough, all this happens on a time scale
of order $1/(\mu K)$, where $\mu$ is the mutation {probability} and $K$ is the carrying capacity, 
which is a measure of the maximal population size that the environment can sustain for a long time.
This process goes on until all sub-populations are located in stable evolutionary singularities. At this 
stage, no single mutation can lead to a trait with positive invasion fitness. Nonetheless, there may be
traits with positive invasion fitness that can be reached through \emph{several} consecutive
mutation steps \cite{lenski2003evolutionary,cowperthwaite2006bad}.
{Our} purpose is to present a precise analysis of how such an escape from 
a stable singularity  happens in various scaling regimes.

As we will show, three essentially different dynamics may occur. In the first one,  the mutation probability is so  
large that  many mutants (a number of order $\mu K$) are created in 
a time of order $1$. In this case the fixation time scale is dominated by the time needed for a successful mutant to invade (which is of order {$\log 1/\mu$}).
The second scenario occurs if the mutation {probability} is smaller,  but large enough so that a fit mutant will appear before the resident population dies out. In this case the fixation time scale is
exponentially distributed and
dominated by the time needed 
for the first successful mutant to be born. The last possible scenario is the extinction of the population before the fixation of the fit mutant, 
which occurs when the mutation {probability} is very small (smaller than $\eee^{-CK}$ for a constant $C$ to be made precise later).\\

{In the sequel, we denote by $\N$ the set of integers $\{1,2,3,...\}$, by $\N_0$ the set $\N \cup \{0\}$, 
	{and by $\R_+=\{x\in\R:x\geq0\}$} {the set of non negative real numbers}.
	For $n,m\in\N_0$ such that $n\leq m$, we also introduce the notation $\llbracket n,m\rrbracket:=\{n,n+1,\ldots,m\}$. }

\section{Model}

In this paper we analyse the escape problem in a specific simple model situation that, however,
captures the key {mechanisms.}
We consider a finite trait  space ${\llbracket 0,L\rrbracket}$ on which the population evolves.
To each trait $i\in {\llbracket 0,L\rrbracket}$, we assign 

\begin{itemize}
	\item a \emph{clonal birth rate}: $(1-\mu)b_i\geq 0$, where $0\leq \mu\leq 1$ is the mutation {probability};
	\item  a \emph{natural death rate}: $d_i\geq 0$.
\end{itemize}
An individual can also die from type-dependent competition.
We assign to each pair $(i,j)\in {\llbracket 0,L\rrbracket}^2$
\begin{itemize}
	\item  a \emph{competition kernel}: $c_{ij}\geq 0$,  
	where $c_{ii},c_{i0},c_{iL}>0$, for all $i\in {\llbracket 0,L\rrbracket}$.
\end{itemize}
To be able to scale the effective size of a population, the competition kernel is scaled 
down by the so-called \emph{carrying capacity}, $K$, that is,  the competitive pressure exerted by an individual of type $j$ on an individual 
of type $i$ is $c_{ij}/K$. 
Finally, to represent mutations, we assign to each pair $(i,j)\in {\llbracket 0,L\rrbracket}^2$
\begin{itemize}
	\item  a \emph{mutation kernel}: ${(m_{ij})}_{(i,j)\in{\llbracket 0,L\rrbracket}^2}$ satisfying $m_{ij}\in[0,1]$, for all 
	$(i,j)\in{\llbracket 0,L\rrbracket}^2$ and $\sum_{j\in{\llbracket 0,L\rrbracket}}m_{ij}=1$. We will focus on two cases:
	\begin{equation}\label{mut-kernel}
		{m^{(1)}_{ij}=\delta_{i+1,j} \quad\text{ or }\quad m^{(2)}_{ij}=\frac{1}{2}(\delta_{i+1,j}+\delta_{i-1,j}),}
	\end{equation}
	where $\delta_{i,j}$ is the Kronecker delta ($1$ if $i=j$, $0$ otherwise).
\end{itemize}
{We denote the stochastic process with the above mechanisms by $X$.}
The state of a population is  an element of ${\N_0}^{L+1}$.
As we will see, {before the population extinction, which is of an exponential order (see Section \ref{res_ext}),} 
the total population size has the same order as the carrying capacity {$K$.} 
Hence it will be more convenient to study the 
rescaled process $X^K=(X_0^K(t),\ldots,X_L^K(t))=X/K$ and to think of this as an element of
$\R^{L+1}$. 
Let $\mathrm{e}_i$ denote the $i$-th 
unit vector in $\R^{L+1}$. 
The generator of $X^K$ acts on bounded measurable
functions $f:\R_+^{L+1}\to \R$, for all $X^K\in({\N_0}/K)^{L+1}$, as 
\begin{align}\label{genscaled}\nonumber
	(L^{(K)}f)(X^K)=&(1-\mu)K\sum_{i=0}^L 
	(f(X^K+\mathrm{e}_i/K)-f(X^K)) b_iX^K_i \\
	&+K \sum_{i=0}^L  (f(X^K-\mathrm{e}_i/K)-f(X^K))(d_i+\sum_{j=0}^L {c_{ij}}X^K_j)X^K_i \nonumber	\\
	&+\mu K \sum_{i=0}^L \sum_{j=0}^L(f(X^K+\mathrm{e}_j/K)-f(X^K)) b_i m_{ij} X^K_i.\end{align}

A  key result, due to Ethier and Kurtz \cite{EthKur1986}, is the law of large {numbers} when $K\uparrow
\infty$ (for fixed $\mu$ and fixed time intervals), which we recall now.

\begin{proposition}[\cite{EthKur1986}, Chapter 11, Thm 2.1]
	Suppose that the initial conditions converge in probability to a deterministic
	limit, i.e. $\lim_{K\to\infty} X^K(0)=x(0)$. Then, for each $T\in \R_+$,
	the rescaled process $(X^K(t), 0\leq t\leq T)$
	converges in probability, as
	$K\to\infty$, 
	to the deterministic process $x^\mu=(x_0^\mu,\ldots,x_L^\mu)$ which is the
	unique solution to the following  dynamical system: 
	\begin{align}\label{cde-gen}
		\frac{d
			x_i^\mu}{dt}=\left((1-\mu)b_i-d_i-\sum_{i=0}^Lc_{ij}x_j^\mu\right)x_i^\mu
		+\mu \sum_j m_{ji} b_j x^\mu_j,
		\quad i=0,\ldots,L,
	\end{align}
	with initial condition $x(0)$. 
\end{proposition}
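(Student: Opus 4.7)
The plan is to follow the classical Kurtz semi-martingale approach. I would start by writing the Dynkin decomposition
\begin{equation*}
X^K(t) = X^K(0) + \int_0^t F^\mu(X^K(s))\,ds + M^K(t),
\end{equation*}
where $F^\mu:\R_+^{L+1}\to\R^{L+1}$ has $i$-th coordinate
\begin{equation*}
F^\mu_i(x) = \Bigl((1-\mu)b_i - d_i - \sum_{j=0}^L c_{ij} x_j\Bigr) x_i + \mu \sum_{j=0}^L m_{ji} b_j x_j,
\end{equation*}
obtained by applying the generator $L^{(K)}$ to the coordinate functions $x\mapsto x_i$, and $M^K$ is a local martingale whose $i$-th coordinate has predictable quadratic variation bounded by
\begin{equation*}
\langle M^K_i\rangle_t \leq \frac{1}{K}\int_0^t \Bigl(b_i X^K_i(s) + (d_i + \textstyle\sum_j c_{ij} X^K_j(s))X^K_i(s) + \mu \sum_j b_j X^K_j(s)\Bigr)\,ds.
\end{equation*}

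The first step is an a priori bound: because $c_{ii}>0$ for every $i$, each coordinate $X^K_i$ is dominated by a logistic-type process and one gets that for every $T>0$ there exists $R=R(T,x(0))$ such that, with probability going to $1$, $\sup_{t\leq T} \|X^K(t)\|_\infty \leq R$. (One compares the birth and death rates of the total mass $\sum_i X^K_i$ to see that the positive drift is defeated by the quadratic competition once the population is large.) On the compact set $\{\|x\|_\infty\leq R\}$ the drift $F^\mu$ is Lipschitz, with some constant $\Lambda$.

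The second step is to control $M^K$: by Doob's $L^2$-inequality and the a priori bound, $\E[\sup_{t\leq T}\|M^K(t)\|_\infty^2] = O(1/K)$, so $\sup_{t\leq T}\|M^K(t)\|\to 0$ in probability. The third step is to close the argument with Grönwall. Writing $\Delta^K(t) := X^K(t) - x^\mu(t)$ and subtracting the integral equation satisfied by $x^\mu$, on the event $\{\sup_{t\leq T}\|X^K\|_\infty \leq R\}$ one has
\begin{equation*}
\|\Delta^K(t)\| \leq \|X^K(0)-x(0)\| + \Lambda\int_0^t \|\Delta^K(s)\|\,ds + \sup_{s\leq T}\|M^K(s)\|,
\end{equation*}
so Grönwall gives $\sup_{t\leq T}\|\Delta^K(t)\| \leq (\|X^K(0)-x(0)\| + \sup_{s\leq T}\|M^K(s)\|)\,e^{\Lambda T}$, and the right-hand side tends to $0$ in probability by hypothesis and by the martingale estimate.

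The main obstacle is really the a priori containment step: without it, $F^\mu$ is only locally Lipschitz and the Grönwall estimate has no chance of being uniform in $K$. It is the logistic structure encoded in the $c_{ii}>0$ assumption that makes this work; alternatively one can localize by stopping at the first exit time of a large ball, show the stopping time tends to infinity in probability as $K\to\infty$, and recover the result. Uniqueness of the solution to the ODE is immediate from the local Lipschitz character of $F^\mu$ on $\R^{L+1}_+$, so the limit identified by Grönwall is indeed $x^\mu$.
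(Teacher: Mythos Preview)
The paper does not supply a proof of this proposition: it is quoted verbatim as a citation of Ethier and Kurtz \cite{EthKur1986}, Chapter 11, Theorem 2.1, and the text moves on immediately to the definition of $\bar x_i$ and the invasion fitness. So there is nothing to compare against in the paper itself.

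Your sketch is the standard Ethier--Kurtz argument and is essentially correct. One point that deserves a sentence of care is the a priori containment step: you cannot bound each coordinate $X^K_i$ in isolation, since the mutation inflow $\mu\sum_j m_{ji} b_j X^K_j$ couples the coordinates. The clean way is, as you hint, to control the total mass $S^K=\sum_i X^K_i$: its total birth rate is at most $b_{\max} S^K$, while the total death rate is at least $\sum_i c_{ii}(X^K_i)^2 \geq \frac{c_{\min}}{L+1}(S^K)^2$ by Cauchy--Schwarz, so $S^K$ is stochastically dominated by a one-dimensional logistic process with carrying capacity of order $K$, and stays bounded on $[0,T]$ with probability tending to one. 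With that in hand your Doob/Grönwall closure is standard.
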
	

There will be two important quantities associated with our processes.
The equilibrium density of a monomorphic $i$-population is
\begin{equation} \label{defbarn}
	\bar{x}_{i}: =\frac{b_i -d_i}{c_{ii}} \vee 0.
\end{equation}
The effective growth rate (or selective advantage or disadvantage) of a small mutant population with trait $i$ 
in a $j$-population at equilibrium, 
is the so-called \emph{ invasion fitness},
$f_{ij}$, given by
\begin{equation} \label{deffitinv}
	f_{ij} := b_i -d_i - c_{ij}\bar{x}_j.
\end{equation}

The importance of the above two quantities follows from the properties of the limiting competitive Lotka-Volterra 
system \eqref{cde-gen} with $\mu=0$.
Namely, if we assume
\begin{equation}\label{condfitnesses}
	\bar{x}_{1}= \frac{b_1-d_1}{c_{11}} >0,\quad \text{and} \quad f_{01}<0<f_{10},
\end{equation}
then the system \eqref{cde-gen} with $\mu=0$ and $L=1$ has a unique stable equilibrium, $(x_0=0,x_1=\bar{x}_1)$, and two unstable steady states, 
$(x_0=\bar{x}_0,x_1=0)$ and $(x_0=0,x_1=0)$.

We are interested in the situation where $\bar x_0>0$, 
$f_{i0}<0$, $1 \leq i \leq L-1$, $f_{L0}>0$, and $f_{0L}<0$. 
Under these assumptions, all mutants created by the initial population 
initially have a negative growth rate,  and thus tend to die out. However, 
if by chance such mutants survive long enough to give rise to further mutants, such that eventually
an individual will reach the trait $L$, it will found a population at this trait that, with positive probability,
will grow and eliminate the resident population through competition. Our purpose is to analyse 
precisely how this process  happens. The process that we want to describe can 
be seen as a manifestation of the phenomenon of \emph{metastability} (see, e.g., the recent monograph 
\cite {BH15} and references therein). The initial population appears stable for a long time and makes repeated attempts to send mutants to the trait $L$, 
which will eventually be reached and take over the 
entire population. As we will see, this  leads to several features known from metastable 
phenomena in other contexts: exponential laws of the transition times, 
fast realisation of the final ``success run", and the realisation of this run by a ``most likely" realisation. 
As {usual} in the context of metastability, we  need a scaling parameter to make precise 
asymptotic statements. In our case this is the \emph{carrying capacity}, $K$, 
which allows to scale the population size to infinity.
Apart from scaling the population size by taking $K\uparrow\infty$, we are also  interested in the 
limit of small  mutation {probabilities,} $\mu=\mu_K\downarrow 0$, with possibly simultaneous time rescaling. 
This  gives rise to essentially different asymptotics, depending on how $\mu$ tends to zero as a function 
of $K$.

\section{Results} 

Before stating our main results, let us  make our assumptions precise:
\begin{assumption}\label{ass.1}
	\begin{enumerate}
		\item[$\bullet$] Viability of the resident population:
		$ \bar{x}_0>0. $
		\item[$\bullet$] {Fitness valley:} All traits are unfit with respect to 0 except $L$:
		\begin{equation} \label{A1} 
			f_{i0}<0, \text{ for } i\in\llbracket 1,L-1\rrbracket  \text{ and } f_{L0}>0.
		\end{equation}
		\item[$\bullet$] All traits are unfit with respect to $L$:
		\begin{equation} \label{A2}
			f_{iL}<0, \text{ for  } i\in\llbracket 0,L-1\rrbracket.
		\end{equation}
		\item[$\bullet$]  The following fitnesses are different:
		\begin{align}
			f_{i0}&\neq f_{j0}, \text{ for all }i\neq j,\label{inequ-fit-1}\\
			f_{iL}&\neq f_{jL} ,\text{ for all }i\neq j.\label{inequ-fit-2}
		\end{align}
	\end{enumerate}
\end{assumption}

\begin{figure}[h!]
	\centering
	\includegraphics[width=.4\textwidth]{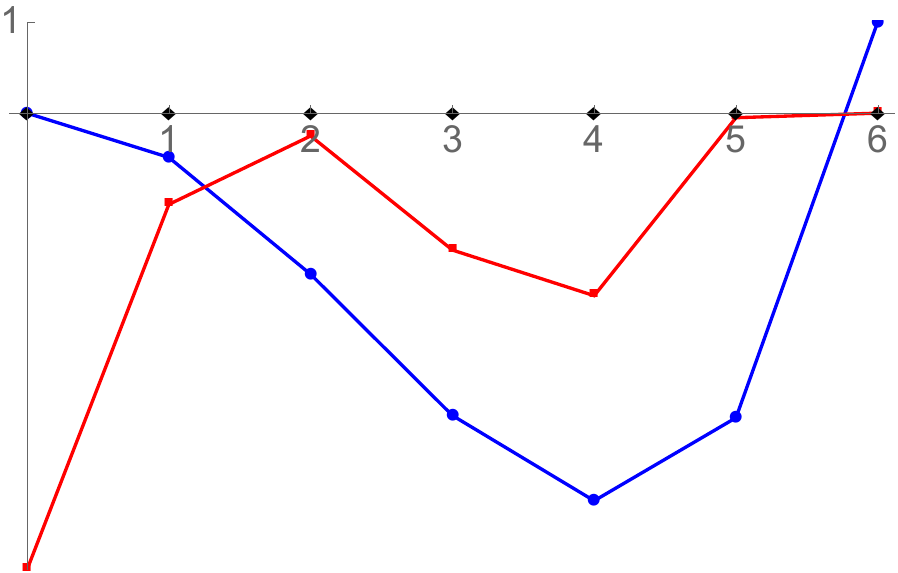}
	\caption {Example of {a} fitness landscape satisfying Assumption \ref{ass.1} with $L=6$. Blue curve: $i\mapsto f_{i0}$, red curve: $i\mapsto f_{iL}$.}
	\label{fitness}
\end{figure}

{Note that conditions \eqref{inequ-fit-1} and \eqref{inequ-fit-2} are imposed in order to lighten the analysis of the deterministic system (see Lemma \ref{lemma}). Similar results are probably true without these assumptions 
	but the proofs would be {unnecessarily} more technical. Similar {hypotheses} are made in the article \cite{DurMay2011}. 
}

Before proceeding to the statements of our results, let us show that Assumption \ref{ass.1} can be 
realised with well-chosen birth-, death-, and competition rates. 
A possibility is to fix birth and death rates associated to every trait to be $1$ and $0$, respectively. 
In that case, Assumption \ref{ass.1}  imposes  constraints  on the competition rates 
$(c_{i0})_{i\in\llbracket 1, L\rrbracket}$ 
and 
$(c_{iL})_{i\in\llbracket 0, L-1\rrbracket }$, which must be equal to $(1-f_{i0})_{i\in\llbracket 1, L\rrbracket}$ and 
$(1-f_{iL})_{i\in\llbracket 0, L-1\rrbracket}$, respectively. 
We  complete the competition matrix by taking symmetric values 
(except for $c_{0L}$ and $c_{L0}$ which are now fixed and different) and by choosing $c_{ij}=1$, for all 
pairs $(i,j)\in{\llbracket 1, L-1\rrbracket}^2$.

\subsection{Deterministic limit $ (K,\mu) \to (\infty,\mu)$, then $\mu\to 0$} \label{sec-det}

The first regime we are interested in is the case when $\mu$ is small but does not scale with the population size. From a biological point of view, 
this corresponds to high mutation {probabilities.} 
Note that a similar scaling {has been} studied in \cite{BovWan2013} {and \cite{DurMay2011}.} 
In both papers, the context was very different since these authors considered
the arrival of \emph{fitter} rather than unfitter mutants, as we do here.
In \cite{BovWan2013}, individuals only {suffer} competition from the nearest neighbouring traits.
{In \cite{DurMay2011}, an  exponentially growing population of tumor cells is modeled by a Moran model {with immigration},
	and back mutations are not considered.} 


\begin{theorem}\label{thm-piecewise-constant}
	Suppose that Assumption \ref{ass.1} 
	holds. 	
	Take as initial condition 
	\begin{equation}
		x^\mu(0)=(\xbar_0,0,\ldots,0). 
	\end{equation}
	
	Then, for $i \in {\llbracket 0,L\rrbracket}$, as $\mu\to0$,
	uniformly on bounded time intervals, 
	\begin{equation}\label{lim-proc}
		\frac{\log\left[x_i^{\mu}\left(t\cdot\log\left(1/\mu\right)\right)\right]}{\log(1/\mu)}
		\to
		x_i(t),
	\end{equation}
	where $x_i(t)$ is piece-wise linear. More precisely, 
	\begin{enumerate}
		\item in the case of  1-sided mutations, $m_{ij}=m^{(1)}_{ij}$, for ${i\in\llbracket 0, L-1\rrbracket}$,
		\begin{align} 
			x_i(t)&=\left\{
			\begin{array}{ll}
				-i, 															
				& \text{for } 0\leq t <  L/f_{L0},\\
				-i- (t-L/f_{L0})	\min_{k\in\Ninterval{0}{i}}	|f_{kL}|,	&\text{for } t>L/f_{L0},
			\end{array}
			\right.\\\nonumber
			and \qquad \qquad \qquad \qquad &\\
			x_L(t)&=\left\{
			\begin{array}{ll}
				-L+f_{L0}t,			& \text{for } 0\leq t < L/f_{L0},\\
				0,		&\text{for }  t>L/f_{L0}.
			\end{array}
			\right.
		\end{align}
		\item in the case of  2-sided mutations, $m_{ij}=m^{(2)}_{ij}$:
		consider the sequence $\{i_1,\ldots,i_r\}$ of ``fitness records", defined recursively by $i_1=0$, 
		$i_k=\min\{{i\in\llbracket 0, L-1\rrbracket}: f_{iL} <f_{i_{k-1}L} \}$,
		\begin{equation}
			x_i(t)=\begin{cases}	
				-i \vee (-L-(L-i)+f_{L0}t) , 			
				& \text{for } 0\leq t < L/f_{L0},\\
				-(L-i)
				\vee
				\max_{k\in\llbracket0, i \rrbracket }	\{-i-|f_{kL}|(t-L/f_{L0})\}		\\
				\hspace{1.5cm}\vee
				\max_{k\in\llbracket 1, r \rrbracket }	\{-i_k-|i-i_k|-|f_{i_kL}|(t-L/f_{L0})\},			
				&\text{for } t>L/f_{L0}.
			\end{cases}
		\end{equation}
	\end{enumerate}
	{Moreover, 
		\begin{align}
			{\left(x_0^\mu(t\log (1/\mu)),x_L^\mu(t\log (1/\mu))\right)\to
				\left\{
				\begin{array}{ll}
					(\xbar_0,0),		& \text{for } 0\leq t < L/f_{L0},\\
					(0,\xbar_L),		&\text{for }  t>L/f_{L0}.
				\end{array}
				\right.}
		\end{align}
	}
\end{theorem}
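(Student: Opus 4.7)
The plan is to analyse the dynamics in three temporal phases on the original (unrescaled) time scale. The strategy throughout is to trap $x_i^\mu$ between upper and lower solutions of explicitly solvable linearised ODEs, and to then read off \eqref{lim-proc} by taking $\log(\cdot)/\log(1/\mu)$ and rescaling time by $\log(1/\mu)$.

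\textbf{Phase I: building up the fitness valley.} While all $x_i^\mu$ with $i\geq 1$ remain $o(1)$, the coordinate $x_0^\mu$ stays within $O(\mu)$ of $\bar{x}_0$ by stability of $\bar x_0$ in the single-type Lotka--Volterra equation, plus Gronwall. I would then establish inductively, for $i\in\llbracket 1,L-1\rrbracket$, the scaling $x_i^\mu(t) \asymp \mu^i$ after a bounded transient, by comparison with
\begin{equation*}
\dot y_i = f_{i0}\, y_i + \mu\, b_{i-1}\, x_{i-1}^\mu,
\end{equation*}
which, since $f_{i0}<0$, has a stable quasi-equilibrium $y_i^\star = \mu\, b_{i-1}\, x_{i-1}^\mu / |f_{i0}| \asymp \mu^i$. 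At trait $L$, the analogous equation together with $f_{L0}>0$ yields $x_L^\mu(t) \asymp \mu^L \eee^{f_{L0} t}$, so that $x_L^\mu$ reaches order $1$ at time $T_\mu := (L/f_{L0})\log(1/\mu) + O(1)$. Taking $\log/\log(1/\mu)$ in rescaled time reproduces the initial linear segments of the theorem.

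\textbf{Phase II: invasion.} At time $T_\mu$, the pair $(x_0^\mu,x_L^\mu)$ is near $(\bar{x}_0,\varepsilon)$ for some $\varepsilon>0$ independent of $\mu$, while all intermediate traits are still $O(\mu)$. The 2-dimensional competitive Lotka--Volterra system governing $(x_0^\mu,x_L^\mu)$, thanks to $f_{L0}>0$ and $f_{0L}<0$, admits $(0,\bar x_L)$ as a stable equilibrium and drives this state there in $O(1)$ additional time. As $O(1)$ is negligible on the rescaled scale, this yields both the asserted jump of $(x_0^\mu,x_L^\mu)$ and uniform convergence away from $t=L/f_{L0}$.

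\textbf{Phase III: decay.} Once $x_L^\mu\approx\bar x_L$, each $i<L$ satisfies, to leading order,
\begin{equation*}
\dot x_i^\mu \approx -|f_{iL}|\, x_i^\mu + \mu\, b_{i-1}\, x_{i-1}^\mu + \mu\, b_{i+1}\, x_{i+1}^\mu,
\end{equation*}
the last term being absent in case (1). Iterating Duhamel's formula along chains of mutations, $x_i^\mu$ is a superposition of terms of the form $\mu^{\ell}\, \eee^{-|f_{jL}| s}$, where $\ell$ is the length of the chain, $j$ its slowest-decaying vertex, and $s = t-T_\mu$. In case (1) only the monotone chain $0\to 1\to\cdots\to i$ contributes, producing rate $\min_{k\leq i}|f_{kL}|$. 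In case (2), back-propagation from $x_L^\mu\approx\bar x_L$ via $L\to L-1\to\cdots\to i$ produces the floor $-(L-i)$, while detours that reach a slower-decaying trait $i_k$ and return to $i$ give the third maximum, with $|i-i_k|$ counting the extra mutational cost. Taking $\log/\log(1/\mu)$ turns the leading exponential into the leading linear function, and the superposition into a maximum.

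\textbf{Main obstacle.} The delicate point is the 2-sided decay phase: the asymptotic evaluation of the sum over all mutation chains must be shown to reduce exactly to the max in the theorem. I would formulate this as an optimisation over paths on the trait lattice with edge cost $1/\log(1/\mu)$ (per mutation) and vertex cost $|f_{jL}|$ (per unit rescaled time spent at $j$), and verify that optima are realised either by monotone forward chains from $0$, by the reverse chain from $L$, or by a chain reaching a fitness record $i_k$ and returning, which is exactly what defines the records in Assumption~\ref{ass.1}. The hypotheses \eqref{inequ-fit-1}--\eqref{inequ-fit-2} ensure uniqueness of the minimisers and rule out logarithmic corrections that would otherwise spoil the clean piecewise linear limit.
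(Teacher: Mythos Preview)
Your three-phase decomposition matches the paper's, and Phase~II (the swap) is treated identically. The differences lie in the technical machinery for Phases~I and~III, and there is one genuine gap.

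\textbf{Phase I, 2-sided case.} Your inductive argument yielding $x_i^\mu\asymp\mu^i$ is correct only for the 1-sided kernel. With back-mutations, once $x_L^\mu$ has grown to order $\mu^{L-2}$, the term $\mu b_L x_L^\mu$ feeding $\dot x_{L-1}^\mu$ becomes of order $\mu^{L-1}$, comparable to $x_{L-1}^\mu$ itself, and thereafter dominates: $x_{L-1}^\mu$ is dragged upward with $x_L^\mu$. This cascade propagates backwards and is precisely the branch $-L-(L-i)+f_{L0}t$ in the 2-sided formula, which your outline does not produce. The paper handles this by subdividing Phase~I into $\sim L/2$ sub-intervals, delimited by the successive times at which $x_L^\mu$ crosses the levels $\mu^{L-2k}$; on each sub-interval the system is sandwiched by a \emph{block}-triangular linear system (forward mutations dominate on $\llbracket 0,L{-}k\rrbracket$, backward on $\llbracket L{-}k{+}1,L\rrbracket$), and an explicit diagonalisation lemma for such triangular systems gives the leading power of $\mu$ in closed form.

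\textbf{Phase III, 2-sided case.} Here the paper does \emph{not} expand Duhamel over mutation chains. It passes directly to the rescaled variable $\tilde x_i^\mu:=\log x_i^\mu(t\log(1/\mu))/\log(1/\mu)$, writes the ODE it satisfies, and shows by a barrier argument that $\tilde x_i^\mu$ can never drop more than $1+\delta$ below either neighbour: whenever the gap approaches $1$, the mutation term in $d\tilde x_i^\mu/dt$ becomes of order $\mu^{-\delta}$ and pushes $\tilde x_i^\mu$ back up. This ``security-region'' constraint, together with the intrinsic slope $f_{iL}$, forces the piecewise-linear limit and yields the fitness-records formula directly. Your path-optimisation picture is the correct tropical heuristic for the same tridiagonal system and would give the same answer, but making it rigorous requires controlling an infinite sum over back-and-forth walks; the paper's barrier argument is shorter and sidesteps this entirely.

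In summary: same skeleton, but your 2-sided Phase~I has a real gap (back-propagation before the swap is missing), and for the 2-sided Phase~III the paper's barrier method on the rescaled variable is a cleaner route than your chain expansion.
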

The shape of $x(t):=(x_0(t),\ldots,x_L(t))$ can be seen on Figures \ref{fig-pow-1sided} and \ref{fig-pow-general} 
in the 1-sided and 2-sided cases, respectively.

In the 1-sided case, the rescaled deterministic process $x(t)$ can be explained as follows: {In the first phase, the 0-population stays close to $\xbar_0$ until the $L$-population reaches order one. As competition between the populations
	of type $i$ and $j$ for $i, j\neq 0$ is negligible in comparison to competition between type $i$ and type 0,} {for $i \in \llbracket 1, L \rrbracket$,} the $i$-population 
first stabilises around $O(\mu^i)$ in a time of order {$o(1)$,} then the $L$-population, {starting from a size $O(\mu^L)$,} 
grows exponentially with rate $f_{L0}$ until reaching order one 
(which {takes} a time ${L}/{f_{L0}}$) while the other types stay stable. Next, a swap between populations 
$0$ and $L$ 
(two-dimensional Lotka-Volterra system) is happening in a time of order {$o(1)$,} and finally, for $i\neq L$, 
the $i$-population decays exponentially from 
$O(\mu^i)$ with a rate given by the lowest (negative) fitness of its left neighbours, $(\min_{j\in\Ninterval{0}{i}}|
f_{jL}|)$ {while the $L$-population approaches its equilibrium density $\xbar_L$.}
{To understand the rate of decrease during the last phase, let us consider only the $0$- and $1$-
	populations. The competition exerted by populations 
	$j \in \llbracket 0,L-1 \rrbracket $ on the $0$- and $1$-populations is negligible with respect to the competition 
	exerted by the $L$-population, which has 
	a size of order $1$. As a consequence, $x_0^\mu$ has a dynamics close to this of the solution to:
	\be
	\dot{\tilde{x}}_0(t)=f_{0L}\tilde{x}_0(t),
	\ee
	that is to say, {$x_0(t) \approx x_0(0)\eee^{f_{0L}t}$}, and $x_1$ has a dynamics close to this of the solution to:
	\be
	\dot{\tilde{x}}_1(t)=f_{1L}\tilde{x}_1(t)+ \mu\tilde{x}_0(t)=f_{1L}\tilde{x}_1(t)+ \mu\tilde{x}_0(0)\eee^{f_{0L}t} ,
	\ee
	that is to say
	\be
	x_1(t) \approx x_1(0)\eee^{f_{1L}t} + \mu \frac{x_0(0)}{f_{0L}-f_{1L}}\left(\eee^{f_{0L}t}-\eee^{f_{1L}t}\right). 
	\ee
	From this heuristics we get that
	\begin{align} \nonumber x_1^\mu(t \log (1/\mu)) &\approx x_1^\mu(0)\mu^{|f_{1L}|t} + \mu \frac{x_0^\mu(0)}{|f_{1L}|-|f_{0L}|}
		\left(\mu^{|f_{0L}|t}-\mu^{|f_{1L}|t}\right)\\
		& = \mu \left(C_1\mu^{|f_{1L}|t} + \frac{C_0}{|f_{1L}|-|f_{0L}|}\left(\mu^{|f_{0L}|t}-\mu^{|f_{1L}|t}\right)\right), \end{align}
	where $C_0$ and $C_1$ are of order $1$.
	We thus see that the leading order is $\mu^{1+\inf\{ |f_{0L}|, |f_{1L}| \}t}$. Reasoning in the similar way for the other populations yields that 
	the leading order for the variation of the $i$-population size ($ i \in \llbracket 0,L-1 \rrbracket$) is 
	$\mu^{i+\inf\{ |f_{0L}|, |f_{1L}|,...,|f_{iL}| \}t}$.}

In the 2-sided case, a modification of the order of magnitude of the $i$-population (for {$i\neq L$)} happens due to backward mutations. 
{The reasoning is similar to the heuristics we have just described, except that mutants from the $i$-population ($i \in \llbracket 1,L\rrbracket$) 
	might also have an impact on the decrease rate of the $(i-1)$-population. This is the case if $x_i^\mu/x_{i-1}^\mu \geq C/\mu$, for a positive constant $C$. 
	Under this condition 
	the number of type-$(i-1)$ individuals produced by mutations of type $i$-individuals has the same order as the type $(i-1)$ population size.}

\subsection{Stochastic limit $(K,\mu)\to(\infty,0)$} \label{res_sto}

When the mutation {probability} is small, the dynamics and time scale of the invasion process  depends on the scaling of the mutation probability per reproductive 
event, $\mu$, with respect to the carrying capacity $K$. We consider in this section {mutation probabilities with two possible forms. Either,}
\begin{equation}\label{muenpuissancedeK}
	{ \mu= f(K) K^{-1/\alpha}, \quad \text{with} \quad  \alpha \geq 1 \quad \text{and} \quad |\ln f(K)| = o(\ln K),}
\end{equation}
{or}
\begin{equation}
	{ \mu = o(1/K).}
\end{equation}
For simplicity, in Sections 
\ref{res_sto} and \ref{res_ext} we only consider the mutation kernel $m_{ij}^{(1)}= \mu \delta_{i+1,j}$. 

For $v\geq 0$ and $0\leq i \leq L$,   let  $T_v^{(K,i)}$ denote  the first  time the $i$-population  reaches the  size 
$\lfloor v K\rfloor$, 
\begin{equation}\label{defTepsKM}
	T^{(K,i)}_v := \inf \{ t \geq 0, X_i(t)= \lfloor v K \rfloor \}.
\end{equation}

In a time of order one, there will be of order {$K\mu^i$} mutants of 
type $i$, provided that this number is larger than $1$. In particular, there will be
of order {$K\mu^L$} fit $L$-mutants at time one, if $L/\alpha <1$. This is the regime of large mutation {probability.}
In this case, the time for the $L$-population to hit a size of order $K$ 
is of order $\log K$. We obtain a precise estimate of this time, as well as of the time for the trait $L$ to 
outcompete the other traits under 
the same assumptions.
Let us introduce 
\be\label{timetoextinction}
t(L,\alpha):= \frac{L}{\alpha} \frac{1}{f_{L0}}+ \sup \left\{\left(1 - \frac{i}{\alpha}\right) \frac{1}{|f_{iL}|}, 0 \leq i \leq L-1\right\}, 
\ee
and the time needed for the   populations at all sites  but $L$ to get extinct,
\begin{equation}\label{defT0ttsaufM}
	T^{(K,\Sigma)}_0 := \inf \Big\{ t \geq 0, \sum_{0\leq i \leq L-1}X_i(t)= 0 \Big\}.
\end{equation}
With this notation we have the following asymptotic result.

\begin{theorem}
	\label{pro_phase1_mugrand}
	Assume that \eqref{muenpuissancedeK} holds and that $L < \alpha < \infty$.
	Then there exist two positive constants $\eps_0$ and $c$ such that, for every $0<\eps \leq \eps_0$,
	\begin{equation}\label{eq1th}
		\liminf_{K \to \infty} \P \left( (1-c\eps)\frac{1}{\alpha} \frac{L}{f_{L0}}< 
		\frac{T^{(K,L)}_\eps }{\log K} { <\frac{T^{(K,L)}_{\bar{x}_L-\eps} }{\log K}}
		< (1+c\eps)\frac{1}{\alpha} \frac{L}{f_{L0}} \right) \geq 1-c\eps.
	\end{equation}
	{Moreover,
		\begin{equation}\label{eq2th}
			\frac{T^{(K,\Sigma)}_{0}}{\log K} \to t(L,\alpha), \quad \text{in probability}, \quad (K \to \infty) 
		\end{equation}
		and} there exists a positive constant $V$ such that
	\begin{equation} \label{eq3th}   
		\limsup_{K \to \infty} \P \left( \sup_{t \leq \eee^{KV}}\left| 
		X_L \left(T^{(K,L)}_{\bar{x}_L-\eps}+t \right)-\bar{x}_L K  \right|
		> c\eps K\right) \leq c\eps. \end{equation}
\end{theorem}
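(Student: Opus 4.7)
The plan is to split the trajectory into four successive phases and to couple the stochastic process either with a multitype birth--death process with immigration (while the traits of interest are of microscopic size) or with the deterministic Lotka--Volterra limit provided by the Ethier--Kurtz theorem (while they are macroscopic), tracking each trait separately.

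\emph{Loading and growth of trait $L$.} So long as $X_0/K$ stays close to $\bar{x}_0$ by the law of large numbers, each $X_i$ with $i\geq 1$ can be sandwiched between two subcritical birth--death processes with immigration from trait $i-1$ at rate $\sim\mu b_{i-1}X_{i-1}$. An induction on $i$ shows that in a time of order one every trait $i\leq L$ reaches size of order $K\mu^i=f(K)^i K^{1-i/\alpha}$, which under the assumption $L<\alpha$ is much larger than one. In particular $X_L$ starts its bulk dynamics at size $\sim K^{1-L/\alpha}$; as long as $X_L\ll K$, it is then sandwiched between two supercritical birth--death processes with Malthusian parameter arbitrarily close to $f_{L0}>0$. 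Survival occurs with probability $1-o(1)$ because the initial size diverges with $K$, and standard martingale convergence for supercritical branching processes yields
\[
T^{(K,L)}_{\eps}=\frac{\log(\eps K)-(1-L/\alpha)\log K}{f_{L0}}+O(1)=\frac{L}{\alpha f_{L0}}\log K+O(\log(1/\eps)),
\]
which gives the bounds on $T^{(K,L)}_\eps$ in \eqref{eq1th}. The upper bound on $T^{(K,L)}_{\bar{x}_L-\eps}$ is obtained by adding a swap phase: once $X_L\geq\eps K$ while $X_0\simeq\bar{x}_0K$ and all intermediate traits are $\ll K$, the rescaled pair $(X_0/K,X_L/K)$ shadows, up to $O(K^{-1/2})$ fluctuations, the two-dimensional Lotka--Volterra flow started at $(\bar{x}_0,\eps)$, which reaches an $\eps$-neighbourhood of $(0,\bar{x}_L)$ in bounded time and hence vanishes after division by $\log K$.

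\emph{Extinction of unfit traits and stability.} At the end of the swap, trait $L$ sits near $\bar{x}_L K$ and each trait $i<L$ has size of order $K\mu^i$. Each $X_i$ is then dominated by a subcritical birth--death process of Malthusian parameter close to $f_{iL}<0$, fed with mutation immigration from trait $i-1$. Solving this cascade inductively from $i=0$ upwards, trait $i$ reaches level one around time
\[
\frac{L}{\alpha f_{L0}}\log K+\frac{1-i/\alpha}{\min_{k\leq i}|f_{kL}|}\log K;
\]
once it hits level one and the upstream trait is also exhausted, absorption at zero occurs in additional time $O(1)$ by comparison with a subcritical Galton--Watson process. Taking the maximum over $i$ and using the identity
\[
\sup_{0\leq i\leq L-1}\frac{1-i/\alpha}{\min_{k\leq i}|f_{kL}|}=\sup_{0\leq i\leq L-1}\frac{1-i/\alpha}{|f_{iL}|},
\]
which holds because the left-hand supremum must be attained at an index $i$ where $|f_{iL}|$ equals the running minimum of $|f_{\cdot L}|$, yields \eqref{eq2th}. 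Assertion \eqref{eq3th} then follows from classical exponential-time stability of the one-dimensional logistic birth--death process near its equilibrium $\bar{x}_L K$: as the other traits are already tiny and produce only negligible mutation influx on time scales $\ll\eee^{VK}$, $X_L$ is confined to a $c\eps K$-neighbourhood of $\bar{x}_L K$ for at least that long by standard large-deviation arguments.

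The main obstacle lies in the extinction phase: the bookkeeping of $L$ populations of widely different sizes $K\mu^0,\ldots,K\mu^{L-1}$ over a logarithmically long horizon, with each one fed by mutations from the slower-decaying upstream trait, requires uniform coupling with birth--death processes with time-dependent immigration that remains valid down to the hitting time of level one. The combinatorial identification of the effective extinction rates through the supremum identity above is the technical heart of the proof.
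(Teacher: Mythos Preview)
Your proposal is correct and follows essentially the same approach as the paper: the four-phase decomposition (loading, supercritical growth of $X_L$, Lotka--Volterra swap, cascade extinction), the coupling of each $X_i$ with birth--death processes with immigration from trait $i-1$, the inductive bound $\E[X_i(t)]\lesssim K\mu^{i}\eee^{-\min_{k\leq i}|f_{kL}|\,t}$ during the extinction phase followed by a Markov-inequality argument, and the large-deviation stability of the logistic process for \eqref{eq3th}. The paper carries out the loading phase via explicit mean/variance computations and Doob's submartingale inequality (its Lemma on the event $\mathcal{E}_\eps^K$), and identifies the slowest-decaying trait through an index $\beta_L:=\arg\min_{0\leq j\leq L-1}\{|f_{jL}|/(1-j/\alpha)\}$ together with an auxiliary constant $l$; your supremum identity $\sup_i(1-i/\alpha)/\min_{k\leq i}|f_{kL}|=\sup_i(1-i/\alpha)/|f_{iL}|$ is a cleaner equivalent formulation of the same reduction.
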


In other words, it takes a time of order $t(L,\alpha) \log K$ for the $L$-population to outcompete the other populations and enter in a neighbourhood 
of its monomorphic equilibrium size $\bar{x}_L K$. Once this has happened,  it stays close to this equilibrium for at least a time $\eee^{KV}$, where 
$V$ is a positive constant.

Note that the constant $t(L,\alpha)$ can be intuitively computed from the deterministic limit. Indeed, 
for $\alpha>L$, we prove that the system performs small 
fluctuations around the deterministic evolution studied above: the $i$-population first stabilises around {$O(K\mu^i)$} in a time of order one, then 
the $L$-population grows exponentially with rate $f_{L0}$ 
until reaching order $K$ ({{super-critical branching process,} needs a time close to $L\log K /(\alpha f_{L0})$) while the other types stay stable, 
	the swap between populations $0$ and $L$ then {takes} a time of order one, and finally, for $i\neq L$, the $i$-population decays exponentially 
	from {$O(K\mu^i)$} 
	to extinction with a rate given by the lowest (negative) fitness of its 
	left neighbours ({sub-critical branching process,} needs a time close to $(\sup_{j\in\Ninterval{0}{i}}(1-j/\alpha)/|f_{jL}|)\log K$). Thus the time until 
	extinction of all non-$L$ populations is close to the constant \eqref{timetoextinction} times $\log K$.
	
	{Note that Theorem \ref{pro_phase1_mugrand} is close in spirit to the results of Durrett and Mayberry in \cite{DurMay2011}, and some of our techniques of 
		proof are similar.  However the processes they consider differ from ours at many levels. More precisely, they consider a Moran model with either 
		fixed or growing population size (with a growth independent of the composition in traits of the population), 
		and mutants with increasing fitnesses, while we work with a model with varying population size (where variations 
		depend on the population composition via trait dependent competitive interactions) and allow negative fitnesses.
		Moreover in \cite{DurMay2011}, all mutations have the same effect and back mutations are not considered, whereas it leads 
		to interesting behaviour and more technicalities in our case. Finally, the way mutations are encoded in Moran like 
		models do not allow to distinguish between effects due to birth rate, death rate, and competition. The class of models we 
		consider allow a much wider variety of mutations (see Section 3 in \cite{rebekka2015genealogies} for a detailed discussion 
		on these aspects).}
	
	Next we consider the case of small mutation {probability,}  when $L/\alpha>1$. In this case, there is no $L$-mutant
	at time one, and the fixation of the trait $L$ happens on a much longer time scale.
	{In this section, we are interested in the case where the mutation $L$ goes to fixation with a probability close to one. In particular, the first 
		$L$-mutant has to be born before the extinction of the population.}
	
	We define, for $0 < \rho <1$,
	\be
	\lambda(\rho):= \sum_{{k=1}}^\infty \frac{(2k)!}{{ (k-1)! (k+1)!}} \rho^k \left( 1-\rho \right)^{k+1},
	\ee
	and, for $\lfloor \alpha \rfloor +1 \leq i \leq L-1$, set
	$ \rho_i:= b_i/(b_i + d_i + c_{i0}\bar{x}_0)$.

	\begin{theorem} \label{pro_mupetit}
		\begin{itemize}
			\item[$\bullet$] {
				Assume that \eqref{muenpuissancedeK} holds, $\alpha \notin \N$ and ${1<} \alpha < L$.}
			Then there exist {two} positive constants $\eps_0$ and $c$, and two exponential random variables $E_-$ and $E_+$ 
			with  parameters   
			\be
			(1+c\eps){\frac{ \bar{x}_0b_0... b_{\lfloor \alpha \rfloor-1}}{|f_{10}|...|f_{\lfloor \alpha \rfloor 0}|}} \frac{f_{L0}}{b_L} \prod_{i=\lfloor \alpha \rfloor +1}^{L-1}\lambda(\rho_i)
			\qquad \text{and} \qquad 
			(1-c\eps){\frac{ \bar{x}_0b_0... b_{\lfloor \alpha \rfloor-1}}{|f_{10}|...|f_{\lfloor \alpha \rfloor 0}|}} \frac{f_{L0}}{b_L} \prod_{i=\lfloor \alpha \rfloor +1}^{L-1}\lambda(\rho_i), 
			\ee
			such that, for every $\eps \leq \eps_0$,
			\begin{equation}
				\liminf_{K \to \infty} \P \left( E_-\leq 
				\frac{T^{(K,L)}_{\bar{x}_L-\eps} \vee T^{(K,\Sigma)}_{0}}{K\mu^L} \leq 
				E_+ \right) \geq 1-c\eps.
			\end{equation}
			\item[$\bullet$] {There exists a positive constant $V$ such that if $\mu$ satisfies 
				\be
				{K\mu \ll  1 \text{ and } e^{VK}\gg 1/K\mu^L },
				\ee
				then the same conclusion holds, with the corresponding parameters, for $E_-$ and $E_+$:
				\be
				(1+c\eps)\bar{x}_0 \frac{f_{L0}}{b_L} \prod_{i=1}^{L-1}\lambda(\rho_i)
				\qquad \text{and} \qquad 
				(1-c\eps)\bar{x}_0 \frac{f_{L0}}{b_L} \prod_{i=1}^{L-1}\lambda(\rho_i).
				\ee }
		\end{itemize}
		Moreover, under both assumptions, there exists a positive constant $V$ such that
		$$  
		\limsup_{K \to \infty} \P \left( \sup_{t \leq \eee^{KV}}\left| 
		X_L \left(T^{(K,L)}_{\bar{x}_L-\eps}+t \right)-\bar{x}_L K  \right|
		> c\eps K\right) \leq c\eps. 
		$$
	\end{theorem}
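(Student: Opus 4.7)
The strategy is to represent the waiting time $T^{(K,L)}_{\bar{x}_L-\eps}\vee T^{(K,\Sigma)}_{0}$ as the first arrival of a Poisson process of successful trait-$L$ invasions, whose intensity equals $K\mu^L$ times the constant in the statement. The four main ingredients will be: (i) a quasi-stationary description of the trait-$0$ population (and, in the first bullet, of the \emph{mesoscopic} traits $1,\dots,\lfloor\alpha\rfloor$); (ii) a coupling of the remaining \emph{microscopic} traits with independent subcritical continuous-time Galton–Watson trees fed by mutation; (iii) a combinatorial identity (Dwass / cycle lemma) identifying $\lambda(\rho_i)$ as the expected number of births in one such excursion; (iv) the supercritical survival probability $f_{L0}/b_L$ of a newborn trait-$L$ lineage in the $\bar x_0$-environment.

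First I would show that, on the entire timescale $K\mu^L$, the trait-$0$ population stays $\eps$-close to $K\bar{x}_0$. Since $f_{i0}<0$ for $i\in\llbracket 1,L-1\rrbracket$ and $X_L\equiv 0$ up to the first successful $L$-invasion, trait $0$ behaves like a logistic birth–death chain perturbed by $O(K\mu)$ immigration from trait $1$; its exit time from an $\eps$-neighbourhood of $K\bar{x}_0$ is at least $\exp(cK)$ by the coupling arguments already used in \cite{champagnat2006microscopic,baar2016stochastic,bovier2017recovery}. In the first bullet I would bootstrap, for $i=1,\dots,\lfloor\alpha\rfloor$, that $X^K_i(t)/\mu^i$ concentrates on
\begin{equation*}
\bar{x}_i^\flat \;:=\; \bar{x}_0\,\frac{b_0\cdots b_{i-1}}{|f_{10}|\cdots|f_{i0}|}\,\mu^i,
\end{equation*}
the unique positive fixed point of the linearised cascade frozen at $\bar{x}_0$, by coupling $X^K_i$ from above and below with birth–death-with-immigration chains fed by the already-controlled trait $i{-}1$. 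For the microscopic levels ($i\geq\lfloor\alpha\rfloor+1$ in the first bullet, $i\geq 1$ in the second) the expected occupation is $o(1)$, so distinct mutant clans can be coupled with independent subcritical continuous-time Galton–Watson trees of birth rate $b_i$ and death rate $d_i+c_{i0}\bar{x}_0$, whose embedded chain has up-probability $\rho_i$.

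I would then compute the Poisson intensity of successful $L$-invasions. A cycle-lemma (Dwass) computation gives that one type-$i$ subcritical excursion started from a single individual has expected total number of births equal to $\lambda(\rho_i)$, and hence produces on average $\mu\lambda(\rho_i)$ type-$(i{+}1)$ mutants by independent thinning at rate $\mu$. Iterating this thinning from the lowest microscopic level up to $i=L-1$, multiplying by the Poisson rate at which new microscopic excursions are created (itself proportional to $\mu$ times the preceding mesoscopic density $\bar{x}_{\lfloor\alpha\rfloor}^\flat$, or to $K\mu\bar{x}_0$ in the second bullet), and multiplying by the fixation probability $f_{L0}/b_L+O(\eps)$ of the newborn $L$-lineage (standard survival probability of the supercritical birth–death process in the $\bar{x}_0$-environment) yields a Poisson intensity equal to $K\mu^L$ times the announced product, with a multiplicative $O(\eps)$ error coming from the $\eps$-neighbourhood of $\bar{x}_0$. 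Sandwiching the successful-invasion process between two homogeneous Poisson processes of rates $(1\mp c\eps)$ times this intensity then yields the two-sided exponential bound on $(T^{(K,L)}_{\bar{x}_L-\eps}\vee T^{(K,\Sigma)}_0)/(K\mu^L)$. Once such an $L$-mutant survives, it reaches size $(\bar{x}_L-\eps)K$ in time $O(\log K)=o(K\mu^L)$ by the argument of Theorem~\ref{pro_phase1_mugrand}, and during this supercritical sweep competition by $L$ drives every other trait to extinction, so that $T^{(K,L)}_{\bar{x}_L-\eps}$ and $T^{(K,\Sigma)}_0$ coincide at leading order.

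The last assertion follows by symmetry: by \eqref{A2} the monomorphic $L$-state is also a stable equilibrium of \eqref{cde-gen} at $\mu=0$, so the same logistic-type exit-time estimate with the roles of $0$ and $L$ exchanged shows that $X_L$ stays within $c\eps K$ of $\bar{x}_L K$ for at least $\exp(VK)$, while any new mutant lineage at traits $i<L$ dies out subcritically before reaching macroscopic size. The main obstacle is to make the coupling of step (ii) sufficiently memoryless for the successive excursions to be approximately independent and the intensity to factorise as a genuine product: this requires propagating the mesoscopic quasi-equilibrium uniformly on the exponentially long window $K\mu^L$ (much longer than the bounded intervals addressed by the law of large numbers used in Section~\ref{sec-det}) via an iterative stationary coupling level by level, together with a careful control of the rare upward fluctuations of the microscopic traits that could invalidate the branching-process approximation.
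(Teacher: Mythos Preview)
Your proposal is correct and follows essentially the same route as the paper: quasi-stationarity of trait $0$ and of the mesoscopic levels $1,\dots,\lfloor\alpha\rfloor$ via birth--death--with--immigration couplings (the paper's Lemma~\ref{lemme_bounds_ni}), a tree decomposition of the microscopic excursions coupled above and below with subcritical birth--death processes, the identification of $\lambda(\rho_i)$ as the expected number of births per excursion (the paper quotes the explicit total-progeny law rather than Dwass, but this is the same computation), the survival probability $f_{L0}/b_L$, and the resulting Poisson sandwich. One small slip: you write $O(\log K)=o(K\mu^L)$, but since $K\mu^L\to 0$ when $\alpha<L$ you mean $O(\log K)=o\bigl(1/(K\mu^L)\bigr)$, i.e.\ the sweep time is negligible on the relevant waiting-time scale.
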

	
	In the {first case,} the typical trajectories of the process are as follows: mutant populations of 
	type $i$, for $1 \leq i \leq \lfloor \alpha \rfloor$, 
	reach a size of order $K\mu^i \gg 1$ in a time of order ${b_{i-1}\log K/f_{i0}} $ {(they are well approximated by birth-death processes with immigration and their behaviour is then close to the deterministic limit),} and mutant populations of type $i$, 
	for $\lfloor \alpha \rfloor 
	+1 \leq i \leq L$, describe a.s.\ finite excursions, 
	whose a proportion of order $\mu$ produces a mutant of type $i+1$. 
	Finally, every $L$-mutant has a probability $f_{L0}/b_L$ to produce a population which 
	outcompetes all 
	other populations.
	The term $\lambda(\rho_i)$ is the expected number of individuals in an excursion of a subcritical
	birth and death process of birthrate $b_i$ and death rate $d_i+c_{i0}\bar x_0$ {excepting the first individual.}
	Hence $\mu \lambda(\rho_i)$ is the approximated probability for a type $i$-population 
	$(\lfloor \alpha \rfloor +1 \leq i \leq L-1)$ to produce a mutant of type $i+1$,
	{and} the overall time scale can be recovered as follows:
	\begin{enumerate}{
			\item The last 'large' population is the $\lfloor \alpha \rfloor$-population, which {reaches} a size of order 
			{$K\mu^{\lfloor \alpha \rfloor}$} after a time
			which does not go to infinity with $K$.
			\item The $\lfloor \alpha \rfloor $-population produces an excursion of an  $(\lfloor \alpha \rfloor+1 )$-population at a rate of order 
			{$K\mu^{\lfloor \alpha \rfloor+1}$, which} has a probability of order $\mu$ to produce an excursion of a  
			$(\lfloor \alpha \rfloor+2 )$-population, and so {on,}}
	\end{enumerate}
	giving the order {$K\mu^L$}.\\
	
	{Notice that Theorem \ref{pro_mupetit} implies that, for any mutation rate which converges to zero more slowly than 
		$\eee^{-VK}/K$, the population  crosses 
		the fitness valley with  probability tending to 1, as $K\to\infty$. Our results thus cover a wide range of biologically relevant 
		cases.}
	
	{In fact, we believe that the results hold  as long as $K\mu \gg \rho_0(K)$, where $\rho_0(K)$ is the inverse of the mean extinction time 
		of the $0$-population starting at its quasi-stationary distribution (see the next section for a precise definition). However, we are not able to control 
		precisely enough the law of $X_0$ before its extinction (but see \cite{chazottes2016sharp} for results in this direction).}\\
	
	{We also think} that $\alpha \notin \N$ is only a technical assumption which could be suppressed but would 
	bring 
	more technicalities into the proof. Namely, in this case, 
	the $\lfloor\alpha \rfloor$ population size would not be large, but of order one, and we would have to 
	control its size more carefully.
	
	\subsection{On the extinction of the population}\label{res_ext}
	
	One of the key advantages of stochastic logistic birth and death processes on constant size processes when dealing with population genetics issues is that 
	we can compare the time scale of mutation processes and the population lifetime.
	In particular, for the case of fitness valley crossing, we can show that if the mutation probability $\mu$ is too small, 
	the population gets extinct before the birth of the first mutant of type $L$.
	
	The quantification of the lifetime of populations with interacting individuals is a tricky question (see \cite{chazottes2016sharp,chazottes2017time} 
	for recent results)
	and we are not able to determine necessary and sufficient conditions for the $L$-mutants to succeed in invading before the population extinction. 
	However, we provide some bounds in the next results.
	
	{The previous theorem (Theorem \ref{pro_mupetit}) provided 
		a wide range of mutation {probabilities} $\mu$ for which the type $L$ mutant fixates.
		The following theorem (Theorem \ref{theo_range})}
	provides a small range for which the population dies before the birth of the first $L$-mutant. Before stating it, we 
	{introduce a parameter scaling the extinction time of the $0$-population,
		\begin{equation}\label{defCpara}
			\rho_0(K):=\sqrt{K}\exp\left(-K(b_0-d_0+d_0\ln (d_0/b_0))\right).
		\end{equation}
		More precisely, it is stated in \cite{chazottes2016sharp} that $\E_\nu[T_0^{(mono)}]=1/\rho_0(K)$, where $\nu$ is the stationary distribution of a monomorphic 
		$0$-population, and $T_0^{(mono)}$ its extinction time.
		We also need to introduce}
	the two stopping times
	\be
	T_0:= \inf \{t \geq 0, X_i(t)=0, \forall \ 0 \leq i \leq L\} \quad \text{and} \quad B_L := \inf \{t \geq 0, X_L(t)>0\},
	\ee
	{as well as the following assumption.}
	\begin{assumption} \label{A4} The birth- and death-rates satisfy the conditions
		\begin{equation} 
			b_i < d_i, \quad 1 \leq i \leq L-1.
		\end{equation}
	\end{assumption}
	Then we have the following result:
	
	\begin{theorem} \label{theo_range}
		Suppose that Assumption \ref{ass.1} holds. 
		\begin{enumerate}
			\item If
			$ K\mu \ll \rho_0(K)$,
			then 
			$ \P \left( T_0 < B_L  \right) \underset{K \to \infty}{\to} 1$.
			\item If Assumption \ref{A4} holds and
			$ K\mu^L \ll \rho_0(K)$,
			then 
			$ \P \left( T_0 < B_L  \right) \underset{K \to \infty}{\to} 1$.
		\end{enumerate}
	\end{theorem}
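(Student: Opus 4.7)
The plan is to compare the dynamics of $X_0$ with that of a monomorphic $0$-population $\tilde{X}_0$ (the logistic birth-death process with parameters $b_0, d_0, c_{00}$), whose extinction time $T_0^{(mono)}$ has mean $1/\rho_0(K)$ by \cite{chazottes2016sharp}. Since the mutation kernel is $m^{(1)}$, an $L$-mutant cannot be born before a type-$1$ mutant, so $B_L \geq \tau_1$, where $\tau_1$ denotes the birth time of the first type-$1$ mutant. The aim is to bound the effective rate of production of $L$-mutants and show this is negligible on the scale of $T_0^{(mono)}$.

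For part (1), up to time $\tau_1$ we couple $X_0$ and $\tilde{X}_0$ so that they coincide. Conditionally on $\tilde{X}_0$, $\tau_1$ is the first point of an inhomogeneous Poisson process of rate $\mu b_0 \tilde{X}_0(t)$, so
\[
\P\bigl(\tau_1 \leq T_0^{(mono)} \bigm| \tilde{X}_0 \bigr) \leq \mu b_0 \int_0^{T_0^{(mono)}} \tilde{X}_0(s)\, ds.
\]
Quasi-stationary estimates for the monomorphic logistic process show that $\tilde{X}_0(s) \leq (1+\eps)\bar{x}_0 K$ on an event of high probability, so the expected integral is bounded by $C K \cdot \E[T_0^{(mono)}] = C K / \rho_0(K)$. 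Taking expectations and applying Markov's inequality yields $\P(\tau_1 \leq T_0^{(mono)}) \leq C K \mu / \rho_0(K) \to 0$. On the complementary event, no mutation ever occurs, so $T_0 = T_0^{(mono)} < B_L$, concluding part (1).

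For part (2), many type-$1$ mutants may appear before extinction, so the key is to show that each has probability only $O(\mu^{L-1})$ of producing an $L$-descendant. Assumption \ref{A4} ensures that each type-$i$ population ($1 \leq i \leq L-1$) is subcritical at the level of its intrinsic rates, and can therefore be dominated from above by an independent linear birth-death process with rates $b_i, d_i$ (the competition term only adds deaths). A standard excursion computation shows that, starting from a single type-$i$ individual, the probability of emitting at least one type-$(i+1)$ mutant during its excursion is $\mu \lambda(\rho_i)(1+o(1))$. Iterating this from $i=1$ to $i=L-1$, a single type-$1$ individual gives rise to a type-$L$ descendant with probability at most $C\mu^{L-1}$. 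Since type-$1$ mutants are emitted at rate $\mu b_0 \tilde{X}_0(t)$, the effective rate of $L$-mutant production is at most $C \mu^L b_0 \tilde{X}_0(t)$; the integration argument of part (1), with $\mu$ replaced by $\mu^L$, then gives $\P(B_L \leq T_0^{(mono)}) \leq C K \mu^L / \rho_0(K) \to 0$. Finally, under Assumption \ref{A4} the remaining subcritical excursions die out in a time of order $\log K \ll 1/\rho_0(K)$ after $T_0^{(mono)}$, so $T_0 \leq T_0^{(mono)} + O(\log K)$ with high probability and $T_0 < B_L$ on the favorable event.

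The main obstacle is the iterated excursion estimate, namely the induction showing that a single type-$i$ individual produces a type-$L$ descendant with probability at most $C\mu^{L-i}$: this requires decomposing the multi-level cascade into independent subcritical excursions (handled by coupling from above with pure birth-death processes, which is valid because competition only augments death rates), and a uniform bound on the mutation-emission probability of such excursions via the $\lambda(\rho_i)$ formula. A subsidiary difficulty is preserving the coupling between $X_0$ and $\tilde{X}_0$ once mutations have occurred in part (2): each mutation depletes $X_0$ by one unit, but the total number of mutations up to $T_0^{(mono)}$ is itself controlled by the same rate-integral computation and is $o(K)$, so the monomorphic extinction-time estimate remains accurate to leading order.
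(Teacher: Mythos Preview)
Your overall strategy is the same as the paper's: dominate $X_0$ by the monomorphic logistic process, bound the production rate of type-$1$ mutants, and (for part~2) feed each such mutant into an iterated subcritical-excursion estimate under Assumption~\ref{A4} to get the factor $O(\mu^{L-1})$. The excursion cascade you describe is exactly what the paper uses (it refers back to \eqref{star}), and your observation that competition only augments death rates is precisely how the domination by pure birth-death processes is justified.

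Where your write-up differs, and where it has a soft spot, is in how you handle the time scale. You integrate up to the random time $T_0^{(mono)}$ and invoke $\E[T_0^{(mono)}]=1/\rho_0(K)$ together with the high-probability bound $\tilde X_0(s)\le (1+\eps)\bar x_0 K$. Two issues: first, the identity $\E_\nu[T_0^{(mono)}]=1/\rho_0(K)$ from \cite{chazottes2016sharp} is stated for the quasi-stationary initial law $\nu$, not for $\tilde X_0(0)=\lfloor\bar x_0 K\rfloor$, so you would need an extra argument to transfer it. Second, the pathwise bound $\tilde X_0(s)\le C K$ holds only on a high-probability event and for $s$ up to an exponential time (Lemma~\ref{Th3cChamp}); on the complementary event the logistic process is unbounded, so you cannot pass directly to the bound $\E\bigl[\int_0^{T_0^{(mono)}}\tilde X_0(s)\,ds\bigr]\le CK\,\E[T_0^{(mono)}]$ without further work.

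The paper sidesteps both issues by introducing a \emph{deterministic} time $v_K$ with $1/\rho_0(K)\ll v_K\ll 1/(K\mu^L)$ (respectively $1/(K\mu)$ for part~1). On one side, the total-variation estimate of \cite{chazottes2016sharp} gives $\P(Y_0(v_K)>0)\to 0$ directly, with no need to know $\E[T_0^{(mono)}]$. On the other side, the unconditional bound $\E[X_0(t)]\le \bar x_0 K$ for all $t\ge 0$ (obtained by a one-line ODE/Jensen argument on $\frac{d}{dt}\E[X_0(t)]$) gives $\E[\Xi_1]\le b_0\bar x_0 K\mu\, v_K$ for the number of type-$1$ mutants born before $v_K$, without any high-probability truncation. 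The paper also phrases the events $\xi_i$ as ``the $i$-th type-$1$ mutant has a type-$L$ descendant at any time in the future'', which absorbs your closing remark about residual subcritical excursions: once $X_0$ is extinct and no $\xi_i$ occurs, $B_L=\infty$ and the remaining types die out automatically.

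A small model point: mutations do not ``deplete $X_0$ by one unit''; a mutational birth from a type-$0$ parent adds a type-$1$ individual and leaves $X_0$ unchanged (the effect on $X_0$ is only through the reduced clonal birth rate $(1-\mu)b_0$). This actually makes the domination $X_0\le Y_0$ cleaner than you suggest: $Y_0$ has strictly larger birth rate $b_0$ and strictly smaller competition, so the coupling holds globally, not just up to $\tau_1$.
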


	{If $ K\mu^L \ll \rho_0(K)$ but} the intermediate mutants are fitter, the pattern is less clear. For instance, one of the 
	intermediate mutants {could} fix before being replaced (or not) by the type $L$ mutant.

	\section{Generalisations}
	
	Our results can be generalised to the following settings:
	\begin{itemize}
		\item If the fitness landscape is such that \emph{coexistence} is allowed between populations of traits $0$ and $L$, i.e.\ if {$f_{L0}>0$} and 
		$f_{0L}>0$, then the analysis of {the} invasion phase is the same, but the fixation phase differs in such a way that traits $0$ and $L$ become macroscopic and stabilise around their common equilibrium $(n^*_0,n^*_L)$, the non trivial fixed point of the 2-species Lotka-Volterra system. Moreover, the unfit mutant populations stay microscopic if we assume $ f_{i,\{0,L\}}:=b_i-d_i-c_{i0}n^*_0-c_{iL}n^*_L<0$, for all $i=1,\ldots,L-1.$ In the 1-sided case, those stay of order $K\mu^i$, while in the 2-sided case, they stay of order $K\mu^{\min\{i,L-i\}}$. There is no complicated decay phase as in Section \ref{par-after-swap-2sided}, and its stochastic analog.
		\item If the mutation {probability} $\mu$ depends on the trait $i$, while still fulfilling the prescribed scalings associated to our different theorems, those still hold.
		\item Consider the biologically relevant case (especially for cancer) where deleterious mutations accumulate until a 
		mutant individual gathers $L$ different mutations, in which case it becomes fit. Each individual bearing $k$ mutations 
		can then be labeled by the trait $k$. The main difference with our setting is that there are now $L!$ ways of reaching 
		an individual of trait $L$ with a sequence of $L$ mutations. Thus, the invasion time of the population $L$ is divided by 
		$L!$ {in the small mutation regime (Theorem \ref{pro_mupetit}) and will stay the same in 
			the large mutation regime (Theorem \ref{pro_phase1_mugrand})}.
	\end{itemize}
	
	\section{Biological context}
	
	The existence of complex phenotypes often involve interactions between different genetic loci. This can lead to cases, where 
	a set of mutations are individually deleterious but in combination
	confer a fitness benefit. To acquire the beneficial genotype, a population must
	cross a fitness valley by first acquiring the deleterious intermediate mutations. Empirical examples of such phenomena  have been found 
	in bacteria \cite{schrag1997adaptation,maisnier2002compensatory} and in viruses \cite{o1984vesicular,giachetti1988altered}, for instance.
	
	To model those phenomena,  several authors considered the case of the sequential fixation of intermediate mutants, as it 
	appeared to be the most likely scenario to get to the fixation of the 
	favorable mutant \cite{wright1965factor,wade1991wright,moore1994simulation}, 
	especially when the population size is small or the mutants neutral or weakly deleterious.

	A scenario where a combination of mutations fixates simultaneously without the prior fixation of one 
	intermediate mutant was first suggested by Gillepsie 
	\cite{gillespie1984molecular}. He observed that the rate of production of fit genotypes is proportional 
	to the population size, and because in the population genetic 
	models the probability of fixation of a beneficial allele is  independent of the population size, he 
	deduced that the expected time for the 
	fixation of the fit mutant decreases as  population size increases. Thus it could be a likely 
	process in the evolution of large populations. This scenario, 
	called stochastic {tunneling} by Iwasa and coauthors \cite{Iwasa2004}, has been widely studied since
	then (see 
	\cite{carter2002evolution,Weinreich2005,Weissman2009,Gokhale2009,haeno2013stochastic} and 
	references therein) by means of constant size population genetic models.
	But the use of such models hampers taking into account several phenomena. 
	
	First, an important 
	question is the lifetime of the population under study. 
	If the mutation {probability} is too small, the population can get extinct before the appearance of the first 
	favourable mutant. Imposing a constant (finite or infinite) 
	population size is thus very restrictive in this respect. 
	In the case of logistic processes that we are studying in this work, the total population size typically remains in the order
	of the carrying capacity $K$ during a time of order $\eee^{KV}$ (with $V$ a positive constant depending on the model's parameters), 
	before getting extinct.
	
	Second, in population genetic models, a fitness is assigned to each type, independently of 
	the population state. In the case of the Moran model, which is used in the series of papers we just 
	mentioned, the probability for a given individual to be picked 
	to replace an individual who dies is proportional to its fitness. If we want to compare our result with this setting, we  have to assume:
	\be
	b_i=b \quad \text{and} \quad |f_{ij}|=|f_{ji}|, \quad \forall 0 \leq i,j \leq L, 
	\ee
	thus restricting the type of fitnesses we could take into account (see Section 3 in \cite{rebekka2015genealogies} for a detailed discussion on this topic).

	Another series of papers
	\cite{Iwasa2004a,serra2006waiting,Serra2007,sagitov2009multitype,alexander2013conditional} focuses 
	on initially large 
	populations doomed for  rapid extinction (for instance cancer cells subject to chemotherapy, or 
	viruses invading a new host while not being adapted to it), 
	except if they manage to accumulate mutations to produce a fit variant (for instance resistant to treatments). 
	The authors use multi-type branching processes. This approach has the advantage 
	to lead to explicit expressions, as the branching property makes the calculations easier, but has two 
	main drawbacks: first it neglects interactions between 
	individuals, whereas it is well known that they are fundamental in processes such as tumor growth; 
	second, 
	branching processes either go to extinction or survive 
	forever with an  exponentially growing size, which is not realistic for biological populations.

	A last point we would like to comment is the possibility of back mutations. 
	They are ignored in all  papers we mentioned, usually accompanied with the argument that they would not have a 
	macroscopic effect on the processes under consideration.
	However, it has been shown that, when the mutation {probabilities} are large enough, scenarios where some
	loci are subject to two successive opposite mutations are likely to 
	be observed (for an example, see \cite{DePristo2007}). 
	This is why we included the possibility of back mutations in the case of high mutation {probabilities} in 
	Section \ref{sec-det}.
	
	\section{Proof of Theorem  \ref{thm-piecewise-constant}}
	
	We give the detailed  proof for $L$ even {and mention the modifications which have to be made  for $L$ odd during the proof.}
	A key step in the proof of Theorem \ref{thm-piecewise-constant} is the  following lemma.

	\begin{lemma}\label{lemma}
		Let $\nsided\in\{1,2\}$,  $(b_0,\ldots,b_{L})\in(\R^+)^{L+1}$, $(\ell_0,\ldots,\ell_{L})\in(\R^+)^{L+1}, (p_0,\ldots,p_L)\in({\R^+})^{L+1}$ and 
		$(f_0,\ldots,f_L)\in\R^{L+1}$ such that 
		\begin{align}
			f_i\neq f_j,\quad &\text{for all} \quad i\neq j. \label{diag}
		\end{align}
		Let
		\begin{equation}\label{def-matrix}
			M_\nsided(\mu,L):=\left(
			\begin{array}{cccccc}
				f_0-b_0\mu&0&0&0&0\\
				\frac\mu\nsided b_0&f_1-b_1\mu&0&0&0\\
				0&\frac\mu\nsided b_1&f_2-b_2\mu&0&0\\
				0&0&\ddots&\ddots&0\\
				0&0&0&\frac\mu\nsided b_{L-1}&f_{L}-b_L\mu
			\end{array}
			\right).
		\end{equation}
		Then the solution to the linear system 
		\begin{equation}\label{triangle-syst}
			\frac {dy}{dt}=M_\nsided(\mu,L)y,
		\end{equation}
		with initial condition 
		\begin{equation}
			y(0)=(\ell_0\mu^{p_0},\ldots,\ell_L\mu^{p_L}) ,
		\end{equation}
		satisfies
		\begin{equation} \label{defmi}
			\lim_{\mu\to0}\frac{\log(y_i(t\log(1/\mu)))}{\log(1/\mu)}=
			-m_i(t):=-\min_{\substack{\gamma,\alpha\in\llbracket 0,L\rrbracket:\\ \ell_\gamma\neq 0,\gamma\leq\alpha\leq i}}\{i-\gamma+p_\gamma-tf_\alpha\} ,
		\end{equation}
		with the convention {$p/0=\infty$, for $p\geq 0$}.
	\end{lemma}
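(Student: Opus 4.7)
The plan is to exploit the lower-bidiagonal structure of $M_\nsided(\mu,L)$ and solve the system component by component via Duhamel's formula, tracking the emerging powers of $\mu$. For $i=0$ the equation is scalar, giving $y_0(t)=\ell_0\mu^{p_0}e^{(f_0-b_0\mu)t}$, so evaluation at $t=s\log(1/\mu)$ delivers the base case. For $i\geq 1$ I would proceed by induction using the Duhamel representation
\begin{equation*}
y_i(t) = \ell_i\mu^{p_i}e^{(f_i-b_i\mu)t} + \frac{\mu b_{i-1}}{\nsided}\int_0^t e^{(f_i-b_i\mu)(t-u)}y_{i-1}(u)\,du,
\end{equation*}
combined with the explicit primitive $\int_0^t e^{a(t-u)}e^{bu}\,du=(e^{bt}-e^{at})/(b-a)$, whose denominators are nonzero thanks to \eqref{diag}. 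Substituting the inductive form for $y_{i-1}$ and integrating yields an analogous decomposition
\begin{equation*}
y_i(t)=\sum_{\gamma\leq\alpha\leq i,\,\ell_\gamma\neq 0}C^{i}_{\gamma,\alpha}(\mu)\,\ell_\gamma\,\mu^{p_\gamma+i-\gamma}\,e^{(f_\alpha-b_\alpha\mu)t},
\end{equation*}
where each coefficient $C^{i}_{\gamma,\alpha}(\mu)$ is rational in $\mu$ and converges as $\mu\to 0$ to an explicit constant built from products of $b_\ell/\nsided$ and reciprocals of the nonzero differences $f_k-f_j$.

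Evaluating at $t=s\log(1/\mu)$, the $(\gamma,\alpha)$-summand has magnitude of order $\mu^{p_\gamma+i-\gamma-sf_\alpha}$ with a bounded prefactor (since $e^{-sb_\alpha\mu\log(1/\mu)}\to 1$ on any compact $s$-interval). The upper bound $\limsup_{\mu\to 0}\log y_i(s\log(1/\mu))/\log(1/\mu)\leq -m_i(s)$ is then immediate and uniform in $s$ by the triangle inequality. For $s$ outside a finite set of breakpoints on any compact interval, the minimum in the definition of $m_i(s)$ is attained by a unique pair $(\gamma^\ast,\alpha^\ast)$; the corresponding summand dominates, and since $C^{i}_{\gamma^\ast,\alpha^\ast}(0)\,\ell_{\gamma^\ast}\neq 0$, the matching lower bound follows. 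At breakpoint values of $s$, either the relevant leading coefficients sum to a nonzero quantity (in which case the same argument applies), or they cancel and a slightly larger $\mu$-exponent takes over; in both cases continuity of the piecewise linear function $m_i$ forces the pointwise limit to equal $-m_i(s)$.

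The main obstacle is the bookkeeping of the induction. One must verify that the exponent of $\mu$ increments from $p_\gamma+i-1-\gamma$ to $p_\gamma+i-\gamma$ at each step (provided by the prefactor $\mu b_{i-1}/\nsided$), that the set of admissible indices $\alpha$ extends from $\{\gamma,\ldots,i-1\}$ to $\{\gamma,\ldots,i\}$ (provided by the new $e^{(f_i-b_i\mu)t}$ term produced by the integral), and that the leading-order coefficients are nonzero—which, although they involve sums that could a priori cancel when $\alpha=i$, simplify by telescoping the difference of reciprocals into products guaranteed to be nonzero by \eqref{diag} together with $b_\ell>0$. The convention $p/0=\infty$ simply encodes that traits with $\ell_\gamma=0$ do not contribute to $m_i$, consistent with the restriction $\ell_\gamma\neq 0$ in the summation above, and uniformity on bounded time intervals follows from the continuity of $m_i$ and the uniformity of the $o(1)$ error terms.
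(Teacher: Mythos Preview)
Your approach is correct and arrives at the same decomposition of $y_i(t)$ as a sum of exponentials indexed by pairs $(\gamma,\alpha)$ with $\gamma\leq\alpha\leq i$. The paper takes a different route: rather than inducting via Duhamel, it diagonalises $M_\nsided$ explicitly, writing down closed formulas for the lower-triangular change-of-basis matrices $S$ and $S^{-1}$ (whose entries are products of the $b_\ell$ and of the differences $f_j-f_k+\mu(b_k-b_j)$). This yields in one stroke the identity
\[
y_i(t\log(1/\mu))=\sum_{\gamma:\ell_\gamma\neq0}\sum_{\gamma\leq\alpha\leq i}\mu^{\,i-\gamma+p_\gamma-t(f_\alpha-b_\alpha\mu)}\,C_{i\alpha}C'_{\alpha\gamma}\ell_\gamma,
\]
with the advantage that each coefficient $C_{i\alpha}C'_{\alpha\gamma}$ is manifestly a nonzero product, so the non-vanishing of the leading term---the issue you flag as the main bookkeeping obstacle---comes for free. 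Your inductive route is more elementary in that it does not require guessing $S$ and $S^{-1}$, but it trades this for the telescoping verification you describe; in effect you are rederiving the entries of $S$ and $S^{-1}$ one row at a time. Your handling of the breakpoints where several $(\gamma,\alpha)$ share the minimal exponent is, if anything, more careful than the paper's, which simply asserts dominance of the smallest-exponent term without discussing ties.
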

	
	{Note that Assumption \ref{diag} intuitively ensures that contributions coming from mutants of different traits are different (when computing the growth or decrease rate of a given trait). It is then clear which one wins in Equation \eqref{defmi}. If this assumption does not hold,  it could happen that prefactors (in front of powers of $\mu$) matter, 
		and we do not want to enter into such an analysis.  Mathematically, it ensures the matrix $M_\nsided$ in \eqref{def-matrix} to be diagonalisable for $\mu$ small enough, and thus to obtain explicit expressions for change of basis matrices in the proof below. }
	
	\begin{proof}
		Under assumption \eqref{diag} the matrix $M_\nsided$ in \eqref{def-matrix} is diagonalisable for $\mu$ small enough:  it can be checked that
		$M_\nsided=SDS^{-1}$ with 
		\begin{align}
			D&=((f_i-b_i\mu)\delta_{ij})_{0\leq i,j \leq L},\\
			S&=\left(
			\left(\frac\nsided\mu\right)^{L-i}
			\frac{\prod_{k=i+1}^{L} (f_j-f_k)+\mu(b_k-b_j)}{\prod_{\ell=i}^{L-1} b_\ell}
			\ind{i\geq j}
			\right)_{0 \leq i,j \leq L}=:\left(\mu^{i-L}C_{ij}\ind{i\geq j}\right)_{0 \leq i,j \leq L}, \\
			S^{-1}&=\left( 
			\left(\frac\mu \nsided\right)^{L-j}
			\frac{\prod_{\ell=j}^{L-1} b_\ell}{\prod_{k=j,k\neq i}^{L} (f_i-f_k)+\mu(b_k-b_i)}
			\ind{i\geq j}
			\right)_{0 \leq i,j \leq L}=:\left(\mu^{L-j}C'_{ij}\ind{i\geq j}\right)_{0 \leq i,j \leq L}.
		\end{align}
		The solution to the system \eqref{triangle-syst} can then be written in the form 
		\be
		y(t)=\exp(tM_\nsided)y(0)=S\exp(tD)S^{-1}y(0),
		\ee
		which reads in coordinates, for $i=0,\dots, L$, 
		\begin{align}
			y_i(t)&=\sum_{\alpha,\gamma=0}^L
			S_{i\alpha}\eee^{t(f_\alpha-b_\alpha\mu)}S^{-1}_{\alpha\gamma}\ell_\gamma\mu^{p_\gamma}
			=\sum_{\gamma:\ell_\gamma\neq0}\sum_{\gamma\leq\alpha\leq i}
			\mu^{i-\gamma+p_\gamma}\eee^{t(f_\alpha-b_\alpha\mu)}\cdot
			C_{i\alpha}C'_{\alpha\gamma}\ell_\gamma.
		\end{align}
		Thus
		\begin{align}\label{pre-pow-mu}
			y_i(t\log(1/\mu))&=\sum_{\gamma:\ell_\gamma\neq0}\sum_{\gamma\leq\alpha\leq i}
			\mu^{i-\gamma+p_\gamma-t(f_\alpha-b_\alpha\mu)}\cdot
			\left(C_{i\alpha}C'_{\alpha\gamma}\ell_\gamma\right).
		\end{align}
		As $\mu$ tends to zero, the sum is dominated by the  term with the smallest exponent of $\mu$, which by
		definition is  $m_i(t)$, defined in \eqref{defmi}.
		Thus there exists a constant $C>0$, such that 
		\begin{equation}\label{pow-mu}
			y_i(t\log(1/\mu))
			=C \mu^{m_i(t)}\left(1+o(1)\right),
		\end{equation}
		which implies the assertion of \eqref{defmi} and concludes the proof of the lemma.
	\end{proof}

	\subsection{Before the swap}
	\subsubsection{Time interval $0\leq t\leq \Tinit_{L-1}$}
	{If $m_{ij}=m^{(2)}_{ij}$ let $\zeta=2$ and }
	\begin{align}\nonumber
		\tau^-_{L-1}(\ep,\mu)&=
		\inf\{t:\exists \ i\in\Ninterval{0}{L} \text{ s.t.\ } \xmu_i(t)>\mu^{i-\ep}\}\wedge
		\inf\{t:|\xmu_0(t)-\bar x_0|>\ep \}
		\wedge
		\inf\{t:\xmu_L(t)>\mu^{L-2+\ep}\},\\
	\end{align}
	{ while if  $m_{ij}=m^{(1)}_{ij}$ let $\zeta=1$ and}
	\begin{align}\nonumber
		\tau^-_{L-1}(\ep,\mu)&=
		\inf\{t:\exists \ i\in\Ninterval{0}{L} \text{ s.t.\ } \xmu_i(t)>\mu^{i-\ep}\}
		\wedge
		\inf\{t:|\xmu_0(t)-\bar x_0|>\ep \}\wedge
		\inf\{t:\xmu_L(t)>\ep\}.\\
	\end{align}
	{
		and define
		\begin{equation}
			T^-_{L-1}:=\lim_{\ep\to0}\lim_{\mu\to0}\frac{\tau^-_{L-1}(\ep,\mu)}{\log(1/\mu)}.
		\end{equation}
	}
	There exists a finite $C$ such that on the time interval $[0,\tau^-_{L-1}(\ep,\mu)]$, {for $i\in\llbracket 0,L\rrbracket$,}
	\begin{equation}
		\frac{d\xmu_i}{dt}\geq (f_{i0}-C\ep)\xmu_i+\mu\left(\frac{b_{i-1}}{\nsided}\xmu_{i-1}-b_i\xmu_i\right).
	\end{equation}
	Hence, by the Gronwall lemma, 
	$\xmu$ is  bigger than the solution to $\frac{dy}{dt}= M_\nsided y$ with $f_i=f_{i0}-C\ep$. Applying Lemma \ref{lemma} with 
	$y(0)=(\xbar_0,0,\ldots,0)$
	and thus $\ell_0=\xbar_0$, $p_0=0$, $\ell_i=0$ for $i\neq0$, we get, using \eqref{A1}, for  $\ep$ small enough {and $t>0$,}
	\begin{equation}
		\lim_{\mu\to0}\frac{\log(\xmu_i(t\log(1/\mu)))}{\log(1/\mu)}\geq \begin{cases}
			-i	{	-C\eps t,	}		&\text{ for }i=0,\ldots,L-1,\\
			-L+t(f_{L0}-C\ep),	&\text{ for }i=L.
		\end{cases}
	\end{equation}
	On the other hand,  on the same time interval, we have, for some positive $C$, the upper bound
	\begin{equation}
		\frac{d\xmu_i}{dt}\leq (f_{i0}+C\ep)\xmu_i+\mu\left(\frac{b_{i-1}}{{\nsided}}\xmu_{i-1}-b_i\xmu_i\right)+E_i,
	\end{equation}
	where,  until  $\tau^-_{L-1}$, with  $\kappa:= \sup b_i/2$, 
	\be
	(E,\nsided)=\begin{cases}\left(\kappa\mu\cdot(\mu^{1-\ep},\mu^{2-\ep},\ldots,{\mu^{L-1-\ep}},\mu^{L-2+\ep},0),2\right)&,\hbox{\rm if }\;  m_{ij}=m^{(2)}_{ij}, \\
		\left((0,0,\ldots,0),1\right),  \hbox{\rm  if } m_{ij}=m^{(1)}_{ij}.
	\end{cases}
	\ee
	Again by  the Gronwall lemma, 
	$\xmu$ is  smaller than the solution to $\frac{dy}{dt}= {M_\nsided} y+E$, where the $f_i$ in $M_{\nsided}$ are given by $f_i=f_{i0}+C\ep$. 
	The variation of parameters method yields
	\begin{align}\nonumber
		y(t)&=\eee^{tM_\nsided}\left(y(0)+\left(\int_{0}^{t}\eee^{-sM_\nsided}ds\right) E\right)\\\nonumber
		&=\eee^{tM_\nsided}y(0)+S\left(\int_{0}^{t}\eee^{(t-s)D}ds\right)S^{-1} E\\
		&= \eee^{tM_\nsided}y(0)
		+S\left(\frac{\eee^{(f_{i}-b_i\mu)t}}{f_i-b_i\mu}\delta_{ij}\right)S^{-1} E
		-S\left(\frac{1}{f_i-b_i\mu}\delta_{ij}\right)S^{-1} E \label{err}.
	\end{align}
	Now we compute the order of magnitude of each term as in \eqref{pre-pow-mu} in the proof of Lemma 
	\ref{lemma} and show that the two terms in \eqref{err}  involving $E$ are negligible with respect to the 
	main term. Set
	\begin{equation}
		e_1(t):=S\left(\eee^{(f_{i}-b_i\mu)t}\delta_{ij}\right)S^{-1} E,\qquad
		e_2:=S\left(\delta_{ij}\right)S^{-1} E.
	\end{equation}
	In the case $m_{ij}=m^{(2)}_{ij}$ we have, for $i \neq L$, from Lemma \ref{lemma} that
	\begin{align}\nonumber
		\left(e_1(t\log(1/\mu))\vee e_2\right)_i
		&=O(\mu^{\min_{\gamma\in\llbracket 0,L-1\rrbracket,\gamma\leq i}\{i-\gamma+ (2+\gamma-\ep)
			\ind{\gamma<L-1}+\{i-\gamma+(\gamma+\ep)\}\ind{\gamma=L-1}\}})\\
		&=O(\mu^{(i+\ep)\ind{i=L-1}+(i+2-\ep)\ind{i<L-1}})=o(\mu^i),
	\end{align}
	and if $i=L$ we get 
	\begin{align}
		\left(e_1(t\log(1/\mu))\vee e_2\right)_L
		=O(\mu^{L-t(f_{L0}+C\ep)+\ep})=o(\mu^{L-t(f_{L0}+C\ep)}).
	\end{align}
	Consequently, proceeding as for the lower bounding ODE, we get
	\begin{align}\nonumber
		\lim_{\mu\to0}\frac{\log(\xmu_i(t\log(1/\mu)))}{\log(1/\mu)}
		&\leq
		\lim_{\mu\to0}\frac{\log(y_i(t\log(1/\mu)))}{\log(1/\mu)}\\
		&=\begin{cases}
			-i		{+C\eps t}	,				&\text{ for }i=0,\ldots,L-1,\\
			-L+t(f_{L0}+C\ep),			&\text{ for }i=L.
		\end{cases}
	\end{align}
	Finally observe that, as the only growing population is the one with trait $L$,
	\begin{equation}
		T^-_{L-1}=\lim_{\ep\to0}\lim_{\mu\to0}\frac{\tau^-_{L-1}(\ep,\mu)}{\log(1/\mu)}
		=\begin{cases}
			2/{f_{L0}},				&\text{ for }m_{ij}=m^{(2)}_{ij},\\
			L/{f_{L0}},			&\text{ for }m_{ij}=m^{(1)}_{ij}.
		\end{cases}
	\end{equation}
	In the case $m_{ij}=m^{(1)}_{ij}$ the proof continues directly with  Subsection \ref{sec-swap}.

	\subsubsection{Time interval ${\Tinit_{L-1}}\leq t\leq {\Tinit_{L-2}}$}
	Let $m_{ij}=m^{(2)}_{ij}$ and
	\begin{align}
		\tau^-_{L-2}(\ep,\mu)&=
		\inf\{t:\exists i\in\Ninterval{0}{L-1} \text{ s.t. } \xmu_i(t)>\mu^{i-\ep}\}\\\nonumber
		&\wedge
		\inf\{t:|\xmu_0(t)-\bar x_0|>\ep \}
		\wedge
		\inf\{t:\xmu_{L-1}(t)>\mu^{L-3+\ep}\}
		\wedge
		\inf\{t:\xmu_L(t)>\mu^{L-4+\ep}\}.
	\end{align}
	{and define 
		\begin{equation}
			\Tinit_{L-2}:=\lim_{\ep\to0}\lim_{\mu\to0}\frac{\tau^-_{L-2}(\ep,\mu)}{\log(1/\mu)}.
	\end{equation}}
	There exists a positive $C$ such that on the time interval $[\tau^-_{L-1}(\eps,\mu),\tau^-_{L-2}(\eps,\mu)]$,
	\begin{equation}
		\frac{d\xmu_i}{dt}\geq 
		(f_{i0}-C\ep)\xmu_i+
		\mu\left(
		\frac{b_{i-1}}{2}\xmu_{i-1}\ind{i<L-1}
		+\frac{b_{i+1}}{2}\xmu_{i+1}\ind{i=L-1}
		-b_i\xmu_i
		\right).
	\end{equation}
	
	Hence by the Gronwall lemma, and notations \ref{triangle-syst}, $\xmu$ is bigger than the solution to 
	\begin{equation}
		\frac{dy}{dt}=
		\begin{pmatrix}
			M_{\mathrm{left}}(L-2) & 0\\
			0		 & M_{\mathrm{right}} (1)
		\end{pmatrix}
		y=:M'(L-2,1)y
	\end{equation}
	where $M_{\mathrm{left}}(L-2)=M_2(L-2)$ with $f_i=f_{i0}-C\ep$ and 
	\begin{equation}
		M_{\mathrm{right}}=\begin{pmatrix}
			f_{L-1,0}-C\ep & \frac\mu2 b_L\\
			0			   & f_{L0}-C\ep
		\end{pmatrix}.
	\end{equation}
	Applying then twice Lemma \ref{lemma}, once with $M_{\mathrm{left}}(L-2)$ and $y_{\mathrm{left}}=(y_0,\ldots,y_{L-2})$ and once with $M_{\mathrm{right}}(1)$ 
	(treated as $M(1)$ with ``{reversed indices}",  i.e. $f_i,b_i$ replaced by $f_{L-i},b_{L-i}$) and $y_{\mathrm{right}}=(y_{L-1},y_L)$, with 
	\begin{equation}
		y(0)=(\xbar_0,\mu,\mu^2,\ldots,\mu^{L-1},\mu^{L-2}),
	\end{equation}
	{up to $o_\ep(1)$ terms in the powers of $\mu$ due to the range of possible initial conditions coming from the previous phase (those however do not change anything to the calculations),}
	we get
	\begin{equation}\label{LB1}
		\lim_{\mu\to0}\frac{\log(\xmu_i(t\log(1/\mu)))}{\log(1/\mu)}\geq \begin{cases}
			-i			{-C\eps t},				&\text{ for }i=0,\ldots,L-2,\\
			-(L-1)+t(f_{L0}-C\ep),	&\text{ for }i=L-1,\\
			-(L-2)+t(f_{L0}-C\ep),	&\text{ for }i=L.
		\end{cases}
	\end{equation}
	On the other hand, we have the upper bound
	\begin{equation}
		\frac{d\xmu_i}{dt}\leq 
		(f_{i0}+C\ep)\xmu_i+
		\mu\left(
		\frac{b_{i-1}}{2}\xmu_{i-1}\ind{i<L-1}
		+\frac{b_{i+1}}{2}\xmu_{i+1}\ind{i=L-1}
		-b_i\xmu_i
		\right)+E_i
	\end{equation}
	where until $\tau^-_{L-2}$ we have
	\begin{equation}
		E=\mu\cdot(\mu^{1-\ep},\mu^{2-\ep},\ldots,\mu^{L-2-\ep},\mu^{L-3+\ep},
		\mu^{L-2-\ep},\mu^{L-3+\ep}).
	\end{equation} 
	By the Gronwall lemma, $\xmu$ is smaller than the solution to
	$\frac{dy}{dt}= M' y+E$ with $f_i=f_{i0}+C\ep$. Using the same method as above (variation of  
	constants
	in the two blocks), we get \eqref{LB1} also as an upper bound,  with $f_{L0}-C\ep$ replaced by $f_{L0}+C\ep$, {and $-i-C\eps t$ replaced by $-i+C\eps t$}. 
	Finally observe that, 
	\begin{equation}
		\Tinit_{L-2}=\lim_{\ep\to0}\lim_{\mu\to0}\frac{\tau^-_{L-2}(\ep,\mu)}{\log(1/\mu)}=\frac4{f_{L0}}.
	\end{equation}

	\subsubsection{Induction until ${\Tinit_{ L/2}}$}
	{In this section if $L$ is odd, then $L/2$ has to be replaced by $\lfloor L/2\rfloor$.}
	For $k\in\{3,\ldots,L/2\}$ we treat the time interval $\Tinit_{L-k}\leq t\leq \Tinit_{L-(k+1)}$. Let $m_{ij}=m^{(2)}_{ij}$ and
	\begin{align}\nonumber
		\tau^-_{L-k}(\ep,\mu)&=
		\inf\{t:\exists i\in\Ninterval{0}{L-k+1} \text{ st } \xmu_i(t)>\mu^{i-\ep}\}\\
		&\wedge
		\inf\{t:|\xmu_0(t)-\bar x_0|>\ep \}
		\wedge
		\inf\{t: \exists j\in\Ninterval{1}{k}\text{ st } \xmu_{L-k+j}(t)>\mu^{(L-k+j)-2j+\ep}\}.
	\end{align}
	{ and define 
		\begin{equation}
			\Tinit_{L-k}:=\lim_{\ep\to0}\lim_{\mu\to0}\frac{\tau^-_{L-k}(\ep,\mu)}{\log(1/\mu)}.
		\end{equation}
	}
	{For $t\in[\tau^-_{L-k}(\ep,\mu),\tau^-_{L-k-1}(\ep,\mu)]$ } we have the lower bound 
	\begin{equation}
		\frac{d\xmu_i}{dt}\geq 
		(f_{i0}-C\ep)\xmu_i+
		\mu\left(
		\frac{b_{i-1}}{2}\xmu_{i-1}\ind{i<L-k+1}
		+\frac{b_{i+1}}{2}\xmu_{i+1}\ind{i\geq L-k+1}
		-b_i\xmu_i
		\right).
	\end{equation}
	Hence, by the Gronwall lemma, $\xmu$ is bigger than the solution to 
	\begin{equation}
		\frac{dy}{dt}=
		\begin{pmatrix}
			M_{\mathrm{left}}(L-k) & 0\\
			0		 & M_{\mathrm{right}} (k-1)
		\end{pmatrix}
		y=:M'(L-k,k-1)y,
	\end{equation}
	where $M_{\mathrm{left}}(L-k)=M_2(L-k)$ with $f_{i}=f_{i0}-C\ep$ and 
	\begin{equation}
		M_{\mathrm{right}}(k)=\begin{pmatrix}
			f_{L-k,0}-C\ep 	& \frac\mu2 b_{L-k+1}	& 			& 			0\\
			&\ddots					& \ddots 	&\\
			&						& f_{L-1,0}-C\ep	& \frac\mu2 b_{L}\\
			0			   	& 						&					& f_{L0}-C\ep
		\end{pmatrix}.
	\end{equation}
	
	Applying twice Lemma \ref{lemma}, once with $M_{\mathrm{left}}$ and $y_{\mathrm{left}}=(y_0,\ldots,y_{L-k-1})$ and once with $M_{\mathrm{right}}$ 
	(treated as $M(k)$ with ``{reversed indices}", $f_i,b_i$ replaced by $f_{L-i},b_{L-i}$) and $y_{\mathrm{right}}=(y_{L-k},\ldots,y_L)$, with
	\begin{equation}
		y(0)=(\xbar_0,\mu,\mu^2,\ldots,\mu^{L-k},\mu^{L-k+1},\mu^{L-k},\mu^{L-k-1},\ldots,\mu^{L-2k}),
	\end{equation}
	{up to $o_\ep(1)$ terms in the powers of $\mu$ due to the range of possible initial conditions coming from the previous phase,}
	we get
	\begin{equation}\label{LB2}
		\lim_{\mu\to0}\frac{\log(\xmu_i(t\log(1/\mu)))}{\log(1/\mu)}\geq \begin{cases}
			-i		{-C\eps t},					&\text{ for }i=0,\ldots,L-k,\\
			{-i+j-1+t(f_{L0}-C\ep)},	&\text{ for }i=L-k+j,j=1,\ldots,k.
		\end{cases}
	\end{equation}
	On the other hand, we have the upper bound
	\begin{equation}
		\frac{d\xmu_i}{dt}\leq 
		(f_{i0}+C\ep)\xmu_i+
		\mu\left(
		\frac{b_{i-1}}{2}\xmu_{i-1}\ind{i<L-k+1}
		+\frac{b_{i+1}}{2}\xmu_{i+1}\ind{i\geq L-k+1}
		-b_i\xmu_i
		\right)+E_i,
	\end{equation}
	where on the time interval $[\tau^-_{L-k}(\ep,\mu),\tau^-_{L-k-1}(\ep,\mu)]$ we have
	\begin{equation}
		E=\mu\cdot(\mu^{1-\ep},\mu^{2-\ep},\ldots,\mu^{L-k-\ep},\mu^{L-k-1+\ep},\mu^{L-k-\ep},\mu^{L-k-1+\ep},\mu^{L-k-2+\ep},\ldots,\mu^{L-1-2(k-1)+\ep}).
	\end{equation} 
	By the Gronwall lemma, $\xmu$ is thus smaller than the
	solution to $\frac{dy}{dt}= M' y+E$ with $f_i=f_{i0}+C\ep$. 
	Using the same method as above (variation of the constant in the two blocks), we get \eqref{LB2} 
	also as an upper bound, with $f_{L0}-C\ep$ replaced by $f_{L0}+C\ep$, {and $-i-C\eps t$ replaced by $-i+C\eps t$.}
	Finally, observe that
	\begin{equation}
		\Tinit_{L-k}=\lim_{\ep\to0}\lim_{\mu\to0}\frac{\tau^-_{L-k}(\ep,\mu)}{\log(1/\mu)}=\frac{2k}{f_{L0}}.
	\end{equation}
	
	\subsection{The swap}\label{sec-swap}
	
	Let $m_{ij}=m^{(2)}_{ij}$ and
	\begin{align}\nonumber
		\tau^s(\ep,\mu)&=
		\inf\{t:\exists i\in\Ninterval{0}{L/2} \text{ s.t. } \xmu_i(t)>\mu^{i-\ep}\}
		\wedge
		\inf\{t:\xmu_0(t)<\ep \}\\
		&\wedge
		\inf\{t:\exists i\in\Ninterval{L/2}{L} \text{ s.t. } \xmu_i(t)>\mu^{L-i-\ep}\}
		\wedge
		\inf\{t:\xmu_L(t)>\xbar_L-\ep \},
	\end{align}
	
	or $m_{ij}=m^{(1)}_{ij}$ and
	\begin{align}\nonumber
		\tau^s(\ep,\mu)&=
		\inf\{t:\exists i\in\Ninterval{0}{L-1} \text{ s.t. } \xmu_i(t)>\mu^{i-\ep}\}\\
		&\wedge
		\inf\{t:\xmu_0(t)<\ep \}
		\wedge
		\inf\{t:\xmu_L(t)>\xbar_L-\ep \}.
	\end{align}
	
	{For $t\in [\tau^-_{L/2}(\ep,\mu),\tau^s(\ep,\mu)]$ and} $\chi \in \{0,L\}$ we have the lower bounds 
	\begin{align}\label{2dLV}
		\frac{d\xmu_\chi}{dt}&\geq
		\left(b_\chi-d_\chi-c_{\chi0}\xmu_0-c_{\chi L}\xmu_L-C\mu\right)x_\chi^\mu-C'\mu^{1-\ep} 
	\end{align}
	and the upper bounds 
	\begin{align}\label{2dLV-upper}
		\frac{d\xmu_\chi}{dt}&\leq
		\left(b_\chi-d_\chi-c_{\chi 0}\xmu_0-c_{\chi L}\xmu_L\right)x_\chi^\mu+C'\mu^{1-\ep} .
	\end{align}
	{Let {$(\tilde x_0,\tilde x_L)$} denote the solution of the unperturbed system, i.e. of 
		{\begin{align}\label{2dLV-unperturbed}
				\frac{dx_\chi}{dt}&=
				\left(b_\chi-d_\chi-c_{\chi 0}x_0-c_{\chi L}x_L\right)x_\chi.
			\end{align}
		}
		By \eqref{A1} and \eqref{A2}, we know that this system has a
		unique stable equilibrium $(0,\xbar_L)$. 
		Moreover, the time needed
		to enter an $\ep-$neighbourhood of this equilibrium from  initial conditions $(\xbar_0-\ep,\ep)$ 
		is of order $O(1)$. 
		Applying the Gronwall lemma {to the function $|x_0^\mu(t)-\tilde x_0(t)|^2+|x_L^\mu(t)-\tilde x_L(t)|^2$}, \eqref{2dLV} and \eqref{2dLV-upper} imply that on any compact time interval 
		$(\xmu_0,\xmu_L)\to {(\tilde x_0,\tilde x_L)}$ { as $\mu\downarrow0$}. Moreover, for all $\mu$ small enough, the 
		system $(\xmu_0,\xmu_L)$ has a stable equilibrium that converges to $(0,\xbar_L)$,
		as $\mu\downarrow0$. }
	
	
	For the populations $(\xmu_1,\ldots,\xmu_{L-1})$, we have,
	{for $t\in [\tau^-_{ L/2}(\ep,\mu),\tau^s(\ep,\mu)]$, } the lower bounds 
	\begin{align}
		\frac{dx_i^\mu}{dt}
		&\geq	\left(b_i-d_i-c_{i0}\xbar_{0}-c_{iL}\xbar_L-C\ep\right)x_i^\mu+
		\mu\left(
		\frac{b_{i-1}}{2}\xmu_{i-1}\ind{i<L/2}
		+\frac{b_{i+1}}{2}\xmu_{i+1}\ind{i\geq L/2}
		-b_i\xmu_i
		\right),
	\end{align}
	in the case $m_{ij}=m^{(2)}_{ij}$ and 
	\begin{align}
		\frac{dx_i^\mu}{dt}
		&\geq	\left(b_i-d_i-c_{i0}\xbar_{0}-c_{iL}\xbar_L-C\ep\right)x_i^\mu+
		\mu\left(
		{b_{i-1}}\xmu_{i-1}
		-b_i\xmu_i
		\right),
	\end{align}
	in the case $m_{ij}=m^{(1)}_{ij}$.
	We have decoupled traits $0$ and $L$ from traits $1,\ldots,L-1$. 
	We still have to show that the functions {$(x_i^\mu, i\in\Ninterval{1}{L-1})$}  stay smaller than $\mu^{1-\ep}$.
	By the Gronwall lemma, the following {holds:}
	\begin{enumerate}
		\item in the case $m_{ij}=m^{(1)}_{ij}$, the solution $(\xmu_1,\ldots,\xmu_{L-1})$ is smaller than the solution to 
		\begin{equation}
			\frac{dy}{dt}=
			M_1(L-2)y
		\end{equation}
		with $f_i=b_i-d_i-c_{i0}\xbar_{0}-c_{iL}\xbar_L-C\ep<0$ and initial conditions:
		\begin{equation}
			y(0)=(\mu,\mu^2,\ldots,\mu^{L-1}).
		\end{equation}
		{up to $o_\ep(1)$ terms in the powers of $\mu$ due to the range of possible initial conditions coming from the previous phase.}
		Applying Lemma \ref{lemma}, we get
		\begin{equation}
			\lim_{\mu\to0}\frac{\log(\xmu_i(t\log(1/\mu)))}{\log(1/\mu)}\geq 
			-i {+ t \sup_{1\leq \alpha \leq i}f_\alpha}, \quad	\text{ for }i=1,\ldots,L-1.
		\end{equation}
		But we just mentioned that the swap has a duration of order $1$. Thus, the $t$ to be considered is negligible with respect to $1$, and
		\begin{equation}
			\lim_{t\to0}\lim_{\mu\to0}\frac{\log(\xmu_i(t\log(1/\mu)))}{\log(1/\mu)}\geq 
			-i, \quad	\text{ for }i=1,\ldots,L-1.
		\end{equation}
		\item in the case $m_{ij}=m^{(2)}_{ij}$, the solution $(\xmu_1,\ldots,\xmu_{L-1})$ is thus smaller than the solution to 
		\begin{equation}
			\frac{dy}{dt}=
			\begin{pmatrix}
				M_{\mathrm{left}}(L/2-2) & 0\\
				0		 & M_{\mathrm{right}} (L/2-1)
			\end{pmatrix}
			y=:M'(L/2-2,L/2-1)y
		\end{equation}
		with $f_i=b_i-d_i-c_{i0}\xbar_{0}-c_{iL}\xbar_L-C\ep<0$ and initial conditions
		\begin{equation}
			y(0)=(\mu,\mu^2,\ldots,\mu^{L/2+1},\mu^{L/2},\mu^{L/2-1}\ldots,\mu)
		\end{equation}
		{up to $o_\ep(1)$ terms in the powers of $\mu$ due to the range of possible initial conditions coming from the previous phase.}
		{Here, if $L$ is odd, then the initial condition has to be replaced by
			\begin{equation}
				y(0)=(\mu,\mu^2,\ldots,\mu^{\lfloor L/2\rfloor},\mu^{\lfloor L/2\rfloor},\mu^{\lfloor L/2\rfloor-1}\ldots,\mu).
			\end{equation}	
			and matrix dimensions have to be modified accordingly, but the proof stays the same.
		}
		Applying  Lemma \ref{lemma} twice (in the two blocks), and letting $t$ go to $0$ as the swap has a duration of order $1$, we get
		\begin{equation}
			{\lim_{t \to 0}}\lim_{\mu\to0}\frac{\log(\xmu_i(t\log(1/\mu)))}{\log(1/\mu)}\geq 
			-\min\{i,{L-i}\},	\quad	\text{ for }i=1,\ldots,L-1.
		\end{equation}
	\end{enumerate}

	On the other hand, we have the  upper bound
	\begin{equation}
		\frac{dx_i^\mu}{dt}\leq
		F\xmu_i+\mu^{1-\ep},
	\end{equation}
	with some $F>0$. Thus, by the Gronwall lemma, 
	\begin{equation}
		\xmu_i(t)\leq  \mu^{i-\ep},\text{ for }t<\frac{\ep}{F}\log(1/\mu) \text{  and for }i=1,\ldots L/2.
	\end{equation}
	and similarly for $i=L/2,\ldots,L-1$ (no population changes its order of magnitude of more than $\ep$ during any time of order $O(1)$).
	We deduce that, for $i=1,\ldots,L-1$,
	\begin{equation}
		\lim_{\mu\to0}\frac{\log(\xmu_i(t\log(1/\mu)))}{\log(1/\mu)}\leq 
		\begin{cases}
			-\min\{i,{L-i}\} & \text{ if } m_{ij}=m^{(2)}_{ij}\\
			-i & \text{ if } m_{ij}=m^{(1)}_{ij}.
		\end{cases}
	\end{equation}
	The duration of  the swap  vanishes (on the time scale $\log(1/\mu)$) in the limit $\mu\to0$. We thus have $\Tswap=\Tinit_{L/2}$.

	\subsection{After the swap}
	\subsubsection{Case $m_{ij}=m^{(1)}_{ij}$}
	Let
	\begin{equation}
		\tau^+(\ep,\mu)=
		\inf\{t:\exists \ {i\in\Ninterval{1}{L-1} \text{ s.t.\ } \xmu_i(t)>\mu^{i-2\ep}}\}
		\wedge
		\inf\{t:|\xmu_L(t)-\bar x_L|>\ep\}.
	\end{equation}
	{For $t\in [\tau^s(\ep,\mu),\tau^+(\ep,\mu)]$, } we have the lower bound 
	\begin{equation}
		\frac{d\xmu_i}{dt}\geq (f_{iL}-C\ep)\xmu_i+\mu\left({b_{i-1}}\xmu_{i-1}-b_i\xmu_i\right).
	\end{equation}
	Hence, by the Gronwall lemma, and notations \eqref{triangle-syst}, $\xmu$ is bigger than the solution to
	$\frac{dy}{dt}= M_1 y$ with $f_i=f_{iL}-C\ep$. Applying Lemma \ref{lemma} with 
	\begin{equation}
		y(0)=(\ep,\mu,\ldots,\mu^{L-1},\bar x_L-\ep),
	\end{equation}
	{up to $o_\ep(1)$ terms in the powers of $\mu$ due to the range of possible initial conditions coming from the previous phase}
	(and thus $p_i=i-L\delta_{i,L}$), we get, using Assumption \ref{ass.1},
	\begin{align}\nonumber
		\lim_{\mu\to0}\frac{\log(\xmu_i(t\log(1/\mu)))}{\log(1/\mu)}&\geq
		-\min_{\alpha\leq i}\{i-L\delta_{i,L}-t(f_{\alpha L}-C\ep)\}\\
		&=-i+L\delta_{i,L}-t\min_{\alpha\in\Ninterval{0}{i}} |f_{\alpha L}|{+tC\ep}.
	\end{align}
	In the same way we get the corresponding  upper bound with $f_{\alpha L}-C\ep$ replaced by $f_{\alpha L}+C\ep$.

	\subsubsection{Case $m_{ij}=m^{(2)}_{ij}$}\label{par-after-swap-2sided}
	In this phase the system cannot be approximated by a piece-wise block-triangular linear system anymore. Let us study the ODE followed by the rescaled process. 
	Let
	\begin{align}
		\tau^+(\ep,\mu)&=
		\inf\{t>\Tswap:\exists i\in\Ninterval{0}{L-1}: \xmu_i(t)>\ep\}
		\wedge
		\inf\{{t:|\xmu_L(t)-\bar x_L|>\ep} \}.
	\end{align}

	{For $t\in [\tau^s(\ep,\mu),\tau^+(\ep,\mu)]$ } we have the lower bound 
	\begin{equation}
		\frac{d\xmu_i}{dt}\geq 
		(f_{iL}-C\ep)\xmu_i+
		\mu\left(
		\frac{b_{i-1}}{2}\xmu_{i-1}
		+\frac{b_{i+1}}{2}\xmu_{i+1}
		-b_i\xmu_i
		\right).
	\end{equation}
	and a similar upper bound where $f_{iL}-C\ep$ is replaced by $f_{iL}+C\ep$.
	Let
	\begin{equation}
		\xtep_i:=	\frac{\log\left[x_i^{\mu}\left(t\cdot\log\left(\frac1\mu\right)\right)\right]}{\log(\frac1\mu)}.
	\end{equation}
	We thus have 
	\begin{equation}\label{eq-xtilde}
		\frac{d\xtep_i}{dt}\geq 
		f_{iL}-C\ep-\mu+
		\frac{b_{i-1}}{2}\mu^{1+\xtep_i-\xtep_{i-1}}
		+\frac{b_{i+1}}{2}\mu^{1+\xtep_i-\xtep_{i+1}}
	\end{equation}
	and a similar upper bound,
	with initial condition (we reset the time of the swap to 0 from now on):
	\begin{equation}\label{3rd_phase_ci}
		\xtep(0)=\left(\frac{\log\ep}{\log(1/\mu)},-1,-2,\ldots,-L/2,-L/2+1,\ldots,-1,\frac{\log(\bar{x}_L-\ep)}{\log(1/\mu)}\right).
	\end{equation}
	{up to $o_\ep(1)$ terms due to the range of possible initial conditions coming from the previous phase.}
	{Here, if $L$ is odd, then the initial condition has to be replaced by
		\begin{equation}
			\xtep(0)=\left(\frac{\log\ep}{\log(1/\mu)},-1,-2,\ldots,-\lfloor L/2\rfloor,-\lfloor L/2\rfloor,\ldots,-1,\frac{\log(\bar{x}_L-\ep)}{\log(1/\mu)}\right),
		\end{equation}	
		but the proof idea stays the same.
	}
	Let $\delta>0$ and 
	$T_i^{in}(\delta,\mu):=\inf\{t>0 : \xtep_i>\xtep_{i-1}-(1-\delta) \text{ or } \xtep_i>\xtep_{i+1}-(1-\delta) \}$.  
	Then, for $t\in[0,T_i^{in}]$, that is, when $\xtep_i$ is above one of its neighbours minus $1-\delta$, then, for $\mu$ and $\delta$ small enough, the slope of $\xtep_i$ is prescribed by the fitness of trait $i$ with respect to trait $L$ (up to a multiple of  $\ep$). Indeed, by \eqref{eq-xtilde}:
	\begin{align}\label{xtepLbounds}
		d\xtep_i/dt&\geq f_{iL}-C\ep-\mu\geq f_{iL}-2C\ep , \nonumber \\
		d\xtep_i/dt&\leq f_{iL}+C\ep-\mu+
		\frac{b_{i-1}}{2}\mu^{\delta}
		+\frac{b_{i+1}}{2}\mu^{\delta}\leq f_{iL}+2C\ep.
	\end{align}
	Let 
	$T_i^{out}(\delta,\mu):=\inf\{t>\Tswap : \xtep_i<\xtep_{i-1}-(1+\delta) \text{ or } \xtep_i<\xtep_{i+1}-(1+
	\delta) \}$, we call it the exit time of the security region. 
	Let us show that, for $\mu$ small enough,  we have $T_i^{out}=\infty$, for all $i\in\{0,\ldots,L\}$. \\
	Assume by contradiction that $\inf\{T_i^{out},i\in\{0,\ldots,L\}\}<\infty$. 
	
	Among the indices $i$ that reach the infimum, consider the one such that $\xtep_i(T_i^{out})$ is maximal, that is $\xtep_i$ is the highest population among those which exit the security region first. By continuity of the solutions, at $t=T_i^{out}$ we have
	$\min\{1+\xtep_i-\xtep_{i-1},1+ \xtep_i-\xtep_{i+1}\}=-\delta $. 
	Suppose that $\xtep_i$ exits its security region by falling below its left neighbour minus one, i.e:
	\begin{equation}\label{exit}
		1+\xtep_i-\xtep_{i-1}=-\delta\quad \text{ and }\quad  1+ \xtep_i-\xtep_{i+1}>-\delta,
	\end{equation}
	the two other possibilities (right neighbour or both) are similar. By \eqref{eq-xtilde}, for $\mu$ small enough:
	\begin{align}
		\frac{d\xtep_i}{dt}(T_i^{out})
		&\geq 
		f_{iL}-C\ep-\mu+
		\min\left\{\frac{b_{i-1}}2,\frac{b_{i+1}}2\right\}
		\mu^{-\delta}
		=O(\mu^{-\delta}),
		\label{i}
		\\
		\frac{d\xtep_{i-1}}{dt}(T_i^{out})
		&=
		f_{i-1,L}-C\ep-\mu+
		\frac{b_{i-2}}{2}\mu^{1+\xtep_{i-1}-\xtep_{i-2}}
		+\frac{b_{i}}{2}\mu^{2+\delta}
		\leq 
		\frac{b_{i-2}}{2}o(\mu^{-\delta})
		+\frac{b_{i}}{2}\mu^{2+\delta}
		=o(\mu^{-\delta}).
		\label{i-1}
	\end{align}
	where the upper bounds in \eqref{i-1} come from the assumption that $\xtep_i$ is the highest population
	{among these}
	exiting their security region. 
	Indeed, if in \eqref{i-1} we had $1+\xtep_{i-1}-\xtep_{i-2}={-\delta}$ then, by definition ,$\xtep_{i-1}$ would
	exit its security region, thus we would have 
	$\xtep_{i-1}\leq\xtep_i$, which contradicts \eqref{exit}. Thus, $1+\xtep_{i-1}-\xtep_{i-2}>{-\delta}$.
	The equations \eqref{i} and \eqref{i-1} imply that the derivative $\frac d{dt}(\xtep_i-\xtep_{i-1})(T_i^{out})$ is as large as needed.
	Thus again by the continuity of the solutions, this implies the existence of some $t'<t$ such that 
	$\xtep_i(t')<\xtep_{i-1}(t')-(1+\delta)$. Hence $t'< T_i^{out}$, which is a contradiction. 
	This implies that $T_i^{out}=\infty$, for all $i\in\{0,\ldots,L\}$. 
	
	This allows us to describe the limit of $\xtep$ as $\mu\to0$. 
	A helpful example is given on Figure \ref{fig-pow-general}. First, as $f_{LL}=0$, equation \eqref{xtepLbounds} and the previous reasoning imply that until $\tau^+(\ep,\mu)$:
	\begin{equation}
		\frac{d\xtep_L}{dt}\geq 
		-C\ep \quad \text{ and } \quad 
		\frac{d\xtep_L}{dt}\leq 
		+C\ep,
	\end{equation}
	which implies that $\xtep_L\to0$ (take the limits $\ep\to0$ after $\mu\to0$). Now the initial condition \eqref{3rd_phase_ci} and Assumption \ref{ass.1} imply that $\xtep_i\to-(L-i)$, for $i=L/2,\ldots, L-1$. Indeed, $\xtep_L$ is close to 0, and $\xtep_i(0)=-(L-i)$ for those indices, so the only possibility to maintain a difference of less than one with their nearest neighbours and having a negative fitness $f_{i,L}$ is to stay constant.
	The shape of the first $L/2$  coordinates of the process is less trivial to formulate: each $\xtep_i$ behaves piece-wise linearly in the limit $\mu\to0$ and given the sequence $(f_{0L}, \ldots, f_{L-1,L})$ one can construct the successive slopes by following the rule ``$\xtep_i$ tries to decay with slope $f_{iL}$ while being at distance at most 1 of $\xtep_{i-1}$ and $\xtep_{i+1}$;  if it is not possible then it stays parallel to the largest of its neighbours, either $\xtep_{i-1}$ or $\xtep_{i+1}$".
	
	More precisely, consider the sequence $\{i_1,\ldots,i_r\}$ of "fitness records" defined recursively by $i_1=0$, 
	$i_k=\min\{i\in\llbracket 0,L-1 \rrbracket : f_{iL} <f_{i_{k-1}L} \}$. Then the previous reasoning implies that, for any $\ep>0$, as $\mu\to0$, 
	the process $(\xtep(t))_{t>0}$, starting with initial condition \eqref{3rd_phase_ci}, stays in an $\ep$-neighbourhood of the deterministic process $x(t)$ given by:
	\begin{align}
		x_i(t)=
		-(L-i)
		\vee
		\smash{\displaystyle\max_{k\in\llbracket0, i \rrbracket }}	\{-i-|f_{kL}|t\}		
		\vee
		\smash{\displaystyle\max_{k\in\llbracket 1, r \rrbracket }}	\{-i_k-|i-i_k|-|f_{i_kL}|t\}.			
	\end{align}
	Once again, Figure \ref{fig-pow-general} provides a helpful example to compute the formula.\qed

	\section{Proofs of Theorems \ref{pro_phase1_mugrand} and  \ref{pro_mupetit}}
	
	In this section we focus on mutation probabilities scaling as a negative power of $K$ {times a slowly varying function (recall \eqref{muenpuissancedeK}).
		
		\subsection{Poisson representation}\label{sectionpoisson}

		In the vein of Fournier and M\'el\'eard \cite{FM04}, we represent the population process in terms 
		of Poisson measures. Let $(Q_k^{(b)},Q_k^{(m)},Q_k^{(d)}, 0 \leq k \leq L)$ be independent Poisson random measures on $\R_+^2$ with intensity $ds d\theta$, 
		and {recall that} $(e_i, 0 \leq  i \leq L)$ is the canonical 
		basis of $\R^{L+1} $.
		We decompose on possible jumps that may occur: births without mutation, birth with mutation and deaths of individuals.
		For simplicity, we write 
		\begin{equation}\label{deathrate}
			d^{K}_{i}(x) = D^{K}_{i}(x)x_i = \left( d_{i} + \sum_{j=0}^L \frac{c_{ij}}{K}x_j \right) x_i,
		\end{equation}
		for  the total death rate of {the} sub-population {$i$.} 
		Recall that in this regime, we only consider the mutation kernel $m_{ij}^{(1)}= \mu \delta_{i+1,j}$.
		The process $X$ admits the following representation.
		For every real-valued function $h$ on $\R_+^{L+1}$ such that $h(X(t))$ is integrable,
		\begin{align}
			\label{defN}\nonumber
			h(X(t))=h(X(0))&+ \sum_{k=0}^{L} \int_0^t\int_{\R_+}  \Big(  
			h(X({s^-})+e_k)-h(X({s^-}))\Big)
			\mathbf{1}_{\theta \leq (1- \mu)b_{k}  X_k(s^-)} 
			Q_k^{(b)}(ds,d\theta)\\\nonumber
			&+\sum_{k=1}^{L} \int_0^t\int_{\R_+}  \Big(  
			h(X({s^-})+e_k)-h(X({s^-}))\Big) \mathbf{1}_{\theta \leq \mu b_{k-1}  X_{k-1}(s^-) }Q_{k}^{(m)}(ds,d\theta)
			\\
			&+\sum_{k=0}^{L} \int_0^t\int_{\R_+}  \Big(  
			h(X({s^-})-e_k)-h(X({s^-}))\Big)
			\mathbf{1}_{\theta  \leq D^K_k(X(s^-))X_k(s^-)}
			Q_k^{(d)}(ds,d\theta).
		\end{align}
		
		Let us now introduce a finite subset of $\N$ containing 
		the equilibrium size of the $0$-population,
		\begin{equation} \label{compact1}I_\eps^K:= \Big[K\Big(\bar{x}_0-2\eps \frac{\sup_{1 \leq i \leq L}c_{0i}}
			{c_{00}}\Big),
			K\Big(\bar{x}_0+2\eps \frac{\sup_{1 \leq i \leq L}c_{0i}}{c_{00}}\Big)\Big]\cap \N, \end{equation}
		and the stopping times $T^K_\eps$ and $S^K_\eps$, which denote respectively the hitting time of $\lfloor\eps K \rfloor$ by the total mutant 
		population ($X_1+...+X_L$) and the exit time of $I_\eps^K$ by the resident $0$-population,
		\begin{equation} \label{TKTKeps1} T^K_\eps := \inf \Big\{ t \geq 0, \sum_{1 \leq i \leq L}X_i(t)= \lfloor \eps K \rfloor \Big\},
			\quad S^K_\eps := \inf \Big\{ t \geq 0, X_0(t)\notin I_\eps^K \Big\}.  \end{equation}

		As shown in  \cite{champagnat2006microscopic}, we know that as long as the total mutant population 
		size is smaller than $\eps K$, the resident population 
		size stays close to its monomorphic equilibrium with a probability close to $1$ (see Lemma 
		\ref{Th3cChamp}).
		{This is a fundamental property of the population process, as it implies that the populations live in an almost constant environment and 
			are subject to an almost constant competitive pressure from other individuals, $c_{i0}\bar{x}_0$. This  allows us to couple $i$-population sizes 
			($1 \leq i \leq L-1$) with subcritical branching processes with migration $X_i^{(-)}$ and $X_i^{(+)}$ to control their dynamics.}
		{Moreover, after the first growing phase for the $L$-population,}
		if the sum of the $1$- to $(L-1)$-mutant population sizes stays smaller than $\eps K$, whereas the 
		$L$-mutant population size exceeds the size 
		$\eps K$, the $0$ and $L$ populations behave as if they were the only ones in competition. As a 
		consequence, the remaining time needed for the $L$-population
		to replace the $0$-population is close to $\log K/|f_{0L}|$ (see,  for instance, 
		\cite{champagnat2006microscopic} and later in this paper for a precise statement).
		Hence, the first step consists in
		estimating the time needed for the mutant population to reach the size $\lfloor \eps K \rfloor$.
		There are essentially two possibilities:
		\begin{itemize}
			\item Either {$K\mu^{L} \gg 1$;} in this case there is a (large) number of order 
			$K\mu^{L}$ of $L$-type individuals. Hence the outcome is similar to a large resident population producing recurrently favourable 
			mutants, studied in details 
			in \cite{Smadi2017}. The fixation time of the trait $L$ is of order $\log K$, 
			and we provide couplings with appropriate birth-death processes (without competition) with immigration to 
			control the subpopulation sizes.
			\item Or $K\mu^{L} \ll 1$; in this case some of the mutant-population size dynamics consist 
			in small excursions separated 
			with periods with no individual. Indeed, the $i$-population with $i\leq\alpha$ is again well 
			approximated by a 
			birth-death {process} (without competition) with immigration, which is close to the deterministic limit, 
			while, for the $i$-population with $i>\alpha$, 
			the immigration term is not large enough and the population is well described, at each arrival of a single mutant, 
			by a subcritical birth-death process. Each {excursion of} the sum of populations $i\in\llbracket\alpha,L-1\rrbracket$ has the same probability to produce a
			$L$-mutant which may generate a large population and invade. In this case, 
			the time to invasion is close to a geometric random variable, with a mean of order $1/(K\mu^L)$, much larger than $\log K$.
		\end{itemize}

		\subsection{Proof of Theorem \ref{pro_phase1_mugrand}} \label{alpha+1geqM}
		
		The time needed for the favourable mutation to invade the population  depends strongly on the mutation 
		probability per reproductive event, $\mu$.
		
		To study the {case when $K\mu^L\gg1$,} we  couple each population size $X_i$, $0 \leq i \leq L-1$ with two processes such that, 
		for every $0\leq i \leq L-1$ and $t \leq T_\eps^K \wedge S_\eps^K$,
		\begin{equation} \label{coupling1}  X^{(-)}_i(t) \leq X_i(t) \leq X^{(+)}_i(t), \quad a.s. \end{equation}
		
		By definition of the population process in \eqref{defN} and of the stopping times $T_\eps^K$ and $S_\eps^K$ in \eqref{TKTKeps1}, the 
		following processes satisfy \eqref{coupling1}:
		
		\begin{equation}\label{defno+-} X^{(\pm)}_0(t)= K\Big(\bar{x}_0 \pm 2\eps \frac{\sup_{1 \leq i \leq L}c_{0i}}
			{c_{00}}\Big)=:
			x_0^{(\pm)}K, \end{equation}
		and, for $1 \leq i \leq L-1$ {and $* \in \{-,+\}$,
			\begin{align}\nonumber
				\label{defNi1}
				X^{(*)}_i(t)= &\int_0^t\int_{\R_+}  \mathbf{1}_{\theta \leq (1- \mu)b_{i} X^{(*)}_i(s^-)} 
				Q_k^{(b)}(ds,d\theta)+ \int_0^t\int_{\R_+}  \mathbf{1}_{\theta \leq \mu b_{i-1}  X^{(*)}_{i-1}(s^-) }Q_{k}^{(m)}(ds,d\theta)
				\\ &- \int_0^t\int_{\R_+} \mathbf{1}_{\theta  \leq (d_i+ c_{i0}x_0^{(\bar{*})} 
					+\mathbf{1}_{\{*=-\}}\eps \sup_{1 \leq j \leq L}c_{ij}) X^{(*)}_i(s^-)}
				Q_k^{(d)}(ds,d\theta),
			\end{align}
			where $\bar{*}= \{-,+\} \setminus *$ and} we used the same Poisson measures as in \eqref{defN}.
		Note that from this representation, we get directly the classical semi-martingale decomposition for $X^{(-)}_i$ and $X^{(+)}_i$:
		for $* \in \{-,+\}$,
		\be
		X^{(*)}_i(t)= M^{(*)}_i(t)+A^{(*)}_i(t), 
		\ee
		where $M_i^{(*)}$ is a square integrable martingale and $A^{(*)}_i$ is a finite variation process, namely
		\begin{align}\nonumber
			M_i^{(-)}(t)=&\int_0^t\int_{\R_+}  \mathbf{1}_{\theta \leq (1- \mu)b_{i} X^{(-)}_i(s^-)} 
			(Q_k^{(b)}(ds,d\theta)-dsd\theta)+ \int_0^t\int_{\R_+}  \mathbf{1}_{\theta \leq \mu b_{i-1}  X^{(-)}_{i-1}(s^-) }(Q_{k}^{(m)}(ds,d\theta)-dsd\theta)
			\\ &- \int_0^t\int_{\R_+} \mathbf{1}_{\theta  \leq (d_i+ c_{i0}x_0^{(+)}
				+\eps \sup_{1 \leq j \leq L}c_{ij}) X^{(-)}_i(s^-)}
			(Q_k^{(d)}(ds,d\theta)-dsd\theta),
		\end{align}
		\begin{equation} \label{Ai}
			A_i^{(-)}(t)= \mu b_{i-1}\int_0^t  X^{(-)}_{i-1}(s)ds +\left((1-\mu)b_i-d_i-c_{i0}x_0^{(+)} 
			-\eps \sup_{1 \leq j \leq L}c_{ij}\right) \int_0^t X^{(-)}_i(s)ds,
		\end{equation}
		and the same expression for $M^{(+)}_i$ and $A^{(+)}_i$ by replacing the $(-)$'s by $(+)$'s and the terms
		\be
		d_i+c_{i0}x_0^{(+)} 
		+\eps \sup_{1 \leq j \leq L}c_{ij} 
		\ee
		by 
		\be
		d_i+c_{i0}x_0^{(-)}.
		\ee
		Finally,  we recall the expression of the quadratic variation of $M_i^{(-)}$,
		\begin{equation}
			\label{defqvNi1}
			\langle M^{(-)}_i\rangle_t=  \mu b_{i-1}  \int_0^t X^{(-)}_{i-1}(s)ds+
			\left((1- \mu)b_{i}+d_i+ c_{i0}x_0^{(+)} 
			+\eps \sup_{1 \leq j \leq L}c_{ij}\right) \int_0^t X^{(-)}_i(s)ds,
		\end{equation}
		and
		the one of $M_i^{(+)}$ is obtained by similar modifications {as} before.\\

		{Let us now introduce, for $1 \leq k \leq L-1$, the following notations:}
		\begin{equation}\label{defspm}
			-s_{k0}^{(+)}:=(1-\mu)b_k-d_k- c_{k0}x_0^{(-)}, \quad \text{and} \quad 
			-s_{k0}^{(-)}:=(1-\mu)b_k-d_k- c_{k0}x_0^{(+)}-\eps \sup_{1 \leq j \leq L}c_{kj},
		\end{equation}
		as well as, for $* \in \{-,+\}$:
		{\begin{equation}\label{deftepsi}
				x_k^{(*)}:=(1*\eps)^{k}\frac{b_0... b_{k-1} x_0^{(*)}\mu^k}{{s_{10}^{(*)}...s_{k0}^{(*)}}} \quad \text{and}
				\quad t^{(k)}_\eps := \frac{|\ln \eps|}{s_{k0}^{(-)}} .
			\end{equation}
			Notice that  $s_{k0}^{(+)}\leq s_{k0}^{(-)}$ and that $s_{k0}^{(+)}$ and $s_{k0}^{(-)}$ are positive,
			for $\eps$ small enough, 
			by Assumption (A2).
			
			\begin{lemma} \label{lemme_bounds_ni} 
				For every $0 \leq i \leq L-1$,
				$$
				x_i^{(-)}K\leq \E\left[X_i^{(-)}(s)\right], \quad  s\geq t_\eps^{(1)}+...+t_\eps^{(i)} , 
				$$
				and
				$$ \E \left[X_i^{(+)}(s) \right]\leq x_i^{(+)}K, \quad  s\geq 0. 
				$$
			\end{lemma}

			\begin{proof} 
				We prove this Lemma by induction.
				The property is true for $i=0$. Recall \eqref{defNi1}, \eqref{defspm} and \eqref{deftepsi}.
				Then we get for $1 \leq i \leq L-1$
				$$
				\frac{d}{dt}\E[X_i^{(*)}(t)]= b_1 \mu\E[X_{i-1}^{(*)}(t)]  -s_{i0}^{(*)} \E[X_i^{(*)}(t)], \quad \E[X_i^{(*)}(0)]=0.
				$$
				By the induction hypothesis, this yields for every $t \geq 0$,
				$$ \frac{d}{dt}\E[X_i^{(+)}(t)]\leq \mu b_{i-1}  x_{i-1}^{(+)}K -s_{i0}^{(+)} \E[X_i^{(+)}(t)], \quad \E[X_i^{(*)}(0)]=0 $$
				and, for every $t \leq t^{(1)}_\eps$,
				$$ \frac{d}{dt}\E[X_i^{(-)}(t)]\geq \mu b_{i-1}  x_{i-1}^{(-)}K   -s_{i0}^{(-)} \E[X_i^{(-)}(t)], \quad \E[X_i^{(*)}(0)]=0, $$
				which ends the proof.
			\end{proof}
			
			\begin{lemma} \label{diff_couting_processes}
				Let $0 \leq i < L$ such that $\lim_{K \to \infty} K \mu^i = \infty$, and introduce the two counting processes:
				\begin{equation} \label{defRi}
					R^{(\pm)}_i(t):=\int_0^t \int_{\R^+} \mathbf{1}_{\theta \leq \mu b_{i}X^{(\pm)}_{i}(s^-)}
					Q_{i+1}^{(m)}(ds,d\theta),
				\end{equation}
				and
				\begin{equation}\label{defbarRi}
					\bar{R}^{(\pm)}_i(t):=\int_0^t \int_{\R^+} \mathbf{1}_{\theta \leq \mu b_{i}\E[X^{(\pm)}_{i}(s^-)]}
					Q_{i+1}^{(m)}(ds,d\theta),
				\end{equation}
				where we use the same Poisson point measure as in \eqref{defN}. Then 
				$ M^{(\pm)}_i:=R^{(\pm)}_i-\bar{R}^{(\pm)}_i $
				is a martingale and
				$$ \E\left[ (M^{(\pm)}_i(t))^2 \right] \leq 2 \mu b_i x_i^{(\pm)}Kt. $$
			\end{lemma}
			
			\begin{proof} 
				We have
				\begin{align*}
					M_i^{(\pm)}(t) &= \int_0^t \int_{\R^+} \left(\mathbf{1}_{\theta \leq \mu b_{i}X_{i}^{(\pm)}(s^-)}-
					\mathbf{1}_{\theta \leq \mu b_{i}\E[X^{(\pm)}_{i}(s^-)]}\right)
					Q_{i+1}^{(m)}(ds,d\theta)\\
					&= \int_0^t \int_{\R^+} \left(\mathbf{1}_{\theta \leq \mu b_{i}X^{(\pm)}_{i}(s^-)}-
					\mathbf{1}_{\theta \leq \mu b_{i}\E[X^{(\pm)}_{i}(s^-)]}\right)
					\tilde{Q}_{i+1}^{(m)}(ds,d\theta) + \mu b_{i} \int_0^t \left( X^{(\pm)}_i(s)- \E[X^{(\pm)}_i(s)] \right)ds.
				\end{align*}
				Hence, $M^{(\pm)}_i$ is a martingale. We can compute its quadratic variation via
				\begin{align*}
					\langle M^{(\pm)}_i\rangle_t &= \int_0^t \int_{\R^+} \left(\mathbf{1}_{\theta \leq \mu b_{i}X^{(\pm)}_{i}(s)}-
					\mathbf{1}_{\theta \leq \mu b_{i}\E[X^{(\pm)}_{i}(s)]}\right)^2ds d\theta\\
					&= \mu b_{i} \int_0^t  \left(X^{(\pm)}_{i}(s)+\E[X^{(\pm)}_{i}(s)]- 2 \left( X^{(\pm)}_{i}(s)\wedge\E[X^{(\pm)}_{i}(s)] \right) \right)ds 
					= \mu b_{i} \int_0^t  \left|X^{(\pm)}_{i}(s)-\E[X^{(\pm)}_{i}(s)]\right|ds 
				\end{align*}
				As a consequence
				$$ \E\left[ (M^{(\pm)}_i(t))^2 \right] = \E\left[ \langle M^{(\pm)}_i\rangle_t \right]  
				\leq  \mu b_{i} \int_0^t  \E\left[X^{(\pm)}_{i}(s)+\E[X^{(\pm)}_{i}(s)]\right]ds
				= 2 \mu b_{i} \int_0^t  \E\left[X^{(\pm)}_{i}(s)\right]ds,$$
				and we end the proof applying Lemma \ref{lemme_bounds_ni}.
		\end{proof}}
		
		We have now the tools needed to prove Theorem \ref{pro_phase1_mugrand}.
		
		\begin{proof}[Proof of Theorem \ref{pro_phase1_mugrand}]
			From \eqref{coupling1} and Lemma \ref{lemme_bounds_ni} we know that the $L$-population has a size of order {$K\mu^L$} after a time of order
			$\ln(1/\eps)$, for $\eps$ small enough (not scaling with $K$). The proof of the asymptotics
			$$ \liminf_{K \to \infty} \P \left( (1-c\eps)\frac{1}{\alpha} \frac{L}{f_{L0}}< 
			\frac{T^{(K,L)}_\eps }{\log K} < (1+c\eps)\frac{1}{\alpha} \frac{L}{f_{L0}} \right) \geq 1-c\eps
			$$
			follows this of Lemma 6.1 in \cite{Smadi2017}.
			{To end the proof of Theorem \ref{pro_phase1_mugrand}}, two more steps are needed. 
			{The first one is the study of the swap between 0 and $L$-populations, which 
				leads to the first statement \eqref{eq1th} of Theorem \ref{pro_phase1_mugrand}, 
				and the second one is the study of the extinction phase of the unfit mutants, which  
				leads to the second and third statements \eqref{eq2th} and \eqref{eq3th} of Theorem 
				\ref{pro_phase1_mugrand}.}

			First we need to show that once the $L$-population size 
			has reached the value $\eps K$, the rescaled populations $X_0^K$ and $X_L^K$ behave as if they were the only ones in competition and follow 
			a dynamics close to the solutions to \eqref{cde-gen}
			with $L=1$, $\mu=0$ and initial conditions satisfying
			\be
			x_L(0)=\eps \quad \text{and} \quad |x_0(0)-\bar{x}_0|\leq 2\eps \frac{\sup_{1 \leq i \leq L}c_{0i}}{c_{00}}. 
			\ee
			This stays true until a time when $X_L^K$ is close to its monomorphic equilibrium size $\bar{x}_L$ and $X_0^K$ is smaller than $\eps^2$. 
			During this time interval, the $i$-population sizes, for $1\leq i \leq L-1$,
			do not evolve a lot.
			More precisely, there exists a constant $\eps_0$ such that, for $\eps \leq \eps_0$ and 
			$1 \leq i \leq L-1$, with a probability close to one,
			$ {\mu^{i+\eps} \leq X_i^K(t) \leq \mu^{i-\eps}}$,
			where $t$ describes an interval with a duration of order $1$, which is the time needed for the rescaled 
			population sizes 
			$(X_0^K,X_L^K)$ to hit 
			the set $((0,\eps^2],[\bar{x}_L-\eps,\bar{x}_L+\eps])$ from an initial state close to $(\bar{x}_0,\eps)$.
			To prove that, the idea is to show that the total population size stays of order $K$, and as a consequence with a probability 
			close to one, 
			we can find a positive $A$ such that $-A \E[X_i^K(t)] \leq d \E[X_i^K(t)]/dt \leq A \E[X_i^K(t)]$
			(for rigorous arguments, see the proof of Lemma 10 in \cite{billiard2017interplay}). This leads to the 
			following rigorous statement: there exist a 
			positive $\eps_0$ and a function $f: x \mapsto f(x) \in (0,x^2)$ such that, for $\eps \leq \eps_0$, 
			there exist a stopping time $U_\eps^K$ and an event $\mathcal{E}$ such that
			\begin{equation} \label{firstpropE} \frac{U_\eps^K f_{L0}}{\log K}\underset{K \to \infty}{\to} 1, \quad \text{in probability}, \qquad
				\P(\mathcal{E})\geq 1-\eps, \end{equation}
			and almost surely on $\mathcal{E}$,
			\begin{equation} \label{onmathcalE} f(\eps)< {X^K_0(U_\eps^K)}<\eps^2, \ 
				\left|{X^K_L(U_\eps^K)}-\bar{x}_L\right|\leq \eps, \ {K\mu^{\eps}} < {X^K_i(U_\eps^K)/\mu^i}<K{\mu^{-\eps}}, 
				\ 1\leq i \leq L-1. \end{equation}
			{This proves part \eqref{eq1th} of Theorem \ref{pro_phase1_mugrand}.}
			
			Second, we need to approximate the time for the  $i$-populations ($0 \leq i \leq L-1$) to get extinct after the time $U_\eps^K$.
			Let us define two stopping times
			$$ V_\eps^K:= \inf \left\{ t \geq U_\eps^K, |{X^K_L(t)}-\bar{x}_L|>2\eps \right\}, 
			$$
			and
			$$
			W_\eps^K:= \inf \Big\{ t \geq U_\eps^K, \sum_{0\leq i \leq L-1} {X^K_i(t)}>\eps \Big\}. 
			$$
			We will prove the following property: there exist $\eps_0,C,V>0$ such that, for $\eps \leq \eps_0$,
			{\begin{equation}\label{eqpopM}
					\liminf_{K \to \infty} \P\left( \eee^{KV} < \left( W_\eps^K \wedge  V_\eps^K \right)\right)\geq 1-o_\eps(1),
				\end{equation}
				where $o_\eps(1)$ is a function of $\eps$ which goes to $0$ as $\eps$ goes to $0$.}
			This allows us to couple the $i$-population sizes ($0 \leq i \leq L-1$) with sub-critical birth and death processes with inhomogeneous 
			immigration in order to approximate their time to extinction.
			
			To prove \eqref{eqpopM}, we need to control the dynamics of two types of populations: first the $i$-populations sizes, with 
			$0 \leq i \leq L-1$, which are counter-selected, and whose initial size 
			is smaller than $O(\eps^2 K)$; second the $L$-population size. 
			Let us show that with a probability converging 
			to 1 as $K\to\infty$ , $W_\eps^K < V_\eps^K$. To this aim, notice that on the time interval $[0,V_\eps^K]$, 
			the death rate of the 
			$i$-population ($0\leq i \leq L-1$) satisfies:
			\be
			d_i + \sum_{j=0}^L \frac{c_{ij}}{K}X_j \geq d_i + c_{iL}(\bar{x}_L-2\eps).
			\ee
			Moreover we know that almost surely on the event $\mathcal{E}$, we have $X_0(U_\eps^K) \leq \eps^2 K$. Hence if we
			introduce, for $0 \leq i \leq L-1$ and $k \in \N$, the notation
			\be
			T_k^{(X_i)}:= \inf \{ t \geq U_\eps^K, X_i(t) = k \}, 
			\ee
			and  apply \eqref{atteindre}, we can compare the $i$-population process to a subcritical birth-death process with the effective 
			death rate given above and obtain
			\begin{align} \label{TepskVepsK} 
				\P\left(T_{\eps K}^{(X_0)}< V_\eps^K\Big| \mathcal{E}\right) \leq &   
				\frac{((d_0 +c_{0L}(\bar{x}_L-2\eps))/b_0)^{\eps^2 K}-1}{((d_0 +c_{0L}(\bar{x}_L-2\eps))/b_0)^{\eps K}-1} 
				\leq   \left( \frac{b_0}{d_0 +c_{0L}(\bar{x}_L-2\eps)} \right)^{\eps K (1-\eps)} {\leq  C \eps},
			\end{align}
			for any constant $C$, $K$ large enough and $\eps$ small enough.

			Let us denote by $\mathcal{M}_{01}$ the number of type $1$ mutants produced by type $0$-individuals during the 
			time interval $[U_\eps^K,V_\eps^K]$. From \eqref{defN}, we have: 
			\be
			\mathcal{M}_{01}=\int_{U_\eps^K}^{V_\eps^K \wedge T_0^{(X_0)}} \mathbf{1}_{\theta\leq \mu b_0X_0(s^-)}Q_0^{(m)}(ds,d\theta). 
			\ee
			Moreover, {considering} all the possible {orderings} of $T_{\eps K}^{(X_0)}$, $V_\eps^K$, $ T_{0}^{(X_0)}$ and 
			$\ln K/\sqrt{\eps}$,  we get
			$$
			\P \left( \left\{ T_{\eps K}^{(X_0)}< V_\eps^K \wedge  T_{0}^{(X_0)} \right\} \cup 
			\left\{ V_\eps^K \wedge  T_{0}^{(X_0)} <  T_{\eps K}^{(X_0)} \wedge \frac{\ln K}{\sqrt{\eps}} \right\}\cup 
			\left\{ \frac{\ln K}{\sqrt{\eps}}< V_\eps^K \wedge  T_{0}^{(X_0)} <  T_{\eps K}^{(X_0)}  \right\}\right)=1.
			$$
			Hence, using the Markov inequality, \eqref{TepskVepsK}, as well as the fact that a subcritical branching process takes a time of 
			order $\ln K$ to get extinct (see \eqref{equi_hitting}), 
			we get that
			\begin{align}\nonumber \P \left( \mathcal{M}_{01}>K \mu \ln K | \mathcal{E} \right) 
				& \leq \frac{(\eps K) (b_0 \mu) (\ln K/\sqrt{\eps}) }{K \mu \ln K }+
				\P\left(T_{\eps K}^{(X_0)}< V_\eps^K\Big| \mathcal{E}\right)
				+\P\left(\frac{\ln K}{\sqrt{\eps}}< V_\eps^K \wedge  T_{0}^{(X_0)} <  T_{\eps K}^{(X_0)} \Big| \mathcal{E}\right)\\
				& = o_\eps(1).
			\end{align}
			
			Applying again \eqref{atteindre}, we find  that each mutant of type $1$ that is 
			produced by a type $0$ individual generates a type $1$ 
			population whose size has a probability to reach $\eps /\mu \ln K $ that is bounded by
			$$ \left( \frac{b_1}{d_1 +c_{1L}(\bar{x}_L-2\eps)} \right)^{\eps/\mu \ln K -1}. 
			$$
			We deduce that
			$$
			\P(T_{\eps K}^{(X_1)}<V_\eps^K| \mathcal{E} ) \leq \P \left( \mathcal{M}_{01}>K \mu \ln K | \mathcal{E} \right) 
			+ K\mu \ln K \left( \frac{b_1}{d_1 +c_{1L}(\bar{x}_L-2\eps)} \right)^{\eps/\mu \ln K -1}= o_\eps(1). 
			$$
			We reiterate the reasoning for the other counter-selected mutant populations ($i$-populations with $2 \leq i \leq L-1$) 
			to conclude
			\begin{equation} \label{WpluspetitqueV}
				\P\left(W_\eps^K<V_\eps^K\Big| \mathcal{E} \right)= o_\eps(1).
			\end{equation}
			By a direct application of Lemma \ref{Th3cChamp}, we get the existence of a positive constant $V$ such that:
			\begin{equation} \label{champa_grandes_dev} \liminf_{K \to \infty} \P\left( \eee^{KV} <  V_\eps^K \Big|  V_\eps^K \leq W_\eps^K , \mathcal{E}\right)=1. \end{equation}
			Using \eqref{WpluspetitqueV} and \eqref{champa_grandes_dev}, we get:
			\begin{align}\nonumber
				\liminf_{K \to \infty} \P\left( \eee^{KV} < \left( W_\eps^K \wedge  V_\eps^K \right) \Big| \mathcal{E}\right)
				& \geq \liminf_{K \to \infty} \P\left( \eee^{KV} <  V_\eps^K \leq  W_\eps^K  \Big| \mathcal{E}\right)\\ 
				& = \liminf_{K\to \infty} \P\left( \eee^{KV} <  V_\eps^K \Big| V_\eps^K \leq  W_\eps^K , \mathcal{E}\right)\P \left(V_\eps^K \leq  W_\eps^K  \Big| \mathcal{E}\right) = 1-o_\eps(1).\end{align}
			This proves \eqref{eqpopM}, {and thus statement \eqref{eq3th} of Theorem \ref{pro_phase1_mugrand}}, as we recall that $\P(\mathcal{E})\geq 1-\eps$.\\

			We may now approximate the growth rates of the $i$-population sizes 
			($0\leq i \leq L-1$) during the time interval $[U_\eps^K,V_\eps^K \wedge W_\eps^K]$. For $0 \leq i \leq L-1$, let us introduce, for $* \in \{-,+\}$
			\begin{equation} \label{defsigma-} -\sigma_i^{(*)}:=b_i(1-\mu)-d_i -c_{iL}(\bar{x}_L\bar{*}2\eps)-\mathbf{1}_{\{*=-\}}\sup_{0 \leq k \leq L-1}c_{ik}\eps, \end{equation}
			where $\bar{*}= \{-,+\} \smallsetminus * $.
			Notice that,
			for $\eps$ small enough the $(\sigma_i^{(*)})_{0 \leq i \leq L-1}$ are
			pairwise distinct by the {fourth} point of Assumption \ref{ass.1}. 
			We  consider such {an} $\eps$ throughout the remainder of the proof 
			to make sure that we do not  divide by $0$.
			Notice also that Equation \eqref{defsigma-} ensures that
			there exists $C>0$ such that, for $\eps$ small enough,
			$$
			0<|f_{iL}|-C\eps<\sigma_i^{(+)}<|f_{iL}|<\sigma_i^{(-)}<|f_{iL}|+C\eps.
			$$
			From the definition of the process $X$ in \eqref{defN} {and from \eqref{onmathcalE}}, we get that almost surely on the event $\mathcal{E}$ and for $0\leq i \leq L-1$,
			\begin{equation}\label{compaPi+-} P_i^{(-)}(t)\leq X_i(U_\eps^K+t)\leq P_i^{(+)}(t), 
				\quad \forall \ U_\eps^K\leq {U_\eps^K+} t \leq V_\eps^K \wedge W_\eps^K \end{equation}
			where, for $t\geq 0$ and $* \in \{-,+\}$,
			\begin{align}
				\label{defPi}\nonumber
				P_i^{(*)}(t)={X_i(U_\eps^K)}&+  \int_{U_\eps^K}^{U_\eps^K+t}\int_{\R_+} \mathbf{1}_{\theta \leq (1- \mu)b_{i}  P_i^{(*)}(s^-)} 
				Q_i^{(b)}(ds,d\theta)
				+ \int_{U_\eps^K}^{U_\eps^K+t}\int_{\R_+} \mathbf{1}_{\theta \leq \mu b_{i-1}  P_{i-1}^{(*)}(s^-) }Q_{i}^{(m)}(ds,d\theta)
				\\
				&- \int_{U_\eps^K}^{U_\eps^K+t}\int_{\R_+} \mathbf{1}_{\theta  \leq ((1-\mu)b_i+\sigma_i^{(*)})P_i^{(*)}(s^-)}
				Q_i^{(d)}(ds,d\theta),
			\end{align}
			where we recall that by convention $b_{-1}=0$.
			
			To find a lower bound of the extinction time of the unfit mutant population size, let us introduce
			\begin{equation} \label{defbetaM} \beta_L:= \left\{k\in\Ninterval{0}{L-1} {\text{ such that }} \frac{|f_{kL}|}{1-k/\alpha}= 
				\inf_{0\leq j \leq L-1}
				\left\{ \frac{|f_{jL}|}{1-j/\alpha} \right\} \right\} .
			\end{equation}
			We will see that the $\beta_L$-population is the one which takes the longest time to get extinct, and drives the time to 
			extinction of the whole 
			mutant-population.
			Recalling \eqref{onmathcalE}, we know that on the event $\mathcal{E}$ the  {size at time $U_\eps^K$} of the 
			$\beta_L$-population is
			$$
			C(\eps,K)K{\mu^{\beta_L}}, \quad \text{with} \quad {\mu^{\eps}} \leq C(\eps,K) \leq {\mu^{-\eps}.} 
			$$
			From \eqref{compaPi+-} and \eqref{defPi} we see that almost surely on $\mathcal{E}$ and on the time interval 
			$[U_\eps^K , V_\eps^K \wedge W_\eps^K]$,
			the $\beta_L$-population size is larger than a sub-critical birth and death process with initial state 
			$C(\eps,K)K{\mu^{\beta_L}}$, 
			individual birth rate $b_{\beta_L}(1-\mu)$, and individual death rate $b_{\beta_L}(1-\mu)+ \sigma_{\beta_L}^{(-)}$. 
			Applying {Equation \ref{equi_hitting0}}, we deduce that
			\begin{equation}\label{mintext}
				\liminf_{K\to \infty}\P \left( \inf \left\{ t \geq {0}, X_{\beta_L}({U_\eps^K+}t)=0 \right\} \geq 
				\left(1 -\frac{\beta_L}{\alpha}\right) 
				\frac{(1-\eps)}{\sigma_{\beta_L}^{(-)}}\ln K 
				\Big|\mathcal{E}\right) \geq 1-\eps.
			\end{equation}
			
			The last step of the proof consists in finding a bound {for} $\E[P_i^{(+)}(t)]$ for large $t$, to show that the total unfit mutant population 
			size takes a time of order at {most} $(1+l\eps)(1-\beta_L/\alpha)\ln K / \sigma^{+}_{\beta_L}$, for 
			some positive $l$ 
			{(to be made precised later, see \eqref{defl})}
			to get extinct. To simplify notations, 
			let us introduce, for $0 \leq i \leq L-1$ 
			and $* \in \{-,+\}$,
			\begin{equation}\label{defmathcalSi*} \mathfrak{f}_i^{(*)}:= \inf \{ \sigma_{j}^{(*)},
				0\leq j \leq i\}. \end{equation}
			We will see that the mutant population whose size decreases the slowest provides the leading term and 
			scale the time needed for all but the $L$ populations to get extinct.
			To prove that, we  now show by induction that there exists $\eps_0>0$ and a sequence of positive functions, 
			$(g_0:x\mapsto x^2,g_1,...,g_{L-1})$,
			such that, for every  $0\leq i \leq L-1$, $\eps \leq \eps_0$ and 
			$t \geq 0$, 
			\begin{equation} \label{majEPi+} \E[P_i^{(+)}(t)]\leq g_i(\eps) K {\mu^{i-\eps}}\eee^{-\mathfrak{f}_i^{(+)} t.} \end{equation}
			For $i=0$, from definitions \eqref{defsigma-}, \eqref{defPi} and property \eqref{onmathcalE}, we get
			$$
			\E[P_0^{(+)}(t)]\leq \eps^2 {K} \eee^{-\sigma_0^{(+)}t}=\eps^2 {K} \eee^{-\mathfrak{f}_0^{(+)}t} .
			$$
			Let us assume that \eqref{majEPi+} holds for every $i$ such that $0 \leq i \leq i_0 <L-1$. Then from \eqref{defsigma-}, \eqref{defPi} and the 
			induction hypothesis, for $t \geq 0$,
			\begin{align}\nonumber
				\frac{d}{dt}\E[P_{i_0+1}^{(+)}(t)]&\leq - \sigma_{i_0+1}^{(+)}\E[P_{i_0+1}^{(+)}(t)]+ \mu b_{i_0} \E[P_{i_0}^{(+)}(t)]\\\nonumber
				&\leq - \sigma_{i_0+1}^{(+)}\E[P_{i_0+1}^{(+)}(t)]+ \mu b_{i_0} g_{i_0}(\eps) K {\mu^{i_0-\eps}}\eee^{-\mathfrak{f}_{i_0}^{(+)} t}\\
				&= - \sigma_{i_0+1}^{(+)}\E[P_{i_0+1}^{(+)}(t)]+ b_{i_0} g_{i_0}(\eps) K{\mu^{i_0+1-\eps}}\eee^{-\mathfrak{f}_{i_0}^{(+)} t}.
			\end{align}
			Applying the method of variation of parameters, we get, for every $t\geq 0$,
			\begin{align}\nonumber
				\E[P_{i_0+1}^{(+)}(t)]&\leq \E[P_{i_0+1}^{(+)}(0)]\eee^{-\sigma_{i_0+1}^{(+)}t} + 
				\frac{b_{i_0} g_{i_0}(\eps) K{\mu^{i_0+1-\eps}}}{\sigma_{i_0+1}^{(+)}-\mathfrak{f}_{i_0}^{(+)}}
				\left(\eee^{-\mathcal{S}_{i_0}^{(+)} t}-\eee^{-\sigma_{i_0+1}^{(+)}t} \right)\\
				&\leq K{\mu^{i_0+1-\eps}} \left( \eee^{-\sigma_{i_0+1}^{(+)}t} + 
				\frac{b_{i_0} g_{i_0}(\eps)}{\sigma_{i_0+1}^{(+)}-\mathfrak{f}_{i_0}^{(+)}}
				\left(\eee^{-{\mathfrak{f}}_{i_0}^{(+)} t}-\eee^{-\sigma_{i_0+1}^{(+)}t} \right) \right),
			\end{align}
			where the last inequality is a consequence of \eqref{onmathcalE}. Hence, the $i_0+1$-population satisfies \eqref{majEPi+}, with 
			${\mathfrak{f}}_{i_0+1}^{(+)} = {\mathfrak{f}}_{i_0}^{(+)} \wedge \sigma_{i_0+1}^{(+)}$,
			according to the definition \eqref{defmathcalSi*}, and 
			$$
			g_{i_0+1}(\eps)=  1 + \frac{2b_{i_0}g_{i_0}(\eps)}{|\sigma_{i_0+1}^{(+)}-\mathfrak{f}_{i_0}^{(+)}|}. 
			$$

			Moreover, let us introduce $l>0$ such that, for $\eps$ small enough and for $0\leq i \leq L-1$, 
			{\begin{equation} \label{defl}
					\frac{1-i/\alpha+\eps/\alpha}{\mathfrak{f}_i^{(+)}}<(1+l\eps)\frac{1-\beta_L/\alpha}{\sigma_{\beta_L}^{(+)}} ,
			\end{equation}}
			(which is possible according to the definitions \eqref{defsigma-} and \eqref{defbetaM}) and define
			$$
			s_K:=  \frac{(1+l\eps)}{\sigma_{\beta_L}^{(+)}} {\ln \left( K \mu^{\beta_L} \right)} . 
			$$
			Then, applying \eqref{compaPi+-}, \eqref{majEPi+} and the Markov inequality, we get:
			\begin{align}\nonumber
				& \P\left( \exists i\in\Ninterval{0}{L-1}, X_i(s_K)\geq 1 \Big| \mathcal{E}\right)  \leq 
				\sum_{0\leq i \leq L-1} \P\left( X_i(s_K)\geq 1 \Big| \mathcal{E}\right) \\ \nonumber
				& \qquad     \leq  \sum_{0\leq i \leq L-1} \P\left( P_i^{(+)}(s_K)\geq 1 \Big| \mathcal{E}\right)
				\leq  \sum_{0\leq i \leq L-1} \E\left( P_i^{(+)}(s_K) \Big| \mathcal{E}\right)
				\leq  \sum_{0\leq i \leq L-1} g_i(\eps) K{\mu^{i-\eps}}\eee^{-{\mathfrak{f}}_i^{(+)} s_K}\\\nonumber
				&\qquad = \sum_{0\leq i \leq L-1} g_i(\eps)\ {\exp \left[ \mathfrak{f}_i^{(+)} \left( 
					\frac{ 1}{\mathfrak{f}_i^{(+)}}\ln \left(K\mu^{i-\eps}\right)
					-\frac{(1+l\eps)}{\sigma_{\beta_L}^{(+)}} \ln \left( K \mu^{\beta_L} \right)\right) \right]} \\
				& \qquad={ \sum_{0\leq i \leq L-1} g_i(\eps) \ \exp \left[ \mathfrak{f}_i^{(+)} \left( \frac{1-i/\alpha+\eps/\alpha}{\mathfrak{f}_i^{(+)}}
					-(1+l\eps)\frac{1-\beta_L/\alpha}{\sigma_{\beta_L}^{(+)}}\right) \ln K \left( 1 + o (1) \right) \right]},
			\end{align}
			{where we used \eqref{muenpuissancedeK} in the last line.}
			According to the definition of $l$, the last term goes to $0$ as $K$ goes to infinity.

			Combining \eqref{firstpropE}, \eqref{eqpopM}, \eqref{mintext}, and \eqref{temps_expo} {proves statement \eqref{eq2th} of Theorem \ref{pro_phase1_mugrand} and thus ends the proof of this theorem.}
		\end{proof}
		
		\subsection{Proof of Theorem \ref{pro_mupetit}}
		
		Assume first that \eqref{muenpuissancedeK} holds and that $\alpha \notin \N$. Theorem \ref{pro_mupetit} addresses the case where
		$K\mu^{L}$ is small. Only the $\lfloor\alpha \rfloor$ first mutant populations  has a large size, as
		$$
		K \mu^{\lfloor \alpha \rfloor}= {f^{\lfloor\alpha \rfloor}(K)} K^{1-{\lfloor \alpha \rfloor}/\alpha} \to \infty, \quad K \to \infty, 
		$$
		$$
		K \mu^{\lfloor \alpha \rfloor+1}= {f^{\lfloor \alpha \rfloor+1}(K)} K^{1-{(\lfloor \alpha \rfloor+1)}/\alpha}\to 0, \quad K \to \infty. 
		$$
		For $\lfloor \alpha \rfloor +1 \leq i \leq L-1$, the $i$-mutant population sizes  perform excursions
		until a successful $L$-individual is created. By successful $L$-individual we mean a 
		mutant $L$ which generates  
		a population out-competing the other populations.
		Here again the key tools are couplings with birth and death processes without 
		competition.
		
		Let us denote by 
		$T^{(i)}$ (see definition in \eqref{defTi}) the birth time of the $i$-th mutant of type ($\lfloor \alpha \rfloor +1$) 
		descended from an individual of type $\lfloor \alpha \rfloor$ and by
		$X_0^{(i)}$ the type ($\lfloor \alpha \rfloor +1$)-population generated by this individual.
		Then, we use the lexicographic order to number the $k$-mutant populations, with $\lfloor \alpha \rfloor +2 \leq k \leq L$ (see Figure \ref{fig_arbres} for 
		an illustration). More precisely, 
		\begin{itemize}
			\item For $j \geq 1$, $X^{(i)}_j$ is the ($\lfloor \alpha \rfloor +2$)-population generated by the $j$th 
			($\lfloor \alpha \rfloor +2$)-mutant produced by an
			individual of type ($\lfloor \alpha \rfloor +1$) belonging to the population $X_0^{(i)}$ 
			\item For $j,k \geq 1$, $X^{(i)}_{jk}$ is the ($\lfloor \alpha \rfloor +3$)-population generated by the $k$th 
			($\lfloor \alpha \rfloor +3$)-mutant produced by an
			individual of type ($\lfloor \alpha \rfloor +2$) belonging to the population $X^{(i)}_j$...
		\end{itemize}
		As we will see along the proof, a mutant population of type $i$ produces typically no 
		$(i+1)$-mutant, 
		one $(i+1)$-mutant with a probability of order 
		$\mu$, and more than {one $(i+1)$-mutant} with a probability of order $\mu^2$.
		The law of all trees can be approximated by the law of a sub-critical Galton-Watson process, and trees are 
		approximately independent.
		Hence we will be able to approximate the probability for the $X^{(i)}_0$ populations ($i\geq 1$) to generate a successful mutant 
		$L$ by a common probability, 
		and the time needed for a successful $L$-mutant to appear is close to an exponential random variable with mean  
		one divided by this probability.
		
		\begin{figure}[h!]
			\centering
			\includegraphics[width=.95\textwidth]{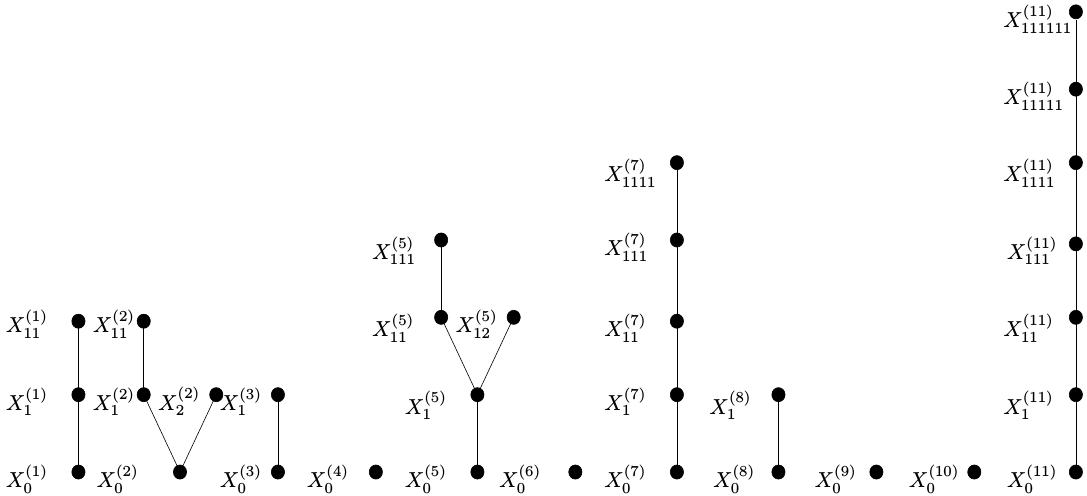}
			\caption{Numbering of the successive sub-populations: {in this example, the first ($\lfloor \alpha \rfloor +1$)-mutant 
					generates the ($\lfloor \alpha \rfloor +1$)-population $X_0^{(1)}$. One individual of the population $X_0^{(1)}$ 
					gives birth to an ($\lfloor \alpha \rfloor +2$)-mutant, which generates the 
					($\lfloor \alpha \rfloor +2$)-population $X_1^{(1)}$.
					One individual of the population $X_1^{(1)}$ 
					gives birth to an ($\lfloor \alpha \rfloor +3$)-mutant, which generates the 
					($\lfloor \alpha \rfloor +3$)-population $X_{11}^{(1)}$. The population $X_{11}^{(1)}$ gets extinct without giving birth 
					to any ($\lfloor \alpha \rfloor +4$)-individual.
					The second ($\lfloor \alpha \rfloor +1$)-mutant produced by an $\lfloor \alpha \rfloor$-individual
					generates the ($\lfloor \alpha \rfloor +1$)-population $X_0^{(2)}$. 
					Two individuals of the population $X_0^{(2)}$ 
					give birth to an ($\lfloor \alpha \rfloor +2$)-mutant. These mutants generate the 
					($\lfloor \alpha \rfloor +2$)-populations $X_1^{(2)}$ and $X_2^{(2)}$, respectively. And so on with the notations previously 
					introduced.}}
			\label{fig_arbres}
		\end{figure}
		
		{Recall the definition of the process $R_{\lfloor \alpha \rfloor}$ in \eqref{defRi}. 
			Then the stopping time $T^{(i)}$ which is the birth time of the $i$th ($\lfloor \alpha \rfloor+1$)-mutant produced by an 
			$\lfloor \alpha \rfloor$-individual 
			can be expressed as
			\begin{equation}\label{defTi} T^{(i)}:= \inf \{t \geq 0, R_{\lfloor \alpha \rfloor}(t) \geq i\}. \end{equation}}
		In particular, from \eqref{coupling1}, we get, for every $t \leq T_\eps^K \wedge S_\eps^K$,
		\begin{equation} \label{coupling2} { R_{\lfloor \alpha \rfloor}^{(-)}(t) 
				\leq R_{\lfloor \alpha \rfloor}(t) \leq R_{\lfloor \alpha \rfloor}^{(+)}(t)}, \quad a.s. ,\end{equation}
		where {processes $R_{\lfloor \alpha \rfloor}^{(\pm)}$ have been defined in \eqref{defRi}.
			Let us first, for the sake of simplicity, replace the processes $R_{\lfloor \alpha \rfloor}^{(\pm)}$ by the 
			processes $\bar{R}_{\lfloor \alpha \rfloor}^{(\pm)}$, defined in \eqref{defbarRi}, and introduce
			$$
			T^{(i,\pm)}:= \inf \{t \geq 0, \bar{R}_{\lfloor \alpha \rfloor}^{(\pm)}(t) \geq i\}.
			$$
			We will prove later on that this does not modify the result.}

		Let $u_K$ be a sequence such that 
		$$ u_K \gg \ln K \quad \text{and} \quad \mu^{\lfloor \alpha \rfloor+1}Ku_K \underset{K \to \infty}{\to} 0.  
		$$
		Using {Markov Inequality and Lemma \ref{lemme_bounds_ni}, we get
			\begin{align}\nonumber
				\P\left(T^{(1,-)}\leq u_K\right)
				& \leq \P \left( R^{(-)}\left(u_K\right)\geq 1 \right)\leq 
				\mu b_{\lfloor \alpha \rfloor} u_K\bar{x}^{(+)}_{\lfloor \alpha \rfloor}= (1+\eps)^{\lfloor \alpha \rfloor}
				\frac{b_0... b_{{\lfloor \alpha \rfloor}} x_0^{(+)}}{s_{10}^{(+)}...s_{{\lfloor \alpha \rfloor}0}^{(+)}}
				\mu^{\lfloor \alpha \rfloor+1}Ku_K \to 0, \quad (K \to \infty).
		\end{align}}

		Following the ideas developed in Section \ref{alpha+1geqM}, we may couple each $k$-mutant 
		population $(\lfloor \alpha \rfloor +1\leq k \leq L-1)$ with two 
		birth and death processes, independent conditionally on their birth time. We will not detail the couplings 
		as the ideas have already been developed and 
		the notations are tedious, but we nevertheless state rigorously the resulting properties. 
		Let us denote by $(T_\mathfrak{j}^{(i)}, \mathfrak{j} \in {\cup_{n\in\N}\N^n}, i \in \N)$ the time of appearance of the populations
		$(X_\mathfrak{j}^{(i)}, \mathfrak{j} \in {\cup_{n\in\N}\N^n}, i \in \N)$. For all  $\mathfrak{j} \in {\cup_{n\in\N}\N^n}, i \in \N$, 
		$ 
		T_\mathfrak{j}^{(i)}:= \inf \{ t \geq 0, X_\mathfrak{j}^{(i)}(t) \geq 1 \}$.
		Then we  introduce birth and death processes 
		$(X_\mathfrak{j}^{(i,*)}, \mathfrak{j} \in {\cup_{n\in\N}\N^n}, i \in \N, * \in \{-,+\})$
		with  birth and death rates 
		$$
		\left(\left(b_{\mathfrak{t}(\mathfrak{j})},
		(1-\mu)b_{\mathfrak{t}(\mathfrak{j})}+\sigma_{\mathfrak{t}(\mathfrak{j})}^{(*)}\right), \mathfrak{j} \in 
		\cup_{n\in\N}\N^n, i \in \N, * \in \{-,+\}\right),
		$$
		where {the $\sigma^{(*)}$'s} have been defined in \eqref{defsigma-},
		$$
		\mathfrak{t}(\mathfrak{j}):=\lfloor \alpha \rfloor + |\mathfrak{j}|+1,
		$$
		and $|\mathfrak{j}|$ is the number of terms in $\mathfrak{j}$ (for instance $|11221|=5$).
		
		These processes can be constructed in such a way that, for all $ \mathfrak{j} \in {\cup_{n\in\N}\N^n}, i \in \N$,
		\begin{equation} \label{couplage_arbres} 
			X_\mathfrak{j}^{(i,-)}(t)\leq X_\mathfrak{j}^{(i)}(t)+\mathfrak{N}_\mathfrak{j}^{(i)}(t) \leq X_\mathfrak{j}^{(i,+)}(t),
			\quad t \leq T_\eps^K \wedge S_\eps^K,\end{equation}
		where $\mathfrak{N}_\mathfrak{j}^{(i)}(t)$ is the number of mutants of type ($\lfloor \alpha \rfloor + |\mathfrak{j}|+2$) produced by the 
		$X_\mathfrak{j}^{(i)}$ population (which is of type ($\lfloor \alpha \rfloor + |\mathfrak{j}|+1$)) until time $t$.
		Recall that among the offsprings produced by the population $X_\mathfrak{j}^{(i)}$, a fraction $(1-\mu)$ is constituted by newborn individuals of type
		$\lfloor \alpha \rfloor + |\mathfrak{j}|+1$, and a fraction $\mu$ by new born individuals of type $\lfloor \alpha \rfloor + |\mathfrak{j}|+2$, and that 
		at each birth event the probability to have a mutation is independent from the past.
		
		Moreover, conditionally on $(T_\mathfrak{j}^{(i)}, \mathfrak{j} \in {\cup_{n\in\N}\N^n}, i \in \N)$, the pairs
		of processes 
		$((X_\mathfrak{j}^{(i,-)},X_\mathfrak{j}^{(i,+)}), \mathfrak{j} \in {\cup_{n\in\N}\N^n}, i \in \N)$
		are independent.
		This last point allows us to approximate the law of the random trees 
		$$\mathcal{T}^{(i)}:= X_0^{(i)} \cup_{n \in \N} X^{(i)}_{\N^n}, \quad i \in \N $$
		(an example is depicted in Figure \ref{fig_arbres}) by the same law, and independently.
		Indeed from Equation \eqref{couplage_arbres}, we get that $(\mathcal{T}^{(i)})_{i \in \N}$, can be coupled with two inhomogeneous birth and death processes, 
		whose law is well known and easy to study. This will be the object of the end of the proof.\\
		
		We will now consider each tree $\mathcal{T}^{(i)}$ ($i \in \N$) with root $X_0^{(i)}$ independently, and approximate its probability to end with a $L$-mutant
		sub-population. For simplicity we write $|0|=0$.
		
		Consider a vertex $X_\mathfrak{j}^{(i)}$, $\mathfrak{j} \in \{0\}\cup \N^\N$ of the tree $\mathcal{T}^{(i)}$.
		Due to the coupling \eqref{couplage_arbres} we are able to approximate the probability for this vertex to have no child, one child or 
		more than {one child.}
		{Before} the time $T_\eps^K \wedge S_\eps^K$, the law of the number of offsprings produced by the 
		$X_\mathfrak{j}^{(i)}$ population is smaller (resp. larger) than the number of offspring produced by a population initiated by one individual, with 
		individual birth rate $b_{\mathfrak{t}(\mathfrak{j})}$ and individual death rate $(1-\mu)b_{\mathfrak{t}(\mathfrak{j})}+\sigma^{(+)}_{\mathfrak{t}(\mathfrak{j})}$
		(resp. $(1-\mu)b_{\mathfrak{t}(\mathfrak{j})}+\sigma^{(-)}_{\mathfrak{t}(\mathfrak{j})}$). Moreover, each offspring is a mutant of type 
		($\mathfrak{t}(\mathfrak{j})+1$) with probability $\mu$, and is a clone with probability $(1-\mu)$. Hence
		$$
		\P\left( X_\mathfrak{j}^{(i)} \text{-pop produces $1$ mutant} \right) \leq \sum_{n=0}^\infty p^{(b_{\mathfrak{t}(\mathfrak{j})},(1-\mu)b_{\mathfrak{t}(\mathfrak{j})}+\sigma^{(+)}_{\mathfrak{t}(\mathfrak{j})})}(n)
		n \mu (1-\mu)^{n-1} \leq \mu \eee^{(b_{\mathfrak{t}(\mathfrak{j})},b_{\mathfrak{t}(\mathfrak{j})}+\sigma^{(+)}_{\mathfrak{t}(\mathfrak{j})})},
		$$
		where $p^{(.,.)}$ and $\eee^{(.,.)}$ are defined in Lemma \ref{lem_taille_excu}. 
		Similarly, for $K$ large enough,
		\begin{align}
			\P\left( X_\mathfrak{j}^{(i)} \text{-pop produces $1$ mutant} \right) &\geq \sum_{n=0}^\infty p^{(b_{\mathfrak{t}(\mathfrak{j})},(1-\mu)b_{\mathfrak{t}(\mathfrak{j})}+\sigma^{(-)}_{\mathfrak{t}(\mathfrak{j})})}(n)
			n \mu (1-\mu)^{n-1} \\\nonumber
			&\geq \mu \sum_{n=0}^{\mu^{-1/2}+1} p^{(b_{\mathfrak{t}(\mathfrak{j})},(1-\mu)b_{\mathfrak{t}
					(\mathfrak{j})}+\sigma^{(-)}_{\mathfrak{t}(\mathfrak{j})})}(n)
			n (1-\mu)^{\mu^{-1/2}}\\\nonumber
			&\geq \mu \sum_{n=0}^{\mu^{-1/2}+1} p^{(b_{\mathfrak{t}(\mathfrak{j})},(1-\mu)b_{\mathfrak{t}
					(\mathfrak{j})}+\sigma^{(-)}_{\mathfrak{t}(\mathfrak{j})})}(n)
			n (1-2\sqrt{\mu})\\\nonumber
			&= \mu(1-2\sqrt{\mu})\left( \eee^{(b_{\mathfrak{t}(\mathfrak{j})},(1-\mu)b_{\mathfrak{t}(\mathfrak{j})}+
				\sigma^{(-)}_{\mathfrak{t}(\mathfrak{j})})}
			-  \E\left[\mathbf{1}_{\{|X_\mathfrak{j}^{(i)}|\geq \mu^{-1/2}+1\}}|X_\mathfrak{j}^{(i)}|\right]\right),
		\end{align}
		where $|X_\mathfrak{j}^{(i)}|$ denotes the number of offsprings (mutants or clones) produced by the population $X_\mathfrak{j}^{(i)}$.
		But Cauchy-Schwarz and Markov inequalities yield
		$$
		\E^2\left[\mathbf{1}_{\{|X_\mathfrak{j}^{(i)}|\geq \mu^{-1/2}+1\}}|X_\mathfrak{j}^{(i)}|\right]\leq 
		\P\left(|X_\mathfrak{j}^{(i)}|\geq \mu^{-1/2}+1\right)\E\left[|X_\mathfrak{j}^{(i)}|^2\right]
		\leq \sqrt{\mu}\E\left[|X_\mathfrak{j}^{(i)}|\right]\E\left[|X_\mathfrak{j}^{(i)}|^2\right]= O(\sqrt{\mu}),
		$$
		as the two expectations are finite according to \eqref{loi_excursion}.
		
		Adding \eqref{diff_e}, we may conclude that, as $K$ goes to infinity,
		$$
		\P\left( X_\mathfrak{j}^{(i)} \text{-pop produces $1$ mutant} \right)= 
		\eee^{(b_{\mathfrak{t}(\mathfrak{j})},b_{\mathfrak{t}(\mathfrak{j})}+|f_{\mathfrak{t}(\mathfrak{j})0}|)}\mu(1 + O(\eps)). 
		$$
		Using again coupling \eqref{couplage_arbres} and \eqref{loi_excursion}, we get  that
		$$
		\P\left( X_\mathfrak{j}^{(i)} \text{-pop produces at least $2$ mutants} \right) 
		\leq \sum_{n=0}^\infty p^{(b_{\mathfrak{t}(\mathfrak{j})},b_{\mathfrak{t}(\mathfrak{j})}+\sigma^{(+)}_{\mathfrak{t}(\mathfrak{j})})}(n)
		\frac{n(n-1)}{2} \mu^2=O(\mu^2).
		$$
		
		From the last computations, we can infer that, for $i \geq 1$, the probability for the tree $\mathcal{T}^{(i)}$
		to produce an $L$-mutant is, for large $K$,
		\begin{equation} \label{star}
			\mu^{L-1-\lfloor \alpha \rfloor} \left(\prod_{k = \lfloor \alpha \rfloor +1}^{L-1}\eee^{(b_k,b_k+{|f_{k0}|})}\right)(1+O(\eps)).  
		\end{equation}
		Indeed, the probability for each vertex to produce one child is of order $\mu$, and the probability to produce at least two children is of order $\mu^2$. 
		Since there is only  a finite number of possible mutations, independent of $\mu$, this implies that the probability for the tree $\mathcal{T}^{(i)}$ 
		to have at least one vertex with two children and end with an $L$ individual is of order
		$ \mu^{L-\lfloor \alpha \rfloor} $,
		which is negligible compared to 
		$ \mu^{L-1-\lfloor \alpha \rfloor}$.
		Moreover, we know that each $L$-mutant has a probability close to $f_{L0}/b_L$ to generate a
		population whose size hits the 
		value $\eps K$, and once this 
		size is reached, the time needed for the $L$-population to outcompete the other populations and hit its equilibrium size is of 
		order $\ln K$ {(see for instance \cite{champagnat2006microscopic})}, which is negligible 
		with respect to the time needed for the successful $L$-individual to be born.
		{If the times of appearance of the trees $\mathcal{T}^{(i)}$ had the law of a Poisson 
			process with inhomogeneous parameter close to $\mu b_{\lfloor \alpha \rfloor}x_{\lfloor \alpha \rfloor}K$
			(that is to say if we could consider $\bar{R}^{(\pm)}_{\lfloor \alpha \rfloor}$ instead of 
			$R^{(\pm)}_{\lfloor \alpha \rfloor}$), this would end the proof of the first point of Theorem \ref{pro_mupetit}.
			We now need to justify that the result stays true when considering $R^{(\pm)}_{\lfloor \alpha \rfloor}$.}
		{To achieve this goal, it is enough to prove the existence of two sequences $N_1(K)$ and $N_2(K)$ satisfying
			\begin{equation} \label{FCgrands} N_1(K) \gg \left(K\mu^L\right)^{-1} \quad \text{and} \quad N_2(K) \ll \left(\mu^{L-1-\lfloor \alpha \rfloor}\right)^{-1} \end{equation}
			such that 
			\begin{equation} \label{pareil} \lim_{K \to \infty} 
				\P \left( \sup_{s \leq N_1(K)} \left| R^{(\pm)}_{\lfloor \alpha \rfloor}(s)-\bar{R}^{(\pm)}_{\lfloor \alpha \rfloor}(s) \right| > N_2(K) \right)=0. 
			\end{equation}
			Indeed, this implies that during the time interval under consideration (of order $(K\mu^L)^{-1}$),
			the difference between the number of 'trees' generated by the processes $ R^{(\pm)}_{\lfloor \alpha \rfloor}$ and  
			$\bar{R}^{(\pm)}_{\lfloor \alpha \rfloor}$ is much smaller than $(\mu^{L-1-\lfloor \alpha \rfloor})^{-1}$, and as 
			each tree has a probability of order $(\mu^{L-1-\lfloor \alpha \rfloor})$ to generate a successful mutant, the same tree 
			is at the origin of the successful mutant under the two counting processes under consideration with a probability close to $1$.}
		
		{To prove \eqref{pareil}, we  apply Doob's martingale inequality to $M_{\lfloor \alpha \rfloor}$. This yields:
			\begin{align*}
				\P \left( \sup_{s \leq N_1(K)} \left| R^{(\pm)}_{\lfloor \alpha \rfloor}(s)-\bar{R}^{(\pm)}_{\lfloor \alpha \rfloor}(s) \right| 
				> N_2(K) \right)
				& \leq \frac{\E \left[ \left(M_{\lfloor \alpha \rfloor}^{(\pm)}(N_1(K))\right)^2 \right]}{N_2^2(K)}
				\leq \frac{2 \mu b_{\lfloor \alpha \rfloor} \bar{x}^{(\pm)}_{\lfloor \alpha \rfloor} N_1(K)}{N_2^2(K)}\\
				&\leq C K\mu^{\lfloor \alpha \rfloor+1}  \frac{N_1(K)}{N_2^2(K)}
				= C\frac{K\mu^L}{\left(\mu^{L-1-\lfloor \alpha \rfloor}\right)^{2}} \frac{N_1(K) }{N_2^2(K)}\mu^{L-1-\lfloor \alpha \rfloor},
			\end{align*}
			where $C$ is a finite constant. As $\mu^{L-1-\lfloor \alpha \rfloor}$ goes to $0$ when $K$ tends to $\infty$, the sequences $N_1(K)$ and $N_2(K)$ can be chosen 
			in such a way that the last term in the previous series of inequalities goes to $0$ when $K$ tends to $\infty$, which ends the proof of \eqref{pareil}.}\\
		
		To end the proof of Theorem \ref{pro_mupetit} let us consider the case when $\mu \ll 1/K$.
		From Lemma \ref{Th3cChamp} we know that, for $\eps$ small enough, there exists a positive $V$ such that with high probability, the size of a monomorphic $0$-population 
		stays at a distance smaller than $\eps K$ from its equilibrium size $\bar{n}_0 K$ during a time larger than $\eee^{KV}$. 
		As a consequence, if $K\mu \gg \eee^{-VK}$, the $0$-population produces a large number of $1$-mutants during the time interval $[0,\eee^{VK}]$, with a rate very close to 
		$b_0 \bar{n}_0 K \mu $.
		Hence the proof is very similar to the previous proof, where the 
		$\lfloor \alpha \rfloor$-population is replaced by the $0$-population.
		
		\section{Proofs of Section \ref{res_ext}}
		
		\subsection{Proof of Theorem {\ref{theo_range} point 2}}
		
		Recall from \eqref{defN} that the process $X_0$ admits the following Poisson representation:
		\begin{equation} \label{defX0} 
			X_0(t)={\lfloor\bar{x}_0 K\rfloor}+ \int_0^t\int_{\R_+}  \mathbf{1}_{\theta \leq (1- \mu)b_{0}  X_0(s^-)} 
			Q_0^{(b)}(ds,d\theta)
			- \int_0^t\int_{\R_+}  \mathbf{1}_{\theta  \leq D^K_0(X(s^-))X_0(s^-)}
			Q_0^{(d)}(ds,d\theta),
		\end{equation}
		where $ D^K_0(X)$ {is} defined in \eqref{deathrate}.
		Thus, if we introduce the process $Y_0$ via
		$$
		Y_0(t)={\lfloor \bar{x}_0 K \rfloor} + \int_0^t\int_{\R_+}  \mathbf{1}_{\theta \leq b_{0}  Y_0(s^-)} 
		Q_0^{(b)}(ds,d\theta)
		- \int_0^t\int_{\R_+}  \mathbf{1}_{\theta  \leq (d_0+c_{00}Y_0(s^-)/K)Y_0(s^-)}
		Q_0^{(d)}(ds,d\theta),
		$$
		we get that, almost surely, 
		$ X_0(t) \leq Y_0(t),$ for all $ t \geq 0$.
		Now consider a time $v_K$ such that
		$$
		\frac{1}{\rho_0(K)} \ll v_K \ll \frac{1}{K \mu^L}, \quad K \to \infty.
		$$
		where $\rho_0(K)$ was defined in \eqref{defCpara}.
		If we apply inequality (3.7) of \cite{chazottes2016sharp} to the process $Y_0$, we get:
		$$
		d_{\text{TV}}\left(\P(Y_0(v_K) \in .), \delta_0(.)\right) \underset{K \to \infty}{\to} 0,  
		$$
		where $d_{\text{TV}}$ is the total variation distance. This implies
		\begin{equation} \label{mortX0} \P(X_0(v_K)>0)\underset{K \to \infty}{\to} 0. \end{equation}
		Hence to prove Theorem {\ref{theo_range} point 2} it is enough to show that 
		$ \P(B_L < v_K)\underset{K \to \infty}{\to} 0$.
		Notice that from \eqref{defX0} we have, for every positive $t$,
		$$
		\frac{d}{dt}\E \left[X_0(t)\right] \leq \E \left[ \left( b_0-d_0-\frac{c_{00}}{K}X_0(t) \right)X_0(t) \right] 
		\leq (b_0-d_0) \E[X_0(t)] - \frac{c_{00}}{K} \E^2[X_0(t)].
		$$
		Thus, for all $t \geq 0$, we have
		$\E[X_0(t)] \leq \bar{x}_0 K$.
		Next we 
		bound the expectation of the total number $\Xi_1$ of type $1$ individuals generated by type $0$ individuals by mutations before the time $v_K$:
		\begin{equation} \label{majEXii} 
			\E[\Xi_1] \leq \int_0^{v_K} b_0 \mu \E[{X_0(s)}]ds \leq b_0 \bar{x}_0 K \mu v_K. 
		\end{equation}
		We want to bound the probability that at least one type $1$ individual born from a type $0$ individual before time $v_K$ has a line of descent containing 
		a type $L$ individual. Denote by $\xi_i$ the event that  the $i$th type $1$ individual born from a type $0$ individual before time $v_K$ has a descendant of type $L$ 
		at any time in the future.
		We see that 
		$$
		\P(B_L < v_K) = \P\left( \underset{{i \leq {\Xi_1}}}{\cup} \xi_i \right) =  \E \left[\P \left( \underset{{i \leq {\Xi_1}}}{\cup} \xi_i \Big|
		{\Xi_1} \right) \right]
		\leq  \E \left[ \underset{{i \leq {\Xi_1}}}{\sum}  \P \left( \xi_i \Big| {\Xi_1} \right) \right].
		$$
		But recall that by Assumption \ref{A4}, for $1 \leq i \leq L-1$, $b_i<d_i$. Hence using \eqref{star}, we see that the probability of the events 
		$(\xi_i)_{1 \leq i \leq {\Xi_1}}$ can be bounded independently of ${\Xi_1}$ by 
		$$
		2\left(\underset{1 \leq i \leq L-1}{\prod}\eee^{(b_i,d_i)}\right)\mu^{L-1}. 
		$$
		This yields 
		$$
		\P(B_L < v_K) \leq b_0 \bar{x}_0  v_K \left(\underset{1 \leq i \leq L-1}{\prod}\eee^{(b_i,d_i)}\right)\mu^{L}\underset{K \to \infty}{\to} 0. 
		$$
		Adding \eqref{mortX0} ends the proof.

		\subsection{Proof of Theorem \ref{theo_range} {point 1}}
		
		{We} introduce $v_K$ such that 
		$ \frac{1}{\rho_0(K)} \ll v_K \ll \frac{1}{K \mu}$.
		Then \eqref{majEXii} and Markov Inequality ensure that with a probability close to $1$, no type $1$ mutant is produced before the population extinction.
		As a consequence, no type $L$ mutant is produced. This ends the proof.
		
		\renewcommand\thesection{\Alph{section}}
		\setcounter{section}{0}
		\renewcommand{\theequation}{\Alph{section}.\arabic{equation}}

		\section{Technical results}
		
		The next Lemma quantifies the time spent by a birth and death process with logistic competition in a vicinity of its 
		equilibrium size. It is stated in \cite{champagnat2006microscopic} Theorem 3(c).
		
		\begin{lemma}\label{Th3cChamp}
			Let $b,d,c$ be in $\R_+^*$ such that $b-d>0$.
			Denote by $(W_t)_{t \geq 0}$ a density dependent birth and death process with birth rate $bn$ and death rate
			$(d+c n/K)n$, where $n \in \N_0$ is the current state of the process and $K \in \N$ is the carrying capacity.
			Fix $0 < \eta_1 < (b - d)/c$ and $\eta_2 > 0$, and introduce the stopping time
			$$
			\mathcal{S}_K = \inf \left\{t \geq  0: W_t \notin
			\left[\Big(\frac{b - d}{c}- \eta_1\Big)K,  \Big( \frac{b - d}{c}+\eta_2\Big)K\right]\right\}.
			$$
			Then, there exists $V > 0$ such that, for any compact subset $C$ of $](b - d)/c -
			\eta_1 , (b - d)/c + \eta_2 [$,
			\begin{equation}\label{temps_expo}\lim_{K \to \infty}
				\underset{k/K \in C}{\sup} \P_k(\mathcal{S}_K< \eee^{KV} ) = 0.\end{equation}
		\end{lemma}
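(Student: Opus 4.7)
I plan to treat this as a Kramers-type lower bound on the exit time from a potential well for a one-dimensional density-dependent Markov chain. The scaled process $W_t/K$ is approximated on bounded time intervals by the logistic ODE $\dot x = (b-d-cx)x$ (Ethier--Kurtz, \cite{EthKur1986} Ch.~11, Thm.~2.1), whose unique positive equilibrium $\bar x := (b-d)/c$ is a global attractor on $(0,\infty)$. I would combine this ODE approximation with the explicit gambler's ruin formula for birth-death chains in order to quantify the exponential cost of exiting the well, and then iterate via the strong Markov property.

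\textbf{Reduction.} Pick $\delta>0$ small enough that $[\bar x-\delta,\bar x+\delta]\subset(\bar x-\eta_1,\bar x+\eta_2)$. The Ethier--Kurtz approximation furnishes a deterministic time $T_0=T_0(C,\delta)$ such that, uniformly over initial points $k$ with $k/K\in C$, the chain lies in $[\lfloor(\bar x-\delta)K\rfloor,\lceil(\bar x+\delta)K\rceil]$ at time $T_0$ with probability tending to one. By the strong Markov property it is then enough to establish the exponential lower bound on $\mathcal S_K$ starting from $n_0:=\lfloor\bar xK\rfloor$.

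\textbf{Gambler's ruin and Laplace asymptotics.} Set $\lambda_n=bn$, $\mu_n=(d+cn/K)n$, and $r_j:=\mu_j/\lambda_j=(d+cj/K)/b=\exp(F(j/K))$ with $F(x):=\log((d+cx)/b)$; note $F(\bar x)=0$ and $F'(\bar x)=c/b>0$. With $a_K:=\lfloor(\bar x-\eta_1)K\rfloor$ and $b_K:=\lceil(\bar x+\eta_2)K\rceil$, the classical hitting-probability formula for one-dimensional birth-death chains gives
\begin{equation*}
	\P_{n_0+1}(T_{b_K}<T_{n_0})=\Bigl(\sum_{k=n_0}^{b_K-1}\prod_{j=n_0+1}^{k}r_j\Bigr)^{-1},\qquad \P_{n_0-1}(T_{a_K}<T_{n_0})=\Bigl(\sum_{k=a_K}^{n_0-1}\prod_{j=k+1}^{n_0-1}r_j^{-1}\Bigr)^{-1}.
\end{equation*}
Define the quasi-potential $\Psi(x):=\int_{\bar x}^{x}F(y)\,dy$, which vanishes quadratically at $\bar x$ and is strictly positive elsewhere; in particular both $\Psi(\bar x-\eta_1)$ and $\Psi(\bar x+\eta_2)$ are positive. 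A Riemann-sum comparison yields, uniformly in $k$ in the relevant ranges,
\begin{equation*}
	\sum_{j=n_0+1}^{k}F(j/K)=K\,\Psi(k/K)+o(K),\qquad -\sum_{j=k+1}^{n_0-1}F(j/K)=K\,\Psi(k/K)+o(K).
\end{equation*}
Bounding each denominator in the gambler's ruin formulas below by its (exponentially dominant) boundary term and summing over the two directions of exit gives
\begin{equation*}
	\P_{n_0}(\mathcal S_K<\tau_{n_0}^{+})\leq \exp(-K V_1)
\end{equation*}
for every $V_1<\min(\Psi(\bar x-\eta_1),\Psi(\bar x+\eta_2))$ and all $K$ large enough, where $\tau_{n_0}^{+}$ is the first return time to $n_0$.

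\textbf{Iterated excursions and main obstacle.} By the strong Markov property the number of excursions from $n_0$ before $\mathcal S_K$ stochastically dominates a geometric random variable of parameter $\exp(-KV_1)$, so it is larger than $\exp(KV_1/2)$ with probability tending to one. The successive excursion durations are i.i.d.\ with a positive mean (bounded below by the mean waiting time for the first jump from $n_0$, which has rate $O(K)$, but more importantly by a universal positive constant for the excursions that do reach $n_0\pm\lfloor K^{1/2}\rfloor$), so a law of large numbers yields $\mathcal S_K\geq \exp(KV)$ with high probability for any $V<V_1/2$; composed with the reduction step, this gives the claim uniformly in the starting point. The main obstacle is the third step: making the Riemann-sum comparison for $K\Psi(k/K)$ uniform all the way to the boundaries of $[a_K,b_K]$, and verifying cleanly that the boundary term does dominate the geometric-type sum. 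Both are routine once $\Psi$ has been identified as the natural quasi-potential and one exploits the strict convexity of $F$ at $\bar x$.
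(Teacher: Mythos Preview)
The paper does not actually prove this lemma: it is quoted verbatim as Theorem~3(c) of Champagnat~\cite{champagnat2006microscopic} and used as a black box. Champagnat's original proof goes through a comparison with the large-deviation asymptotics for perturbed dynamical systems (Freidlin--Wentzell type estimates), so there is no ``paper's proof'' to compare with beyond that citation.

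Your route is genuinely different and more elementary: you exploit the one-dimensional structure directly via the explicit gambler's-ruin formula, identify the quasi-potential $\Psi(x)=\int_{\bar x}^x\log((d+cy)/b)\,dy$ by a Riemann-sum argument, and then iterate excursions from $n_0=\lfloor\bar xK\rfloor$. This bypasses the general large-deviation machinery at the cost of being specific to birth--death chains; for the present lemma that is a net gain in transparency. The reduction step (Ethier--Kurtz to reach the $\delta$-neighbourhood of $\bar x$ in bounded time without exiting, since the logistic ODE is monotone towards $\bar x$) and the geometric-excursion count are both sound. The lower bound on $\mathcal S_K$ via the sum of first holding times at $n_0$ is crude (each is $\Theta(1/K)$) but sufficient, since $\exp(KV_1/2)/K\geq\exp(KV)$ for any $V<V_1/2$ and large $K$.

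One slip: you write ``strict convexity of $F$ at $\bar x$'', but $F(x)=\log((d+cx)/b)$ is concave. What you need, and what you actually use, is that $F$ is strictly \emph{increasing} with $F(\bar x)=0$; equivalently, that $\Psi''=F'>0$, so $\Psi$ is strictly convex with its unique minimum (equal to zero) at $\bar x$. This is what guarantees $\Psi(\bar x-\eta_1),\Psi(\bar x+\eta_2)>0$ and makes the boundary term dominate the gambler's-ruin denominator. With that wording fixed, your sketch is correct.
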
 
		
		Let us now recall some results on hitting times of a birth and death process. The first, third, and last statements can be found in \cite{BP17}.
		The second statement is a consequence of the first statement.
		\begin{lemma} \label{resultatsBP}
			Let $Z=(Z_t)_{t \geq 0}$ be a birth and death process with individual birth and death rates $b$ and $d $. For $i \in \Z_+$, 
			$T_i=\inf\{ t\geq 0, Z_t=i \}$ and $\P_i$ (resp. $\E_i$) is the law (resp. expectation) of $Z$ when $Z_0=i$. Then 
			\begin{enumerate} 
				\item[$\bullet$] If $d\neq b \in \R_+^*$, for every $i\in \Z_+$ and $t \geq 0$,
				\begin{equation} \label{ext_times} \P_{i}(T_0\leq t )= \Big( \frac{d(1-\eee^{(d-b)t})}{b-d\eee^{(d-b)t}} \Big)^{i}.\end{equation}
				\item[$\bullet$] If $0<b<d$ and $Z_0=N$, the following convergence holds:
				\begin{equation} \label{equi_hitting0}  T_0/\log N \underset{N \to \infty}{\to} (d-b)^{-1}, \quad  \text{in probability}.  \end{equation}
				\item[$\bullet$] If $0<d<b$, on the non-extinction event of $Z$, which has a probability $1-(d/b)^{Z_0}$, the following convergence holds:
				\begin{equation} \label{equi_hitting}  T_N/\log N \underset{N \to \infty}{\to} (b-d)^{-1}, \quad  a.s.  \end{equation}
				\item[$\bullet$] If $0<b<d$, {and $(i,j,k) \in \N^3$ such that $j \in (i,k)$,
					\begin{equation}\label{atteindre}
						\P_j(T_k<T_i)= \frac{(d/b)^{j-i}-1}{(d/b)^{k-i}-1}.
				\end{equation}}
				
			\end{enumerate}
		\end{lemma}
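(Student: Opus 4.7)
The four assertions split naturally into two groups. Parts (1) and (2) rest on the branching (additivity) property of the linear birth-death process: starting from $i$ individuals, $Z$ decomposes as a sum of $i$ independent copies started from $1$, so $\P_i(T_0\leq t) = q(t)^i$ with $q(t) := \P_1(T_0 \leq t)$, and $T_0$ under $\P_N$ is distributed as $\max_{j=1}^N T_0^{(j)}$ for i.i.d.\ extinction times. Parts (3) and (4) are standard analyses of the supercritical regime and of the embedded jump chain, respectively.

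For (1), I would compute $q$ via a first-event decomposition (condition on whether the first jump of the single ancestor, occurring at an $\mathrm{Exp}(b+d)$ time, is a birth or a death, noting that after a birth extinction requires independent extinction of both lines). Equivalently, Kolmogorov's backward equation gives the Riccati ODE
\[
q'(t) = d - (b+d) q(t) + b q(t)^2 = (1 - q(t))(d - b q(t)), \qquad q(0) = 0,
\]
which separates as $\frac{1}{d-b}\log\frac{d - bq}{d(1-q)} = t$ and inverts to the stated formula. For (2), a direct algebraic manipulation of the formula of (1) (valid for $b<d$) yields the sharp tail
\[
\P(T_0^{(1)} > t) = \frac{d-b}{d\,\eee^{(d-b)t} - b} \sim \frac{d-b}{d}\,\eee^{-(d-b)t}, \qquad t \to \infty.
\]
Setting $t_N = (\log N + u)/(d-b)$ yields $N\,\P(T_0^{(1)} > t_N) \to \frac{d-b}{d}\eee^{-u}$, so that
\[
\P_N(T_0 \leq t_N) = (1 - \P(T_0^{(1)} > t_N))^N \longrightarrow \exp\bigl(-\tfrac{d-b}{d}\eee^{-u}\bigr),
\]
a Gumbel limit, from which $T_0/\log N \to 1/(d-b)$ in probability is immediate.

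For (3), I would invoke the classical a.s.\ convergence $W_t := \eee^{-(b-d)t} Z_t \to W$, where $W$ is nonnegative with $\{W>0\} = \{\text{non-extinction}\}$ a.s.\ and $\P_{Z_0}(\text{non-extinction}) = 1-(d/b)^{Z_0}$ (the latter being what (1) gives as $t\to\infty$). On $\{W>0\}$ we have $\log Z_t = (b-d)t + \log W + o(1)$ a.s., so evaluating at $t = T_N$ and dividing by $\log N$ gives $T_N/\log N \to 1/(b-d)$ almost surely. For (4), I would pass to the embedded jump chain, which is a nearest-neighbour random walk with up/down probabilities $b/(b+d)$ and $d/(b+d)$; the event $\{T_k < T_i\}$ depends only on this chain, and the gambler's ruin formula with harmonic function $h(n) = (d/b)^n$ delivers exactly the stated ratio. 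The main technical subtlety is part (2): one must argue that the distributional Gumbel convergence of $T_0 - (\log N)/(d-b)$ implies convergence in probability of $T_0/\log N$ to the constant $1/(d-b)$, which follows from the general fact that convergence in distribution to a constant is equivalent to convergence in probability, combined with the tightness supplied by the Gumbel limit.
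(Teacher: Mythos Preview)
Your proposal is correct; each of the four arguments is standard and sound. The paper itself does not prove this lemma: it simply refers to \cite{BP17} for the first, third, and fourth bullets and remarks that the second is a consequence of the first. Your derivation of (2) from (1) via the exact tail $\P_1(T_0>t)=(d-b)/(d\eee^{(d-b)t}-b)$ and the resulting Gumbel limit for $\max_{j\le N}T_0^{(j)}$ is therefore precisely in line with what the paper asserts, only made explicit; your treatments of (1), (3), and (4) supply self-contained proofs where the paper is content to cite.
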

		
		The last result of this Appendix concerns the size distribution of the total number of individuals in a sub-critical birth and death process. 
		We refer the reader to \cite{van2016random} {(Theorem 3.13 applied to the case when $X$ is a geometric 
			random variable with parameter $d/(b+d)$)} or \cite{BP17} for the proof of the two first points. The last one is just a consequence of the Mean Value Theorem.
		
		\begin{lemma} \label{lem_taille_excu}
			Let us consider a birth and death process with individual birth rate $b>0$ and individual death rate $d>0$ satisfying $b<d$.
			Let $Z$ denote the total number of births during an excursion of this process initiated with one individual.
			Then, for $k \geq 0$,
			\begin{equation}\label{loi_excursion}
				p^{(b,d)}(k):=\P(Z=k)=\frac{(2k)!}{k! (k+1)!}\left( \frac{b}{d + b} \right)^{k} \left( \frac{d}{d + b} \right)^{k+1}.
			\end{equation}
			In particular, 
			\begin{equation}\label{esp_excursion}
				\eee^{(b,d)}:=\E[Z]= \sum_{{k=1}}^\infty \frac{(2k)!}{{(k-1)!}(k+1)!}\left( \frac{b}{d + b} \right)^{k} \left( \frac{d}{d + b} \right)^{k+1}.
			\end{equation}
			Moreover, there exist two positive constants $c$ and $\eps_0$ such that, for every $\eps \leq \eps_0$, 
			if $0<d_i<b_i$ and $|b_i-d_i|\leq \eps$, $i \in \{1,2\}$, then
			\begin{equation}\label{diff_e}
				\left| \eee^{(b_1,d_1)}-\eee^{(b_2,d_2)} \right|\leq c\eps.
			\end{equation}
			
		\end{lemma}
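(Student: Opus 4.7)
The plan is to establish the three claims in order. For the probability mass function \eqref{loi_excursion}, I would pass to the embedded jump chain of the continuous-time process: each jump is independently a birth with probability $p := b/(b+d)$ or a death with probability $q := d/(b+d)$. An excursion starting with one individual and producing exactly $k$ births consists of $k$ up-steps and $k+1$ down-steps, the final one being the death that absorbs the process at $0$. Any such jump sequence has unconditional probability $p^{k} q^{k+1}$, and after subtracting $1$ from the running height the first $2k$ steps form a Dyck path of length $2k$ (a walk from $0$ to $0$ with $k$ up and $k$ down steps staying non-negative); the number of such walks is the Catalan number $C_k = \frac{1}{k+1}\binom{2k}{k} = \frac{(2k)!}{k!\,(k+1)!}$, which yields \eqref{loi_excursion}.

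The expectation \eqref{esp_excursion} then follows immediately by multiplying \eqref{loi_excursion} by $k$ and using $k\,C_k = (2k)!/((k-1)!\,(k+1)!)$. It is useful to record the closed form: by a first-step decomposition, writing $\phi(s) := \E[s^Z]$, one obtains $\phi(s) = q + p s\,\phi(s)^2$ (with probability $q$ the founder dies immediately contributing $Z=0$; otherwise, after the first birth, the two surviving individuals generate two independent copies of the same excursion). Differentiating and evaluating at $s=1$ in the subcritical regime $b<d$ gives $\eee^{(b,d)} = \phi'(1) = b/(d-b)$.

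For the Lipschitz bound \eqref{diff_e}, I would leverage this closed form. The map $(b,d) \mapsto b/(d-b)$ is $C^{\infty}$ on the open subcritical region $\{0 < b < d\}$, with gradient uniformly bounded on any compact subset bounded away from the diagonal $\{b=d\}$ (the parameters appearing in the applications lie in such a set: the rates $b_i$, $d_i + c_{i0}\bar{x}_0$ of Theorem \ref{pro_mupetit} are separated by a fixed positive fitness gap). The Mean Value Theorem then delivers a Lipschitz constant $c$ on such a compact region, and \eqref{diff_e} follows whenever $\epsilon_0$ is small enough that the line segment between $(b_1,d_1)$ and $(b_2,d_2)$ stays inside the controlled region.

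The only part requiring more than a direct computation is the combinatorial enumeration in the first step; however the Dyck-path count is entirely classical (cycle lemma / reflection principle) and is precisely the argument in the cited references \cite{van2016random,BP17}, so it presents no real difficulty. The remainder is algebraic manipulation and a one-line application of the Mean Value Theorem.
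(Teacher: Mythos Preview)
Your proposal is correct and follows essentially the same approach as the paper, which simply cites \cite{van2016random,BP17} for \eqref{loi_excursion}--\eqref{esp_excursion} and invokes the Mean Value Theorem for \eqref{diff_e}. Your derivation of the closed form $\eee^{(b,d)}=b/(d-b)$ via the first-step decomposition is a useful addition, since it makes the smoothness needed for the Mean Value Theorem immediate rather than having to differentiate the series term by term.
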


		\begin{figure}[h!]
			\centering
			\includegraphics[width=.8\textwidth]{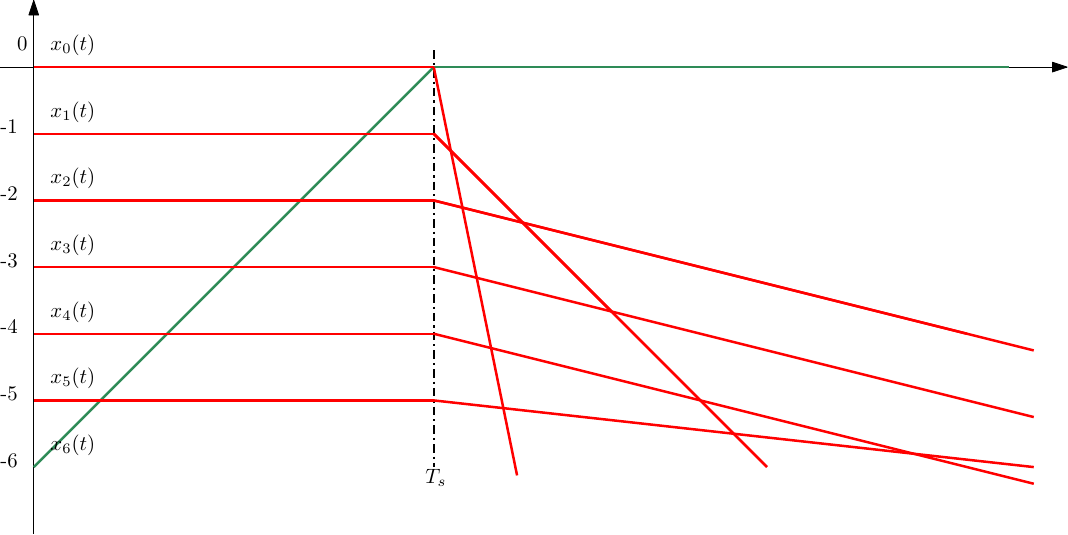}
			\caption
			{Graph of $x(t)$ in the 1-sided case $m_{ij}=m^{(1)}_{ij}$ for $L=6$ and $f_{60}=1, (f_{06},f_{16},f_{26},f_{36},f_{46},f_{56})=(-5, -1,-0.25,-1.5,-2,-0.05)$, which is the fitness landscape depicted in Figure \ref{fitness}. }
			\label{fig-pow-1sided}
		\end{figure}

		\begin{figure}[h!]
			\centering
			\includegraphics[width=.8\textwidth]{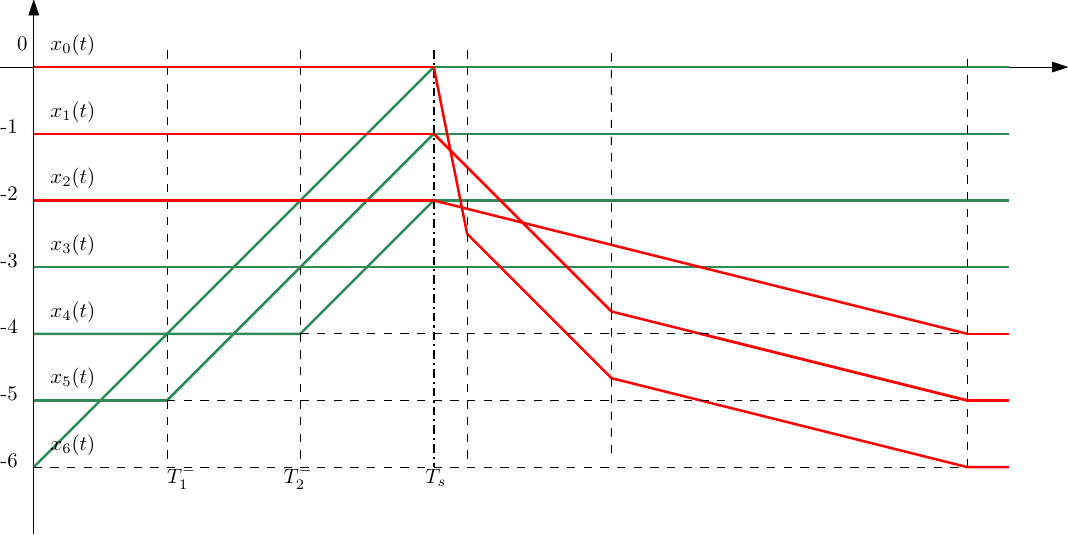}
			\caption
			{Graph of $x(t)$ in the 2-sided case $m_{ij}=m^{(2)}_{ij}$ for $L=6$ and $f_{60}=1, (f_{06},f_{16},f_{26})=(-5, -1,-0.25)$, which is (compatible with) the fitness landscape depicted in Figure \ref{fitness}. }
			\label{fig-pow-general}
		\end{figure}

		%
		%
		%
		
		
		\bibliographystyle{abbrv}
		\bibliography{Library-Biomaths}

\begin{thebibliography}{10}

\bibitem{abu2017double}
D.~Abu~Awad and S.~Billiard.
\newblock The double edged sword: The demographic consequences of the evolution
  of self-fertilization.
\newblock {\em Evolution}, 71(5):1178--1190, 2017.

\bibitem{alexander2013conditional}
H.~Alexander.
\newblock Conditional distributions and waiting times in multitype branching
  processes.
\newblock {\em Adv. Appl. Probab.}, 45(3):692--718, 2013.

\bibitem{B14}
M.~Baar, A.~Bovier, and N.~Champagnat.
\newblock {From stochastic, individual-based models to the canonical equation
  of adaptive dynamics in one step}.
\newblock {\em Ann. Appl. Probab.}, 27(2):1093--1170, 2017.

\bibitem{baar2016stochastic}
M.~Baar, L.~Coquille, H.~Mayer, M.~H{\"{o}}lzel, M.~Rogava, T.~T{\"{u}}ting,
  and A.~Bovier.
\newblock {A stochastic model for immunotherapy of cancer}.
\newblock {\em Scientific Reports}, 6(1):24169, 2016.

\bibitem{billiard2017interplay}
S.~Billiard and C.~Smadi.
\newblock The interplay of two mutations in a population of varying size: a
  stochastic eco-evolutionary model for clonal interference.
\newblock {\em Stochastic Process. Appl.}, 127(3):701--748, 2017.

\bibitem{BolPac1}
B.~Bolker and S.~W. Pacala.
\newblock Using moment equations to understand stochastically driven spatial
  pattern formation in ecological systems.
\newblock {\em Theor. Popul. Biol.}, 52(3):179--197, 1997.

\bibitem{BolPac2}
B.~M. Bolker and S.~W. Pacala.
\newblock {Spatial moment equations for plant competition: understanding
  spatial strategies and the advantages of short dispersal}.
\newblock {\em Am. Nat.}, 153(6):575--602, 1999.

\bibitem{bovier2017recovery}
A.~Bovier, L.~Coquille, and R.~Neukirch.
\newblock The recovery of a recessive allele in a mendelian diploid model.
\newblock {\em J. Math. Biol.}, https://doi.org/10.1007/s00285-018-1240-z,
  2018.

\bibitem{BH15}
A.~Bovier and F.~den Hollander.
\newblock {\em Metastability: A potential-theoretic approach}, volume 351 of
  {\em Grundlehren der Mathematischen Wissenschaften [Fundamental Principles of
  Mathematical Sciences]}.
\newblock Springer, Cham, 2015.

\bibitem{BovWan2013}
A.~Bovier and S.-D. Wang.
\newblock Trait substitution trees on two time scales analysis.
\newblock {\em Markov. Proc. Rel. Fields}, 19(4):607--642, 2013.

\bibitem{rebekka2015genealogies}
R.~Brink-Spalink and C.~Smadi.
\newblock Genealogies of two linked neutral loci after a selective sweep in a
  large population of stochastically varying size.
\newblock {\em Adv. Appl. Probab.}, 49(1):279--326, 2017.

\bibitem{BP17}
T.~Britton and E.~Pardoux.
\newblock {Stochastic epidemics in a homogeneous community}.
\newblock {\em in preparation}, 2017.

\bibitem{carter2002evolution}
A.~J. Carter and G.~P. Wagner.
\newblock Evolution of functionally conserved enhancers can be accelerated in
  large populations: a population--genetic model.
\newblock {\em Proc. Royal Soc. London B: Biol. Sciences}, 269(1494):953--960,
  2002.

\bibitem{champagnat2006microscopic}
N.~Champagnat.
\newblock {A microscopic interpretation for adaptive dynamics trait
  substitution sequence models}.
\newblock {\em Stochastic Process. Appl.}, 116(8):1127--1160, 2006.

\bibitem{C_CEAD}
N.~Champagnat, R.~Ferri{\`{e}}re, and G.~{Ben Arous}.
\newblock The canonical equation of adaptive dynamics: A mathematical view.
\newblock {\em Selection}, 2:73--83., 2001.

\bibitem{C_ME}
N.~Champagnat, R.~Ferri{\`{e}}re, and S.~M{\'{e}}l{\'{e}}ard.
\newblock {From individual stochastic processes to macroscopic models in
  adaptive evolution}.
\newblock {\em Stochastic Models}, 24(suppl. 1):2--44, 2008.

\bibitem{CM11}
N.~Champagnat and S.~M{\'{e}}l{\'{e}}ard.
\newblock {Polymorphic evolution sequence and evolutionary branching}.
\newblock {\em Probab. Theor. Rel. Fields}, 151(1-2):45--94, 2011.

\bibitem{chazottes2016sharp}
J.-R. Chazottes, P.~Collet, and S.~M{\'e}l{\'e}ard.
\newblock Sharp asymptotics for the quasi-stationary distribution of
  birth-and-death processes.
\newblock {\em Probab. Theor. Rel. Fields}, 164(1-2):285--332, 2016.

\bibitem{chazottes2017time}
J.-R. Chazottes, P.~Collet, and S.~M{\'e}l{\'e}ard.
\newblock On time scales and quasi-stationary distributions for multitype
  birth-and-death processes.
\newblock {\em arXiv preprint arXiv:1702.05369}, 2017.

\bibitem{Coron2017}
C.~Coron, M.~Costa, H.~Leman, and C.~Smadi.
\newblock A stochastic model for speciation by mating preferences.
\newblock {\em J. Math. Biol.}, 76(6):1421--1463, 2018.

\bibitem{coron2013quantifying}
C.~Coron, S.~M{\'e}l{\'e}ard, E.~Porcher, and A.~Robert.
\newblock Quantifying the mutational meltdown in diploid populations.
\newblock {\em Am. Nat.}, 181(5):623--636, 2013.

\bibitem{cowperthwaite2006bad}
M.~C. Cowperthwaite, J.~Bull, and L.~A. Meyers.
\newblock From bad to good: Fitness reversals and the ascent of deleterious
  mutations.
\newblock {\em PLoS Comp. Biol.}, 2(10):e141, 2006.

\bibitem{DePristo2007}
M.~A. DePristo, D.~L. Hartl, and D.~M. Weinreich.
\newblock Mutational reversions during adaptive protein evolution.
\newblock {\em Molec. Biol. Evol.}, 24(8):1608--1610, 2007.

\bibitem{DieLaw}
Dieckmann and Law.
\newblock {Moment approximations of individual-based models}.
\newblock In U.~Dieckmann, R.~Law, and J.~A.~J. Metz, editors, {\em The
  geometry of ecological interactions: simplifying spatial complexity}, pages
  252--270. Cambridge University Press, 2000.

\bibitem{DL96}
U.~Dieckmann and R.~Law.
\newblock {The dynamical theory of coevolution: a derivation from stochastic
  ecological processes}.
\newblock {\em J. Math. Biol.}, 34:579--612., 1996.

\bibitem{DurMay2011}
R.~Durrett and J.~Mayberry.
\newblock {Traveling waves of selective sweeps}.
\newblock {\em Ann. Appl. Probab}, 21(2):699--744, 2011.

\bibitem{EthKur1986}
S.~N. Ethier and T.~G. Kurtz.
\newblock {\em Markov processes}.
\newblock Wiley Series in Probability and Mathematical Statistics: Probability
  and Mathematical Statistics. John Wiley \& Sons, Inc., New York, 1986.

\bibitem{FM04}
N.~Fournier and S.~M{\'{e}}l{\'{e}}ard.
\newblock {A microscopic probabilistic description of a locally regulated
  population and macroscopic approximations}.
\newblock {\em Ann. Appl. Probab.}, 14(4):1880--1919, 2004.

\bibitem{geritz1997dynamics}
S.~A. Geritz, J.~A. Metz, {\'E}.~Kisdi, and G.~Mesz{\'e}na.
\newblock Dynamics of adaptation and evolutionary branching.
\newblock {\em Physical Review Letters}, 78(10):2024, 1997.

\bibitem{giachetti1988altered}
C.~Giachetti and J.~J. Holland.
\newblock Altered replicase specificity is responsible for resistance to
  defective interfering particle interference of an sdi-mutant of vesicular
  stomatitis virus.
\newblock {\em J. Virol.}, 62(10):3614--3621, 1988.

\bibitem{gillespie1984molecular}
J.~H. Gillespie.
\newblock Molecular evolution over the mutational landscape.
\newblock {\em Evolution}, 38(5):1116--1129, 1984.

\bibitem{Gokhale2009}
C.~S. Gokhale, Y.~Iwasa, M.~A. Nowak, and A.~Traulsen.
\newblock {The pace of evolution across fitness valleys}.
\newblock {\em J. Theor. Biol.}, 259(3):613--620, 2009.

\bibitem{haeno2013stochastic}
H.~Haeno, Y.~E. Maruvka, Y.~Iwasa, and F.~Michor.
\newblock Stochastic tunneling of two mutations in a population of cancer
  cells.
\newblock {\em PloS one}, 8(6):e65724, 2013.

\bibitem{Iwasa2004a}
Y.~Iwasa, F.~Michor, and M.~A. Nowak.
\newblock {Evolutionary dynamics of invasion and escape}.
\newblock {\em J. Theor. Biol.}, 226(2):205--214, 2004.

\bibitem{Iwasa2004}
Y.~Iwasa, F.~Michor, and M.~A. Nowak.
\newblock {Stochastic tunnels in evolutionary dynamics.}
\newblock {\em Genetics}, 166(3):1571--9, 2004.

\bibitem{leman2016convergence}
H.~Leman.
\newblock Convergence of an infinite dimensional stochastic process to a
  spatially structured trait substitution sequence.
\newblock {\em Stoch. Partial Differ. Equ. Anal. Comput.}, 4(4):791--826, 2016.

\bibitem{lenski2003evolutionary}
R.~E. Lenski, C.~Ofria, R.~T. Pennock, and C.~Adami.
\newblock The evolutionary origin of complex features.
\newblock {\em Nature}, 423(6936):139--144, 2003.

\bibitem{maisnier2002compensatory}
S.~Maisnier-Patin, O.~G. Berg, L.~Liljas, and D.~I. Andersson.
\newblock Compensatory adaptation to the deleterious effect of antibiotic
  resistance in salmonella typhimurium.
\newblock {\em Mol. Microbiol.}, 46(2):355--366, 2002.

\bibitem{metz1995adaptive}
J.~A. Metz, S.~A. Geritz, G.~Mesz{\'e}na, F.~J. Jacobs, and J.~S.
  Van~Heerwaarden.
\newblock {\em Adaptive dynamics: a geometrical study of the consequences of
  nearly faithful reproduction}.
\newblock WP-95-099, 1995.

\bibitem{moore1994simulation}
F.~B.-G. Moore and S.~J. Tonsor.
\newblock A simulation of {Wright's} shifting-balance process: migration and
  the three phases.
\newblock {\em Evolution}, 48(1):69--80, 1994.

\bibitem{o1984vesicular}
P.~J. O'Hara, S.~T. Nichol, F.~M. Horodyski, and J.~J. Holland.
\newblock Vesicular stomatitis virus defective interfering particles can
  contain extensive genomic sequence rearrangements and base substitutions.
\newblock {\em Cell}, 36(4):915--924, 1984.

\bibitem{sagitov2009multitype}
S.~Sagitov and M.~C. Serra.
\newblock {Multitype Bienaym{\'e}--Galton--Watson processes escaping
  extinction}.
\newblock {\em Adv. Appl. Probab.}, 41(1):225--246, 2009.

\bibitem{schrag1997adaptation}
S.~J. Schrag, V.~Perrot, and B.~R. Levin.
\newblock Adaptation to the fitness costs of antibiotic resistance in
  escherichia coli.
\newblock {\em Proc. Royal Soc. London B: Biological Sciences},
  264(1386):1287--1291, 1997.

\bibitem{serra2006waiting}
M.~C. Serra.
\newblock On the waiting time to escape.
\newblock {\em J. Appl. Probab.}, 43(1):296--302, 2006.

\bibitem{Serra2007}
M.~C. Serra and P.~Haccou.
\newblock {Dynamics of escape mutants}.
\newblock {\em Theor. Pop. Biol.}, 72(1):167--178, 2007.

\bibitem{Smadi2017}
C.~Smadi.
\newblock The effect of recurrent mutations on genetic diversity in a large
  population of varying size.
\newblock {\em Act. Appl. Math.}, 149(1):11--51, 2017.

\bibitem{tran_2008}
V.~C. Tran.
\newblock Large population limit and time behaviour of a stochastic particle
  model describing an age-structured population.
\newblock {\em ESAIM: Probab. Statist.}, 12:345–386, 2008.

\bibitem{van2016random}
R.~van~der Hofstad.
\newblock {\em Random graphs and complex networks}, volume~43 of {\em Cambridge
  Series in Statistical and Probabilistic Mathematics}.
\newblock Cambridge University Press, 2016.

\bibitem{wade1991wright}
M.~J. Wade and C.~J. Goodnight.
\newblock Wright's shifting balance theory: an experimental study.
\newblock {\em Science}, 253(5023):1015, 1991.

\bibitem{Weinreich2005}
D.~M. Weinreich and L.~Chao.
\newblock {Rapid evolutionary escape by large populations from local fitness
  peaks is likely in nature.}
\newblock {\em Evolution}, 59(6):1175--82, 2005.

\bibitem{Weissman2009}
D.~B. Weissman, M.~M. Desai, D.~S. Fisher, and M.~W. Feldman.
\newblock {The rate at which asexual populations cross fitness valleys}.
\newblock {\em Theor. Pop. Biol.}, 75(4):286--300, 2009.

\bibitem{wright1965factor}
S.~Wright.
\newblock Factor interaction and linkage in evolution.
\newblock {\em Proc. Royal Soc. London B: Biological Sciences},
  162(986):80--104, 1965.

\end{thebibliography}

\end{document}